\numberwithin{equation}{section}
\newtheorem{theorem}{Theorem}[section] 
\newtheorem{lemma}[theorem]{Lemma}
\newtheorem{remark}[theorem]{Remark}
\newtheorem{proposition}[theorem]{Proposition}
\newcommand\norm[1]{\left\lVert#1\right\rVert}
\newcommand{\tu}{\textup}
\newcommand{\bfa}[1]{\boldsymbol{#1}} 			%
\newcommand{\e}{\epsilon}
\newcommand{\Div}{\text{Div}}     				%
\newcommand{\ddiv}{\text{div}}     				%
\definecolor{black}{rgb}{0,0,0}
\definecolor{red}{rgb}{1,0,0}
\definecolor{blue}{rgb}{0,0,1}
\newcommand{\di}[1]{{\color{blue}{#1}}}
\newcommand{\tm}[1]{{\color{red}{#1}}}
\numberwithin{equation}{section}
\renewcommand{\div}{\mathop{\rm div}\nolimits}
\newcommand{\dx}{ \mathrm{d}x}
\newcommand{\dt}{ \mathrm{d}t}
\newcommand{\da}{ \mathrm{d}a}
\newcommand{\beq}{\begin{equation}}
\newcommand{\eeq}{\end{equation}}
\newcommand{\beqq}{\begin{equation*}}
\newcommand{\eeqq}{\end{equation*}}
\newcommand{\beqas}{\begin{eqnarray*}}
\newcommand{\eeqas}{\end{eqnarray*}}
\newcommand{\bsp}{\begin{split}}
\newcommand{\eesp}{\end{split}}
\date{\today}
\title[G\MakeLowercase{eneralized multiscale finite element method for a nonlinear elastic strain-limiting} C\MakeLowercase{osserat model}]
{\textsf{\LARGE G\MakeLowercase{eneralized multiscale finite element method for a nonlinear elastic strain-limiting} C\MakeLowercase{osserat model}}}
\author[D\MakeLowercase{mitry} A\MakeLowercase{mmosov}, T\MakeLowercase{ina} M\MakeLowercase{ai}, J\MakeLowercase{uan} G\MakeLowercase{alvis}]{D\MakeLowercase{mitry} A\MakeLowercase{mmosov$^{a}$}, T\MakeLowercase{ina} M\MakeLowercase{ai$^{b,c,*}$}, J\MakeLowercase{uan} G\MakeLowercase{alvis$^{d}$}}
\begin{document}

%


\maketitle

\begin{abstract}
    For nonlinear Cosserat elasticity, we consider multiscale methods in this paper.  In particular, we explore the generalized multiscale finite element method (GMsFEM) to solve an isotropic Cosserat problem with strain-limiting property (ensuring bounded linearized strains even under high stresses).  Such strain-limiting Cosserat model can find potential applications in solids and biological fibers.   
    However, Cosserat media with naturally rotational degrees of freedom, nonlinear constitutive relations, high contrast, and heterogeneities may produce challenging multiscale characteristics in the solution, and upscaling by multiscale methods is necessary.  
    Therefore, we utilize the offline and residual-based online (adaptive or uniform) GMsFEM in this context while handling the nonlinearity by Picard iteration.
    Through various two-dimensional experiments (for perforated, composite, and stochastically heterogeneous media with small and big strain-limiting parameters), our numerical results show the approaches' convergence, efficiency, and robustness.  In addition, these results demonstrate that such approaches provide good accuracy, the online GMsFEM gives more accurate solutions than the offline one, and the online adaptive strategy has similar
accuracy to the uniform one but with fewer degrees of freedom.  
\end{abstract}

\noindent \textbf{Keywords.}  
Generalized multiscale finite element method; Adaptivity; Residual-based online multiscale basis functions; Strain-limiting; Nonlinear Cosserat elasticity; Heterogeneous media
%
%


\noindent \textbf{Mathematics Subject Classification.} 65N30, 65N99

\vfill
\noindent Contact:



\noindent 
\textit{Dmitry Ammosov}; $^a$Laboratory of Computational Technologies for Modeling Multiphysical and Multiscale Permafrost Processes, North-Eastern Federal University, Yakutsk, Republic of Sakha (Yakutia), Russia, 677980; \texttt{dmitryammosov@gmail.com}\\

\noindent
$^*$Corresponding author: \textit{Tina Mai}; $^b$Institute of Research and Development, Duy Tan University, Da Nang, 550000, Vietnam; $^c$Faculty of Natural Sciences, Duy Tan University, Da Nang, 550000, Vietnam; \texttt{maitina@duytan.edu.vn}\\

\noindent
\textit{Juan Galvis}; $^d$Departamento de Matem\'aticas, Universidad Nacional de Colombia, Carrera 45 No. 26-85, Edificio Uriel Guti\'errez, Bogot\'a D.C., Colombia;\\ \noindent \texttt{jcgalvisa@unal.edu.co}\\



\newpage


\section{Introduction}

The present paper considers a nonlinear strain-limiting Cosserat elasticity model in heterogeneous media. This problem is inspired by the striking progress of investigating nonlinear responses of materials based on the recently developed strain-limiting theory \cite{KRR-MMS2011a}. For special Cosserat rods, \cite{raj23cr1d} is the first work that examines a particular set of strain-limiting constitutive relations.  Our paper extends this modeling of Cosserat rods to two-dimensional (2D) Cosserat media in the strain-limiting setting.  The Cosserat elasticity enables us to describe materials with rotating internal degrees of freedom; and in the strain-limiting theory, the linearized strains of these materials stay in a certain range, even for very high stresses.
Some potential applications that come to mind are for modeling solids and biological fibers using uniform, hyperelastic, nonlinear strain-limiting Cosserat materials \cite{raj23cr1d}. 

Regarding the  strain-limiting phenomenon, it is a special subclass of the implicit constitutive theory, which provides a useful foundation for developing nonlinear, infinitesimal strain theories for elastic-like (non-dissipative) material behavior, as stated by Rajagopal in \cite{Raji03,KRR-ZAMP2007,KRR-MMS2011a,KRR-MMS2011b}. It is significant to note that this situation differs from classical Cauchy and Green elasticity modeling methodologies, which provide conventional linear models when faced with infinitesimal strains. Furthermore, the implicit constitutive theory offers a robust theoretical framework for simulating fluid and solid mechanics in a variety of applications, including engineering, chemistry, and physics.  Here, we focus in particular on a unique subclass of the implicit constitutive theory in solids called the strain-limiting theory \cite{KRR-MMS2011a}, which guarantees that the linearized strain will remain confined even in the presence of extremely high stress (while in the conventional linear model, the stress increases when the strain increases and vice versa).  Because of this, the strain-limiting theory is useful for explaining the behavior of fracture, brittle materials close to crack tips or notches (such as crystals), or intense loads on the material's boundary or within its body. No matter how small the gradient of the displacement (and hence, infinitesimal strain) is, both scenarios result in a concentration of stress. These materials are capable of withstanding infinite stresses and are resistant to breaking due to the boundedness of strains, according to \cite{KRR-MMS2011a}.

Even though it is well-known that several processes have been described by classical elasticity, more intricate models are required in many situations. For materials with rotational internal degrees of freedom, their mechanical behaviors have frequently been described in terms of Cosserat media (outside of the strain-limiting framework). Granular media,  liquid crystals, and cellular solids are a few examples of such materials \cite{Lee1973,besdo1985inelastic, ONCK2002717,li2016mixed}. They all contribute significantly to various branches of research and engineering \cite{LAGERWALL20121387, de1999granular, gibson_2003}.
However, nonlinear Cosserat media often contain heterogeneities, as a source of theoretical and computational challenges, which affect the performance of discretizations and solvers for numerical solutions. Additionally, in our context, heterogeneities may present in the strain-limiting parameters. 

To avoid costly, in-depth numerical simulations for Cosserat media, upscaling them has been looked into by many studies \cite{ONCK2002717, kouznetsova2004multi, CHANG20053773, LI2010291, LI2011362}, including earlier works on homogenization with regard to scale separation such as spatial periodicity\cite{forest1999estimating, yer2001direct}.
In these earlier homogenization works, the homogenization procedure depends on the hierarchy of characteristic lengths such as the heterogeneity (or unit cell) size $l_h\,,$ the internal Cosserat length $l_{i}\,,$ and the structure (global domain) size $L_d\,.$ One can obtain different homogenized media based on the ratios of these characteristic values. That is, one gets an elastic Cauchy medium in the case $l_h$
is comparable to $l_{i}$, and they are both much more minor than $L_d$. Whereas, if $l_i$ is comparable to $L_d\,,$ 
one obtains a Cosserat elastic medium. In the literature, all this analysis was carried out for the linear case of Cosserat elasticity.

Now, since our paper deals with no scale separation and avoids formal homogenization procedure, we develop efficient numerical approaches for solving the heterogeneous nonlinear Cosserat problem based on the generalized multiscale finite element method (GMsFEM) \cite{G1}.
The GMsFEM is currently a notable approach for numerically upscaling partial differential equations with high-contrast multiscale physical parameters, especially for heterogeneous media \cite{yeb2}.
This methodology allows us to overcome the limitations of analytical homogenization, such as the need for scale separation (for instance, spatial periodicity) and consideration of characteristic length relations (for example, Cosserat elasticity). The GMsFEM has been successfully used in various applications, including linear heat transfer, fluid flow in porous media, linear and nonlinear classical elasticity, linear Cosserat elasticity, parabolic and hyperbolic equations, and variational inequalities, among others \cite{vasilyeva2019multiscale, gle, gne, cosserat2022, mcl,rtt21, Spiridonov2019, contreras2023exponential, gao2015generalized}. 
The primary goal of the GMsFEM is to create coarse-scale multiscale basis functions by constructing local snapshot spaces (consisting of some local solutions) then carrying out local spectral decomposition (via analysis) within these snapshot spaces.  Therefore, through the coarse-grid multiscale basis functions, the generated eigenfunctions can convey the local features to the global ones.  After that, solution approximations are achieved by solving a coarse-scale matrix system in the multiscale space (formed by the multiscale basis functions), using the Galerkin method or some other projection operators.

Applying the GMsFEM, the nonlinear strain-limiting elasticity problem was handled in \cite{gne}, then the linear Cosserat problem was studied in \cite{cosserat2022}. Our paper will extend \cite{cosserat2022} to nonlinear isotropic heterogeneous Cosserat media \cite{isohe}, in the strain-limiting setting \cite{KRR-MMS2011a, A-Mai-Walton, B-Mai-Walton, gne, cemnlporo}, through the concepts of Picard iteration \cite{gne} combining with GMsFEM. In this way, the GMsFEM can tackle the difficulties from multiple scales, heterogeneities, and high contrast presented in the media.  In particular, during every Picard iteration, either the standard offline GMsFEM or the residual-based online adaptive GMsFEM will be used.  In the former approach (offline GMsFEM), we build multiscale basis functions 
and then use them
to solve the problem on a coarse grid. 
Regarding terminology, we generally refer to online spaces when the multiscale basis functions rely on online data, like the parameter or right-hand side \cite{yeb2}.
The latter method (online adaptive GMsFEM) is based on the residual-driven online GMsFEM \cite{chungres}, where we examine the construction of residual-based online multiscale basis functions in connection with adaptivity \cite{chungres,gne}, 
 meaning the insertion of online basis functions into a few chosen areas.  Here, ``adaptive'' implies either ``adaptive'' or ``uniform'' (see Section \ref{online} for more details).
 Being essential to lower the cost of online multiscale basis calculations, adaptivity is a critical step in obtaining an effective local multiscale model reduction.  That is, an adaptive algorithm permits adding more basis functions in neighborhoods having increased complexity without the need for a priori information. 
 
 Our plan is to update online basis functions after a certain number of Picard iterations if the relative change of the coefficient is greater than a specified fixed tolerance.  When the desired error is obtained, the updating process stops, and the new basis functions remain unchanged for the next Picard iterations until they must be computed again.  

We investigate 2D models in various heterogeneous media (perforated, composite, and stochastically heterogeneous) to test the proposed multiscale approaches with small and large strain-limiting parameters. To compute finite element reference solutions, standard FEM piecewise linear basis functions are employed on a computational fine grid. We consider different numbers of (offline and online) multiscale basis functions per coarse node together with different update frequencies of online basis functions. Numerical results show that the proposed multiscale approaches can significantly reduce the number of degrees of freedom (basis functions) while maintaining high accuracy. Provided enough number of initial basis functions in the offline space, the use of online (adaptive or uniform) basis functions improves the accuracy of the multiscale solution.  Moreover, adaptively adding online basis functions substantially lowers the GMsFEM's computing cost, speeds up convergence, and minimizes error. Also, the online adaptive strategy achieves comparable accuracy to the uniform strategy with fewer degrees of freedom, thus lowering the computing
cost of the GMsFEM.

This paper has the following structure. Section \ref{pre} introduces preliminaries and notations. In Section \ref{formulate}, we develop a nonlinear strain-limiting Cosserat elasticity model. Section \ref{ffem} outlines the finite element approximation of the problem on a fine grid and Picard iterative process for linearization. Section \ref{approx} describes our offline multiscale approach following the GMsFEM. In Section \ref{online}, we present the residual-based online adaptive (meaning adaptive or uniform) GMsFEM. Section \ref{num_results} shows the numerical results. In Section \ref{conclusion}, the work is summarized. 


\section{Preliminaries}\label{pre}
Consider a nonlinear Cosserat elastic medium $\Omega\,,$ which is also an open, convex, bounded, Lipschitz, simply connected computational domain of $\mathbb{R}^d\,.$  While we concentrate on the case $d=2$ in this study, the approach is easily extended to $d=3\,.$  For the preliminary work, we refer the readers to \cite{C-G-K, gne}. The symbol $\nabla$ represents a spatial gradient. Latin indices vary in the set $\{1,2,3\}$.  Italic capitals (e.g., $f$) are used to represent functions; bold letters (e.g., $\bfa{v}$ and $\bfa{T}$) are employed to denote vector fields, matrix fields, third- and fourth-order tensors over $\Omega\,.$ The space of functions, vector fields, and matrix fields
 over $\Omega$ are represented, respectively, by italic capitals (e.g., $L^2(\Omega)$), 
boldface Roman capitals (e.g., $\bfa{V}$), 
and special Roman capitals (e.g., $\mathbb{S}$).  To denote a space of symmetric matrix fields, we utilize subscript $s$ added to a specific Roman capital.  

The points $\bfa{x}$ have coordinates indicated by $(x_1,x_2) \in \Omega \subset \mathbb{R}^2\,.$  The symbol for the Kronecker is $\delta_{ij}\,.$ In a Cartesian system, an orthonormal basis is denoted by the vectors $(\bfa{e}_i)_{i=1,2,3}$.  In the tensor index notation for partial derivatives, the subscript $,j$ stands for differentiation with regard to $x_j\,.$
A second-order mixed partial derivative is defined as
\begin{equation}\label{mixed_der}
\frac{\partial^2 f}{\partial y \partial x} = \frac{\partial}{\partial y} \left( \frac{\partial f}{\partial x} \right) = (f'_x)'_y = f''_{xy} = \partial_{yx} f = \partial_y \partial_x f\,.
\end{equation}
Any repeated index adheres to the convention of Einstein summation.

The gradient ($\nabla$) of a scalar-valued function and a vector field are respectively defined as follows \cite{div1}:
\begin{equation}\label{graddiv}
\nabla a = a_{,i} \bfa{e}_i\,, \quad \nabla \bfa{a} = a_{i,j} \bfa{e}_i \otimes \bfa{e}_j\,.
\end{equation}
The divergence ($\div$ or $\nabla \cdot$) of vector field and second-order tensor-valued function are
\begin{equation}\label{graddiv2}
 \div(\bfa{a}) = \nabla \cdot \bfa{a} = a_{i,i}\,, \quad \nabla \cdot \bfa{A} = a_{ji,j} \bfa{e}_i\,,
\end{equation}
respectively. 
%

We make the following remark on the divergence of the second-order tensor.  In a Cartesian coordinate system, using notation from \eqref{graddiv}, the following relations for a second-order tensor field $\bfa{S}$ hold \cite{div1}:
\begin{equation}\label{divdef}
	\nabla \cdot \bfa{S} = \frac{\partial S_{ji}}{\partial x_j}  \bfa{e}_i = S_{ji,j}  \bfa{e}_i\,.  
\end{equation}
The result is a contravariant (column) vector, 
and note that
\[\nabla \cdot \bfa{S} \neq  \nabla \cdot \bfa{S}^{\tu{T}} = \div(\bfa{S})\,,\]
where
\begin{equation}\label{div2row}
	\nabla \cdot \bfa{S}^{\tu{T}} = \div(\bfa{S})=\frac{\partial S_{ij}}{\partial x_j}  \bfa{e}_i = S_{ij,j}  \bfa{e}_i\,.
\end{equation}
Only for a symmetric second-order tensor $S_{ij} = S_{ji}\,,$ it occurs that $\div(\bfa{S}) = \nabla \cdot \bfa{S}$ as the divergence \cite{div2}. 
Therefore, in mechanical equations where tensor symmetry is assumed, the two definitions \eqref{divdef}--\eqref{div2row} (and corresponding symbols $\div$ and $\nabla \cdot$) are both valid.  Nevertheless, when tensor symmetry is not assumed, the definition \eqref{div2row} is not consistent with curvilinear expression.

 Let $\bfa{\e} = (\e_{ijk})$
be the third-order antisymmetric Levi-Civita (permutation) tensor.  Here, $\e_{ijk}$ is the sign of the permutation $(i,j,k)\,,$ that is,
when the sequence of indices $(i,j,k)$ 
is an even permutation of the sequence $(1,2,3)$ then $\e_{ijk}=1$; whereas for an odd permutation, $\e_{ijk}=-1$; if any two indices coincide then $\e_{ijk} = 0\,.$  For example, 
\[\e_{123}=\e_{231}=\e_{312}=-\e_{213}= -\e_{132}=-\e_{321}=1\,, \quad \e_{232}=-\e_{232}=0\,.\]



We consider the classical Lebesgue and Sobolev spaces. In particular, the spaces $V: = H^1(\Omega)$ and $\bfa{V}: = \bfa{H}^1(\Omega) = \bfa{W}^{1,2}(\Omega)$ are used. Also, we denote $V_0: = H^1_0(\Omega)$ and $\bfa{V}_0: = \bfa{H}_0^1(\Omega) = \bfa{W}_0^{1,2}(\Omega)\,.$  The dual norm to $\| \cdot \|_{\bfa{H}_0^1(\Omega)}$ is $\| \cdot \|_{\bfa{H}^{-1}(\Omega)}$.  The component representation of vector $\bfa{v} \in \bfa{V}$ is $(v_1, v_2, v_3)$ or $[v_i]$ for $i=1,2,3\,.$  Let $| \bfa{v}|$ denote the Euclidean norm of the 3-component vector-valued function 
$ \bfa{v}$; and $| \nabla \bfa{v}|$ represents the Frobenius norm of the $3 \times 3$ matrix $\nabla \bfa{v}\,,$ where $|\bfa{A}| = \sqrt{\bfa{A} : \bfa{A}}\,,$ with the Frobenius inner product defined as $\langle \bfa{A},\bfa{B}\rangle_{\tu{F}} = \bfa{A} : \bfa{B} = \tu{Tr}(\bfa{A}^{\tu{T}}\bfa{B}) = \sum_{i,j} a_{ij} b_{ij}\,.$

\section{A nonlinear strain-limiting Cosserat model}\label{formulate}

The Cosserat theory of elasticity \cite{polish1, polish2} is also known as micropolar elasticity, or the micropolar theory of elasticity, or micropolar continuum mechanics. 
Its main equations hold for all points of the body $\Omega$ and are equilibrium equations, kinematic relations, and constitutive equations.   An important feature of this paper is that we consider Cosserat's theory of elasticity 
within the strain-limiting nonlinear elasticity setting \cite{KRR-MMS2011a, A-Mai-Walton, B-Mai-Walton, gne, cemnlporo}. 
In this section, we first review the general Cosserat model and then introduce the strain-limiting Cosserat problem in dimension two. The strain-limiting elasticity setting can be found in \cite{gne}.

\subsection{General elastic Cosserat model}

Let $\bfa{u}=(u_1, u_2, u_3)$ be the displacement vector and $\bfa{\phi} = (\phi_1, \phi_2, \phi_3)$ be the rotation vector.
We use the symbols $\bfa{\tau} = (\tau_{ij})$ for the force stress tensor and $\bfa{m}= (m_{ij})$ for the couple stress (or moment stress) tensor. 
The fact that these stress tensors are asymmetric (not symmetric)
follows from the equilibrium equations for the body element.  Let $\bfa{\gamma} = (\gamma_{ij})$ be the
 strain tensor and $\bfa{\chi} = (\chi_{ij})$ be the 
 torsion (or wryness or curvature or curvature-twist or bend-twist) tensor, which are the asymmetric deformation measures.   The given body force vector is represented by $\bfa{f}\,,$  
and the body couple moment vector is denoted by $\bfa{g}\,.$  
We now write the fundamental equations for this paper. The Cosserat body is taken to have no heat sources or heat flow from its exterior, according to \cite{polish1, polish2}. 
Furthermore, because of heat conduction, it is assumed that heat exchanges slowly during deformation.  As a result, the entire Cosserat body may be thought of as thermally insulated, and the thermodynamic procedure can be seen as adiabatic (heat neither enters nor exits the system). According to the Cosserat theory of elasticity, there are micro-moments at every point of the continuum.

The strain-displacement and torsion-rotation (or curvature-rotation) relations are of the following forms \cite{ji1966sandru, cosserat2022}: 
%
\begin{align}
	\gamma_{ij} &= u_{j,i} + \e_{jik} \phi_k  \,,\label{strain-displace}\\
    \chi_{ij} &=\phi_{j,i}\,. \label{torsion-rotation}
\end{align}
Thanks to \cite{ji1999deform}, we can assume that the rotation satisfies $\phi_{i,j} = \phi_{j,i}\,.$


In the case of static Cosserat body, equilibrium (or balance) equations 
with body forces and body moments respectively express the balance (or conservation) of forces and moments as the following direct tensor representation \cite{ji2017directE, yer2001direct,div1}: 
\begin{align}
	\nabla \cdot \bfa{\tau} + \bfa{f} &= 0 \,, \label{linearmoment2}\\
	\bfa{\e} : \bfa{\tau} + \nabla \cdot \bfa{m} + \bfa{g} &=0\,, \label{angularmoment2}
\end{align}
where the double inner product or double dot product has the form
\[\bfa{\e} : \bfa{\tau} = \e_{irs} \tau_{rs} \bfa{e}_i\,,\]
and recall that $\bfa{\e}$ is the Levi-Civita tensor (permutation tensor).  In a rectangular (Cartesian) coordinate system, Eqs.\ \eqref{linearmoment2}--\eqref{angularmoment2} have the following forms \cite{ji1967deformEringen,cosserat2022}: 
\begin{align}
	\tau_{ij,i} + f_j &= 0 \,, \label{linearmoment}\\
	\e_{jrs}\tau_{rs} + m_{ij,i} + g_j &=0\,, \label{angularmoment}
\end{align}
where $\e_{jrs}$ is the sign of the permutation $(j,r,s)\,.$  It should be noted that the final equation (involving couple stress) was derived laboriously from the integral equation of the theory 
of micropolar elasticity \cite{div1}.

\subsection{Elastic Cosserat  planar model}\label{plane}
We investigate a two-dimensional Cosserat problem motivated by \cite{cosserat2022, energymodes}. In this case, a three-dimensional configuration is restricted to the $x,y$-plane, having rotations only about the $z$-axis \cite{neff17plane}. That is, 
\begin{align}\label{planar}
\begin{split}
\bfa{u} &= (u_1(x_1,x_2), u_2(x_1,x_2),0) \,,\\
\bfa{\phi} &= (0,0, \Phi(x_1,x_2))\,,
\end{split}
\end{align}
where the vector $\bfa{\phi}$ identifies the axis of rotation. Then, 
\eqref{strain-displace} and \eqref{torsion-rotation} become
\begin{align}\label{kinematic}
\begin{split}
\gamma_{ij} &= \frac{\partial u_j}{\partial{x_i}} + \e_{ji} \Phi \,, \\
\chi_{i3} &= \frac{\partial \Phi}{\partial x_i}\,.
\end{split}
\end{align}
Herein, $i,j = 1,2\,.$ Also, $\bfa{\e} = (\e_{ji})$ is the second-order Levi-Civita tensor (permutation tensor).  Furthermore, $\bfa{\gamma}$ and $\bfa{\tau}$ are second-order tensors while $\bfa{\chi},\bfa{m}$ are first-order tensors in the forms
\begin{align}\label{kiforms}
\bfa{\gamma} =
\begin{bmatrix}
\gamma_{11} & \gamma_{12}\\
\gamma_{21} & \gamma_{22}
\end{bmatrix}\,,
\quad 
\bfa{\tau} =
\begin{bmatrix}
\tau_{11} & \tau_{12}\\
\tau_{21} & \tau_{22}
\end{bmatrix}\,,
\quad
\bfa{\chi} =
\begin{bmatrix}
\chi_{13}\\
\chi_{23}
\end{bmatrix}\,,
\quad
\bfa{m}=
\begin{bmatrix}
m_{13}\\
m_{23}
\end{bmatrix}\,.
\end{align}
They are related to each other by the constitutive equations \eqref{rel1a}--\eqref{rel1b} and \eqref{rel2a}--\eqref{rel2b} below.
\subsection{Nonlinear planar strain-limiting Cosserat elasticity energy}\label{energy}
We refer to \cite{gne} 
for a review of the strain-limiting framework.  To describe our nonlinear strain-limiting Cosserat model, let $\alpha, \beta, \xi$ be positive parameters.  Using the notation from \eqref{kiforms}, we consider the quadratic forms 
\begin{align}
Q(\bfa{\chi},\bfa{\gamma}) &= \beta(\bfa{x})^2 \Big(\alpha^2(\chi^2_{13} + \chi^2_{23}) + \xi^2 (\gamma_{11}^2 +  \gamma_{21}^2 + \gamma_{12}^2 + \gamma_{22}^2)\Big) \,, \label{q}\\
Q^*(\bfa{m},\bfa{\tau}) &=  \beta(\bfa{x})^2 \left(\frac{1}{\alpha^2} (m_{13}^2 + m_{23}^2) + \frac{1}{\xi^2} (\tau_{11}^2 + \tau_{21}^2 + \tau_{12}^2  + \tau_{22}^2)\right) \,, \label{qs}
\end{align}
which are positive semi-definite.  For $i,j = 1,2\,,$ utilizing the notation from \eqref{kinematic}, we assume that the constitutive relations between the strains and stresses are given by
\begin{align}
\chi_{i3} &= \frac{1}{1+ \sqrt{Q^*(\bfa{m},\bfa{\tau})}} \frac{1}{\alpha^2} m_{i3} \,, \label{rel1a}\\
\gamma_{ij} &= \frac{1}{1+ \sqrt{Q^*(\bfa{m},\bfa{\tau})}} \frac{1}{\xi^2} \tau_{ij}\,. \label{rel1b}
\end{align}
It is observable that the model is strain-limiting in the sense that $\bfa{\chi} \in \mathbb{R}^2\,,\bfa{\gamma} \in \mathbb{R}^{2\times2}$ 
from \eqref{rel1a}--\eqref{rel1b} meet the condition
\begin{equation}\label{less1}
Q(\bfa{\chi},\bfa{\gamma}) < 1\,,
\end{equation}
where $Q(\bfa{\chi},\bfa{\gamma})$ is defined in \eqref{q}.
It is possible to derive the constitutive relations \eqref{rel1a} and \eqref{rel1b} from a complementary strain-energy density $W^*(\bfa{m}, \bfa{\tau})$   by taking its gradients \cite{gurtin1, energy4a}. That is, 
\begin{equation}\label{straine}
\bfa{\chi} = 
\begin{bmatrix}
 \chi_{13} \\
\chi_{23}
\end{bmatrix}
= \nabla_{\bfa{m}} W^* (\bfa{m}, \bfa{\tau})\,, \quad 
\bfa{\gamma} =
\begin{bmatrix}
 \gamma_{11} & \gamma_{12} \\
\gamma_{21} & \gamma_{22}
\end{bmatrix}
= \nabla_{\bfa{\tau}} W^* (\bfa{m}, \bfa{\tau})\,.
\end{equation}
Equivalently, their components are partial derivatives
\begin{equation}\label{strainec}
\chi_{i3} = \frac{\partial W^*(\bfa{m}, \bfa{\tau})}{\partial m_{i3}}\,, \quad 
\gamma_{ij} = \frac{\partial W^*(\bfa{m}, \bfa{\tau})}{\partial \tau_{ij}}\,.
\end{equation}
Here,
\begin{equation}\label{ws}
W^*(\bfa{m}, \bfa{\tau}) =  \frac{1}{2} \frac{1}{\beta(\bfa{x})^2} \int_0^{Q^*(\bfa{m}, \bfa{\tau})} \frac{1}{1+ \sqrt{t}} \dt =  \frac{1}{\beta(\bfa{x})^2} \left(\sqrt{Q^*} - \ln \left(1+\sqrt{Q^*}\right) \right) \,.
\end{equation}

\bigskip

The relations \eqref{rel1a} and \eqref{rel1b} can be inverted to give the stresses as functions of the strains (for $i,j=1,2$):
\begin{align}
m_{i3} &= \frac{1}{1- \sqrt{Q(\bfa{\chi}, \bfa{\gamma})}} \alpha^2 \chi_{i3} \,, \label{rel2a}\\
\tau_{ij} &= \frac{1}{1- \sqrt{Q(\bfa{\chi}, \bfa{\gamma})}} \xi^2 \gamma_{ij} \,, \label{rel2b}
\end{align}
for $\bfa{\chi} \in \mathbb{R}^2\,,\bfa{\gamma} \in \mathbb{R}^{2\times2}$ satisfying $Q(\bfa{\chi}, \bfa{\gamma}) <1$ \eqref{less1}.  Because the strains increase to their limiting levels asymptotically in \eqref{rel1a} and \eqref{rel1b}, the constitutive relations can be inverted as a result of this type of behavior (refer to Section \ref{mono} for monotonicity).  We can now see that our model is hyperelastic since the relationships \eqref{rel1a} and \eqref{rel1b} may be derived from a strain-energy density $W(\bfa{\chi}, \bfa{\gamma})$ by using its gradients \cite{gurtin1, energy4a, raj23cr1d} as follows:
\begin{equation}\label{stresse}
\bfa{m} = 
\begin{bmatrix}
 m_{13} \\
m_{23}
\end{bmatrix}
= \nabla_{\bfa{\chi}} W (\bfa{\chi}, \bfa{\gamma})\,, \quad 
\bfa{\tau} =
\begin{bmatrix}
 \tau_{11} & \tau_{12} \\
\tau_{21} & \tau_{22}
\end{bmatrix}
= \nabla_{\bfa{\gamma}} W (\bfa{\chi}, \bfa{\gamma})\,.
\end{equation}
Equivalently (by partial derivatives),
\begin{equation}\label{stressec}
m_{i3} = \frac{\partial W(\bfa{\chi}, \bfa{\gamma})}{\partial \chi_{i3}}\,, \quad 
\tau_{ij} = \frac{\partial W(\bfa{\chi}, \bfa{\gamma})}{\partial \gamma_{ij}}\,.
\end{equation}
Herein,
\begin{equation}\label{we}
W(\bfa{\chi}, \bfa{\gamma}) =  \frac{1}{2} \frac{1}{\beta(\bfa{x})^2} \int_0^{Q(\bfa{\chi}, \bfa{\gamma})} \frac{1}{1 - \sqrt{t}} \dt =  - \frac{1}{\beta(\bfa{x})^2} \left(\sqrt{Q} + \ln \left(1-\sqrt{Q}\right) \right) \,.
\end{equation}

\subsection{Monotonicity}\label{mono}
Note that $\bfa{m}, \bfa{\chi} \in \mathbb{R}^2$ are vectors in our 2D problem, and the matrices $\bfa{\gamma}, \bfa{\tau} \in \mathbb{R}^{2\times2}$ can be viewed as vectors by the isomorphism between $\mathbb{R}^{r \times s}$ and $\mathbb{R}^{rs}$ via the vectorization (here $r=s=2$).  
It is well-known that $\bfa{m}$ and $\bfa{\tau}$ are monotone if and only if for each $\bfa{\chi},\bfa{\gamma}\,,$ the Jacobian $\nabla_{\bfa{\chi}} \bfa{m}(\bfa{\chi},\bfa{\gamma}), \nabla_{\bfa{\gamma}} \bfa{\tau} (\bfa{\chi},\bfa{\gamma})$ are positive semi-definite (see the book \cite[ Proposition 12.3]{rockafellar2009variational} for this statement and proof).
Using this remark and the positive semi-definite property, we now establish a monotonicity feature of our model that is mathematically appealing and useful.  
The following proposition implies monotonicity, that is, entry-wisely, increasing the couple stress $\bfa{m}$ and force stress $\bfa{\tau}$ corresponds to increasing the torsion $\bfa{\chi}$ and strain $\bfa{\gamma}\,,$ respectively.  

\begin{proposition}\label{prop}
For any $\bfa{\chi} \in \mathbb{R}^2$ and $\bfa{\gamma} \in \mathbb{R}^{2 \times 2}$ in the form of vectorization $\bfa{\gamma} \in \mathbb{R}^{4}$ such that $Q(\bfa{\chi}, \bfa{\gamma}) <1\,,$ it holds that the Hessian of the strain-energy density
\begin{align}\label{psd}
\begin{split}
\mathbf{H} (W(\bfa{\chi}, \bfa{\gamma})) &= \mathbf{D}^2 (W(\bfa{\chi}, \bfa{\gamma}))^{\tu{T}} = (\mathbf{J} (\nabla W(\bfa{\chi}, \bfa{\gamma})))^{\tu{T}}= 
\left(\begin{bmatrix}
\dfrac{\partial \bfa{m}(\bfa{\chi},\bfa{\gamma})}{\partial \bfa{\chi}} & \dfrac{\partial \bfa{m}(\bfa{\chi},\bfa{\gamma})}{\partial \bfa{\gamma}}\\\\
\dfrac{\partial \bfa{\tau}(\bfa{\chi},\bfa{\gamma})}{\partial \bfa{\chi}} & \dfrac{\partial \bfa{\tau}(\bfa{\chi},\bfa{\gamma})}{\partial \bfa{\gamma}}
\end{bmatrix}
\right)^{\tu{T}}\nonumber \\
&=
\begin{bmatrix}
 \dfrac{\partial^2 W(\bfa{\chi},\bfa{\gamma})}{\partial \bfa{\chi}^2} & \dfrac{\partial^2 W(\bfa{\chi},\bfa{\gamma})}{ \partial  \bfa{\chi} \partial 
 \bfa{\gamma} }\\\\
\dfrac{\partial^2 W(\bfa{\chi},\bfa{\gamma})}{ \partial \bfa{\gamma} \partial \bfa{\chi}} & \dfrac{\partial^2 W(\bfa{\chi},\bfa{\gamma})}{\partial \bfa{\gamma}^2}
\end{bmatrix}
\end{split}
\end{align}
is positive semi-definite.
\end{proposition}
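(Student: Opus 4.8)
The plan is to exploit the fact that the strain-energy density depends on its six scalar arguments only through the single scalar $Q(\bfa{\chi},\bfa{\gamma})$, which by \eqref{q} is itself a positive-definite quadratic form. First I would collect the independent strain variables into one vector $\bfa{z} = (\chi_{13}, \chi_{23}, \gamma_{11}, \gamma_{21}, \gamma_{12}, \gamma_{22})^{\tu{T}} \in \mathbb{R}^6$ and write
\[
Q = \bfa{z}^{\tu{T}}\bfa{D}\bfa{z}\,, \qquad \bfa{D} = \beta(\bfa{x})^2\,\diag(\alpha^2,\alpha^2,\xi^2,\xi^2,\xi^2,\xi^2)\,,
\]
where $\bfa{D}$ is symmetric positive definite since $\alpha,\beta(\bfa{x}),\xi>0$. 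Writing $W = g(Q)$ with $g(Q) = -\beta(\bfa{x})^{-2}\big(\sqrt{Q}+\ln(1-\sqrt{Q})\big)$ as in \eqref{we}, the six stresses assemble into the single gradient vector $\nabla_{\bfa{z}} W = 2 g'(Q)\,\bfa{D}\bfa{z}$ because $\nabla_{\bfa{z}}Q = 2\bfa{D}\bfa{z}$. I would first confirm that this reproduces exactly the constitutive relations \eqref{rel2a}--\eqref{rel2b}, which amounts to checking $g'(Q) = \tfrac{1}{2}\beta(\bfa{x})^{-2}(1-\sqrt{Q})^{-1}$ directly from \eqref{we}.

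Next I would differentiate once more. By the product and chain rules,
\[
\mathbf{H}(W) = 2 g'(Q)\,\bfa{D} + 4 g''(Q)\,(\bfa{D}\bfa{z})(\bfa{D}\bfa{z})^{\tu{T}}\,,
\]
which is manifestly symmetric, consistent with the equality of mixed partials implicit in \eqref{psd}. The two derivatives are
\[
g'(Q) = \tfrac{1}{2}\beta(\bfa{x})^{-2}(1-\sqrt{Q})^{-1}\,, \qquad g''(Q) = \tfrac{1}{4}\beta(\bfa{x})^{-2}\,Q^{-1/2}(1-\sqrt{Q})^{-2}\,.
\]
The key observation is that, because $0 \le Q < 1$ by \eqref{less1}, both are nonnegative: $g'(Q) > 0$ and $g''(Q) \ge 0$ on $(0,1)$. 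To conclude, I would test the Hessian against an arbitrary $\bfa{w}\in\mathbb{R}^6$:
\[
\bfa{w}^{\tu{T}}\mathbf{H}(W)\bfa{w} = 2 g'(Q)\,\bfa{w}^{\tu{T}}\bfa{D}\bfa{w} + 4 g''(Q)\,(\bfa{w}^{\tu{T}}\bfa{D}\bfa{z})^2\,.
\]
The first term is nonnegative since $g'(Q) > 0$ and $\bfa{D}$ is positive definite; the second is nonnegative since it is a perfect square scaled by $g''(Q)\ge 0$. Hence $\mathbf{H}(W)$ is positive semi-definite, which is exactly the claim.

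The main obstacle I anticipate is the behavior at $\bfa{z}=\bfa{0}$ (equivalently $Q=0$), where $g''$ is singular. There $W$ is only $C^2$: its expansion $W = \beta(\bfa{x})^{-2}\big(\tfrac{1}{2}Q + \tfrac{1}{3}Q^{3/2} + \cdots\big)$ exhibits a cubic-in-$\bfa{z}$ tail. The rank-one term must therefore be handled by a limiting argument: since $(\bfa{D}\bfa{z})(\bfa{D}\bfa{z})^{\tu{T}}$ vanishes quadratically in $\bfa{z}$ while $g''(Q)$ blows up only like $Q^{-1/2}\sim|\bfa{z}|^{-1}$, the product tends to $\bfa{0}$ and the Hessian reduces to $2g'(0)\bfa{D} = \beta(\bfa{x})^{-2}\bfa{D}$, still positive definite. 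Away from the origin everything is smooth and the computation above is literal. An equivalent and perhaps cleaner route is to invoke the remark preceding the proposition: it suffices to show the Jacobian $\nabla_{(\bfa{\chi},\bfa{\gamma})}(\bfa{m},\bfa{\tau})$ is positive semi-definite, and this Jacobian coincides with the Hessian computed above, so the same two-term splitting finishes the argument.
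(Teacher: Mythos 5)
Your proof is correct, and it takes a genuinely different (and cleaner) route than the paper's. The paper proceeds by brute force: it writes out every block of second partial derivatives explicitly in terms of the abbreviations $P = 1/(\sqrt{Q}(1-\sqrt{Q})^2)$ and $R = 1/(1-\sqrt{Q})$, and then verifies by direct expansion that the quadratic form of the Hessian equals $\frac{1}{\beta^2}\frac{1}{1-\sqrt{Q}}\,Q(\bfa{a},\bfa{b})$ plus the weighted square $\frac{\beta^2}{\sqrt{Q}(1-\sqrt{Q})^2}\bigl(\alpha^2 a_1\chi_{13}+\alpha^2 a_2\chi_{23}+\xi^2 b_{11}\gamma_{11}+\cdots+\xi^2 b_{22}\gamma_{22}\bigr)^2$. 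You instead exploit the composite structure $W=g(Q)$ with $Q=\bfa{z}^{\tu{T}}\bfa{D}\bfa{z}$, so the chain rule gives $\mathbf{H}(W)=2g'(Q)\,\bfa{D}+4g''(Q)\,(\bfa{D}\bfa{z})(\bfa{D}\bfa{z})^{\tu{T}}$ in one line; after substituting $g'$ and $g''$, your two terms are exactly the paper's two terms (your $2g'(Q)\,\bfa{w}^{\tu{T}}\bfa{D}\bfa{w}$ is the paper's $Q(\bfa{a},\bfa{b})$ term, and $4g''(Q)(\bfa{w}^{\tu{T}}\bfa{D}\bfa{z})^2$ reproduces the square term with its $\beta^2$ prefactor), but you obtain the decomposition structurally rather than by entry-by-entry bookkeeping, and your argument generalizes verbatim to any energy of the form $g(\bfa{z}^{\tu{T}}\bfa{D}\bfa{z})$ with $g'\ge 0$, $g''\ge 0$, and $\bfa{D}$ positive semi-definite. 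A further advantage: both the paper's $P$ and your $g''$ are singular at $Q=0$, so the paper's displayed quadratic-form identity is strictly valid only for $Q>0$, a point the paper leaves silent; your limiting argument (the rank-one factor vanishes quadratically in $\bfa{z}$ while $g''$ blows up only like $|\bfa{z}|^{-1}$, so the Hessian at the origin reduces to $2g'(0)\bfa{D}$, which is positive definite) actually closes that small gap. The only refinement to make is to state explicitly that the series expansion of $W$ near $\bfa{z}=\bfa{0}$ shows $W$ is $C^2$ there, so that the Hessian is well defined at the origin; with that sentence added, your proof is complete and marginally stronger than the paper's.
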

\begin{proof}
Denote $\beta:=\beta(\bfa{x})\,, Q:= Q(\bfa{\chi},\bfa{\gamma})$ from \eqref{q}, and
\[P:=\frac{1}{\sqrt{Q}(1-\sqrt{Q})^2}\,, \qquad R:= \frac{1}{1-\sqrt{Q}}\,.\]
We first observe that
\begin{align*}
\dfrac{\partial^2 W(\bfa{\chi},\bfa{\gamma})}{\partial \bfa{\chi}^2} 
&= \begin{bmatrix}
   \dfrac{\partial^2 W}{\partial \chi_{13} \partial \chi_{13}} & \dfrac{\partial^2 W}{\partial \chi_{23} \partial \chi_{13}}\\\\
   \dfrac{\partial^2 W}{\partial \chi_{13} \partial \chi_{23}} & \dfrac{\partial^2 W}{\partial \chi_{23} \partial \chi_{23}}
\end{bmatrix}
&= 
\begin{bmatrix}
      \alpha^4 \beta^2 \chi^2_{13} P + \alpha^2 R \quad &   \alpha^4 \beta^2 \chi_{13} \chi_{23} P\\\\
     \alpha^4 \beta^2 \chi_{23} \chi_{13}  P 
    \quad &  \alpha^4 \beta^2 \chi^2_{23} P + \alpha^2 R
\end{bmatrix}\,,
\end{align*}
\begin{align*}
\dfrac{\partial^2 W(\bfa{\chi},\bfa{\gamma})}{ \partial \bfa{\chi} \partial \bfa{\gamma}}
&= \begin{bmatrix}
   \dfrac{\partial^2 W}{\partial \chi_{13} \partial \gamma_{11} } & \dfrac{\partial^2 W}{ \partial \chi_{23} \partial \gamma_{11}}\\\\
   \dfrac{\partial^2 W}{\partial \chi_{13} \partial \gamma_{12} } & \dfrac{\partial^2 W}{\partial \chi_{23} \partial \gamma_{12} } \\\\
   \dfrac{\partial^2 W}{\partial \chi_{13} \partial \gamma_{21}} & \dfrac{\partial^2 W}{\partial \chi_{23} \partial \gamma_{21}}\\\\
   \dfrac{\partial^2 W}{\partial\chi_{13} \partial \gamma_{22}} & \dfrac{\partial^2 W}{\partial \chi_{23} \partial \gamma_{22} }
\end{bmatrix}
&= 
\begin{bmatrix}
     \alpha^2 \beta^2  \xi^2 \gamma_{11} \chi_{13} P  \quad 
     & \alpha^2 \beta^2  \xi^2 \gamma_{11} \chi_{23} P\\ \\
    \alpha^2 \beta^2  \xi^2  \gamma_{12} \chi_{13} P 
    \quad & \alpha^2 \beta^2 \xi^2  \gamma_{12} \chi_{23} P\\ \\
    \alpha^2 \beta^2  \xi^2  \gamma_{21} \chi_{13} P 
    \quad & \alpha^2 \beta^2  \xi^2  \gamma_{21} \chi_{23} P\\ \\
    \alpha^2 \beta^2  \xi^2  \gamma_{22} \chi_{13} P 
    \quad & \alpha^2 \beta^2 \xi^2  \gamma_{22} \chi_{23} P
\end{bmatrix}\,,
\end{align*}
\begin{align*}
\dfrac{\partial^2 W(\bfa{\chi},\bfa{\gamma})}{ \partial \bfa{\gamma} \partial \bfa{\chi}} 
&= \begin{bmatrix}
   \dfrac{\partial^2 W}{\partial \gamma_{11} \partial \chi_{13} } 
   & \dfrac{\partial^2 W}{\partial \gamma_{12} \partial \chi_{13} }
   & \dfrac{\partial^2 W}{\partial \gamma_{21} \partial \chi_{13} } 
   & \dfrac{\partial^2 W}{\partial \gamma_{22} \partial \chi_{13} } \\\\
   \dfrac{\partial^2 W}{\partial \gamma_{11} \partial \chi_{23} } 
   & \dfrac{\partial^2 W}{\partial \gamma_{12} \partial \chi_{23} }
   & \dfrac{\partial^2 W}{\partial \gamma_{21} \partial \chi_{23} } 
   & \dfrac{\partial^2 W}{\partial \gamma_{22} \partial \chi_{23} }
\end{bmatrix}\\
&= 
\begin{bmatrix}
     \alpha^2 \beta^2  \xi^2 \chi_{13} \gamma_{11} P  \quad &  \alpha^2 \beta^2  \xi^2 \chi_{13} \gamma_{12} P \quad &  \alpha^2 \beta^2  \xi^2 \chi_{13} \gamma_{21} P \quad &  \alpha^2 \beta^2  \xi^2 \chi_{13} \gamma_{22} P\\\\
    \alpha^2 \beta^2  \xi^2 \chi_{23} \gamma_{11} P 
    \quad & \alpha^2 \beta^2  \xi^2 \chi_{23} \gamma_{12} P & \alpha^2 \beta^2  \xi^2 \chi_{23} \gamma_{21} P & \alpha^2 \beta^2  \xi^2 \chi_{23} \gamma_{22} P
\end{bmatrix}\,,
\end{align*}
and
\begin{align*}
\dfrac{\partial^2 W(\bfa{\chi},\bfa{\gamma})}{\partial \bfa{\gamma}^2}
&= \begin{bmatrix}
   \dfrac{\partial^2 W}{\partial \gamma_{11} \partial \gamma_{11} } 
   & \dfrac{\partial^2 W}{ \partial \gamma_{12} \partial \gamma_{11}}
   & \dfrac{\partial^2 W}{ \partial \gamma_{21} \partial \gamma_{11}} 
   & \dfrac{\partial^2 W}{\partial \gamma_{22} \partial \gamma_{11} } \\\\
   \dfrac{\partial^2 W}{\partial \gamma_{11} \partial \gamma_{12} } 
   & \dfrac{\partial^2 W}{ \partial \gamma_{12} \partial \chi_{12}}
   & \dfrac{\partial^2 W}{\partial \gamma_{21} \partial \gamma_{12} } 
   & \dfrac{\partial^2 W}{\partial \gamma_{22} \partial \gamma_{12} }\\\\
   \dfrac{\partial^2 W}{\partial \gamma_{11} \partial \gamma_{21} } 
   & \dfrac{\partial^2 W}{\partial \gamma_{12} \partial \gamma_{21} }
   & \dfrac{\partial^2 W}{\partial \gamma_{21} \partial \gamma_{21} } 
   & \dfrac{\partial^2 W}{\partial \gamma_{22} \partial \gamma_{21} }\\\\
   \dfrac{\partial^2 W}{\partial \gamma_{11} \partial \gamma_{22} } 
   & \dfrac{\partial^2 W}{\partial \gamma_{12} \partial \gamma_{22} }
   & \dfrac{\partial^2 W}{\partial \gamma_{21} \partial \gamma_{22} } 
   & \dfrac{\partial^2 W}{\partial \gamma_{22} \partial \gamma_{22} }
\end{bmatrix}\\
&= 
\begin{bmatrix}
     \beta^2 \xi^4 \gamma^2_{11} P + \xi^2 R \quad &  \beta^2 \xi^4 \gamma_{11} \gamma_{12} P &  \beta^2 \xi^4 \gamma_{11} \gamma_{21} P &  \beta^2 \xi^4 \gamma_{11} \gamma_{22} P\\\\
    \beta^2 \xi^4 \gamma_{12} \gamma_{11} P 
    \quad & \beta^2 \xi^4 \gamma^2_{12} P + \xi^2 R \quad & \beta^2 \xi^4 \gamma_{12} \gamma_{21} P \quad & \beta^2 \xi^4 \gamma_{12} \gamma_{22} P\\\\
    \beta^2 \xi^4  \gamma_{21} \gamma_{11} P 
    \quad & \beta^2 \xi^4 \gamma_{21}\gamma_{12}  P \quad & \beta^2 \xi^4 \gamma^2_{21} P + \xi^2 R \quad & \beta^2 \xi^4 \gamma_{21}\gamma_{22}  P\\\\
    \beta^2 \xi^4 \gamma_{22} \gamma_{11}  P 
    \quad & \beta^2 \xi^4  \gamma_{22} \gamma_{12} P \quad & \beta^2 \xi^4 \gamma_{22} \gamma_{21}  P \quad & \beta^2 \xi^4 \gamma^2_{22} P + \xi^2 R
\end{bmatrix}\,.
\end{align*}

For all vectors 
\[\bfa{a}=(a_1, a_2)^{\tu{T}} \in \mathbb{R}^2\,, \quad \bfa{b} = (b_{11}, b_{12}, b_{21}, b_{22})^{\tu{T}} = \mathbf{vec}\left( 
\begin{bmatrix}
b_{11} & b_{21}\\
b_{12} & b_{22}
\end{bmatrix}
\right)\in \mathbb{R}^4\,,\]
we obtain
\begin{align*}
\begin{split}
& \begin{bmatrix}
\bfa{a} & \bfa{b}
\end{bmatrix}
 \mathbf{J} (\nabla W(\bfa{\chi}, \bfa{\gamma}))^{\tu{T}} 
\begin{bmatrix}
\bfa{a}\\
\bfa{b}
\end{bmatrix}=\frac{1}{\beta(\bfa{x})^2}\frac{1}{1 - \sqrt{Q}} Q(\bfa{a}, \bfa{b}) \\
& \quad +  \frac{\beta(\bfa{x})^2}{\sqrt{Q} \left(1 - \sqrt{Q}\right)^2} \left(\alpha^2 a_1 \chi_{13} + \alpha^2 a_2 \chi_{23} + \xi^2 b_{11} \gamma_{11} 
+ \xi^2 b_{12} \gamma_{12} + \xi^2 b_{21} \gamma_{21} + \xi^2 b_{22} \gamma_{22}\right)^2\,.
\end{split}
\end{align*}
The proof is completed because of the positive semi-definiteness of $Q(\cdot,\cdot)$ defined in \eqref{q}.
\end{proof}

\subsection{Field equations for plane nonlinear strain-limiting Cosserat elasticity}\label{feqs}
In our 2D static case, with the variables $\Phi$ and $\bfa{u} = (u_1,u_2)$ (abbreviation of $\bfa{u} = (u_1,u_2,0)$) in \eqref{planar}, and $\bfa{f} = (f_1,f_2)\,,$ for $i,j = 1,2\,,$ using \eqref{kinematic}, the quadratic form \eqref{q} becomes
\begin{align}\label{qu}
Q(\bfa{\chi},\bfa{\gamma}) = Q(\bfa{u},\Phi) &= \beta(\bfa{x}) ^2\left(\alpha^2 \Big((\Phi_{,1})^2 + (\Phi_{,2})^2\Big) + \xi^2  \sum_{i,j=1,2} (u_{j,i} + \bfa{\e}_{ji}\Phi)^2\right)\,,
\end{align}
where the subscript $,j$ stands for differentiation with respect to  $x_j\,,$
\[\bfa{\e}_{11}= \bfa{\e}_{22}=0\,, \quad \bfa{\e}_{12} =1\,, \quad \bfa{\e}_{21} = -1   \,.\]
In the plane, the balance of forces \eqref{linearmoment} is given by
\begin{align}\label{bof}
\frac{\partial}{\partial x_i}\left(\frac{\xi^2 (u_{j,i}+ \bfa{\e}_{ji}\Phi)}{1 - \sqrt{Q(\bfa{u},\Phi)}} \right) + f_j = 0\,. 
\end{align}
The balance of moments \eqref{angularmoment} in the plane has the form
\begin{align}\label{bom}
\frac{\bfa{\e}_{ij} \xi^2 ( u_{j,i} + \bfa{\e}_{ji} \Phi)}{1 - \sqrt{Q(\bfa{u},\Phi)}} + \frac{\partial}{\partial x_i}\left(\frac{\alpha^2 \Phi_{,i}}{1 - \sqrt{Q(\bfa{u},\Phi)}} \right) + g_3=0\,.
\end{align}
Eqs.~\eqref{bof}--\eqref{bom} are 2D field equations through index notation for plane nonlinear strain-limiting Cosserat elasticity, and these equations are explicitly equivalent to the following system of equations:
\begin{align}
\frac{\partial}{\partial x_1}\left(\frac{\xi^2 u_{1,1}}{1 - \sqrt{Q(\bfa{u},\Phi)}} \right) 
+ \frac{\partial}{\partial x_2}\left(\frac{\xi^2 (u_{1,2}+ \Phi)}{1 - \sqrt{Q(\bfa{u},\Phi)}} \right) + f_1 = 0\,, \label{bof1}\\
\frac{\partial}{\partial x_1}\left(\frac{\xi^2 (u_{2,1} - \Phi)}{1 - \sqrt{Q(\bfa{u},\Phi)}} \right) 
+ \frac{\partial}{\partial x_2}\left(\frac{\xi^2 u_{2,2}}{1 - \sqrt{Q(\bfa{u},\Phi)}} \right) + f_2 = 0\,,\label{bof2}\\
\frac{\xi^2 (u_{2,1} - u_{1,2} - 2\Phi)}{1 - \sqrt{Q(\bfa{u},\Phi)}} + \frac{\partial}{\partial x_i}\left(\frac{\alpha^2 \Phi_{,i}}{1 - \sqrt{Q(\bfa{u},\Phi)}} \right) + g_3=0\,.\label{bom12}
\end{align}

\noindent \textbf{Boundary conditions.} 
We denote by $\partial \Omega = \partial \Omega_D \cup \partial \Omega_N$ a partition of the boundary of $\Omega\,.$  Conventionally, $\partial \Omega_N = \Gamma_N$ is a relatively open set, and $\partial \Omega_D=\Gamma_D$ is a closed
set in $\partial \Omega\,.$ 
Here, $\bfa{n}$ stands for the piecewise smooth field on $\partial \Omega$ of outward pointing unit
normal vectors. 
On  $\Gamma_D\,,$ 
 the displacement $\bfa{\hat{u}}$ and the rotation $\bfa{\hat{\phi}}$ are prescribed.
 On $\Gamma_N\,,$ the tractions $\bfa{\hat{\tau}}$ and $\bfa{\hat{m}}$ are prescribed, respectively. Then, we obtain mixed boundary conditions for the equilibrium equations \eqref{bof}--\eqref{bom} of the kinematic relations \eqref{strain-displace}--\eqref{torsion-rotation} and of the constitutive equations \eqref{rel2a}--\eqref{rel2b}.
 More precisely \cite{polish1, polish2,ji1966sandru,boundaryvar, boundarystay}, for $i,j = 1,2\,,$
\begin{align}
	\bfa{u}(\bfa{x})= \bfa{\hat{u}}(\bfa{x})\,, \quad 
	&\bfa{\phi}(\bfa{x})= \bfa{\hat{\phi}}(\bfa{x}) = (0,0,\hat{\Phi}(\bfa{x})) \quad  & \text{ on } \Gamma_{D}\,,\label{bc-dis-ro}\\
	%
	\tau_{ij}(\bfa{x})\, n_i(\bfa{x}) = \hat{\tau}_j(\bfa{x})\,, \quad 
	&  
	 m_{i3}(\bfa{x})\, n_i(\bfa{x}) = \hat{m}_3(\bfa{x}) \quad & \text{ on } \Gamma_N\,.\label{bc-tractions}
\end{align}
%
In \cite{existence1965}, the solution of a linearized  version of the above mixed-boundary value problem is proven to be unique.

Note that if one would like to investigate dynamics, then the right-hand side of \eqref{bof1}, \eqref{bof2}, and \eqref{bom12} will respectively be replaced by $\rho (u_1)_{tt}\,, \rho (u_2)_{tt}\,,$ and $\rho J_3 (\Phi)_{tt}\,,$ where $\rho$ denote the body mass density, and $\rho J_3$ is the microinertia \cite{ji1967deformEringen, energymodes, polish1}. 

\subsection{Variational problem of plane nonlinear strain-limiting Cosserat elasticity}\label{vareqs}
In order to establish the variational form of our planar nonlinear strain-limiting Cosserat elasticity problem \eqref{bof}--\eqref{bom}, we consider the following special boundary conditions of \eqref{bc-dis-ro}--\eqref{bc-tractions} (for $i, j = 1,2$):
\begin{align}\label{bcsall}
\begin{split}
&u_1 = 0, \quad u_2 = 0, \quad \Phi = 0, \quad \bfa{x} \in \Gamma_D\,,\\
&\tau_{ij} n_i = 0, \quad m_{i3} n_i = 0, \qquad  \bfa{x} \in \Gamma_N\,,
\end{split}
\end{align} 
where $\partial \Omega = \Gamma_D \cup \Gamma_N\,,$ and $\bfa{u} = (u_1,u_2,0)\,, \bfa{\phi} = (0,0, \Phi)$ from \eqref{planar}. 

Note that the Neumann (natural or traction) boundary conditions will be self-contained in the linear functional of the variational problem below. In contrast, the Dirichlet (essential or displacement) boundary conditions are fulfilled by the choice of the solution and test function space
\begin{equation}\label{testsp}
V = H^1_{\Gamma_D}(\Omega) : = \{v \in H^1(\Omega) \ | \ \mbox{tr}(v) = 0 \text{ on } \Gamma_D\}\,,
\end{equation}
where $\mbox{tr}(v) \in L^2(\Gamma_D)$ represents the boundary ``values'' of $v$ on $\Gamma_D$ and is called the trace of $v\,.$ Moreover, let $\bfa{V} = V^2$ and $\bfa{\mathcal{V}} = V^3$.

To obtain the variational form of the balance equations \eqref{linearmoment2}--\eqref{angularmoment2}, we multiply them by test functions $\bfa{v} \in \bfa{V}$ and  $\Psi \in V\,,$ respectively, then employ integration by parts to reach the coordinate-free (in tensor notations) problem: find $\bfa{u} \in \bfa{V}$ and  $\Phi \in V$ such that
\begin{align}\label{vbofmip}
\begin{split}
  - \int_{\Omega} (\nabla \cdot \bfa{\tau} ) \cdot \bfa{v} \dx  &= \int_{\Omega} \bfa{f} \cdot \bfa{v} \dx\,,\\
  - \int_{\Omega} (\bfa{\e} : \bfa{\tau}) \Psi \dx - \int_{\Omega} (\nabla \cdot \bfa{m}) \Psi \dx &= \int_{\Omega} g_3 \Psi \dx\,.
\end{split}
\end{align}
Again, using integration by parts for \eqref{vbofmip}, we get
\begin{align}\label{vbofmt}
\begin{split}
  \int_{\Omega} \bfa{\tau}^{\tu{T}} \cdot \bfa{Dv} \dx - \int_{ \Omega} \Div(\bfa{\tau} \bfa{v}) \dx \ &= \int_{\Omega} \bfa{f} \cdot \bfa{v} \dx\,,\\
  - \int_{\Omega} (\bfa{\e} : \bfa{\tau}) \Psi \dx + \int_{\Omega} \bfa{m} \cdot \nabla \Psi \dx - \int_{\Omega} \Div(\Psi \bfa{m})\dx &= \int_{\Omega} g_3 \Psi \dx\,.
\end{split}
\end{align}
Then, the divergence theorem is utilized (with $(\bfa{\tau} \bfa{v}) \cdot \bfa{n} = \bfa{v} \cdot (\bfa{\tau}^{\tu{T}} \bfa{n}$)) to obtain
\begin{align}\label{vbofmtb}
\begin{split}
  \int_{\Omega} \bfa{\tau}^{\tu{T}} \cdot \bfa{Dv} \dx - \int_{\partial \Omega} \bfa{v} \cdot (\bfa{\tau}^{\tu{T}} \bfa{n}) \da \ &= \int_{\Omega} \bfa{f} \cdot \bfa{v} \dx\,,\\
  - \int_{\Omega} (\bfa{\e} : \bfa{\tau}) \Psi \dx + \int_{\Omega} \bfa{m} \cdot \nabla \Psi \dx - \int_{\partial \Omega} \Psi \bfa{m} \cdot \bfa{n} \da &= \int_{\Omega} g_3 \Psi \dx\,.
\end{split}
\end{align}

Given the boundary conditions \eqref{bcsall}, note that the Neumann (natural or traction) boundary conditions are imposed on the linear functional employed in the variational problem \eqref{vbofmtb} thanks to 
\begin{align*}
\int_{\partial \Omega} \bfa{v} \cdot (\bfa{\tau}^{\tu{T}} \bfa{n}) \da &= \int_{\Gamma_D} \bfa{v} \cdot (\bfa{\tau}^{\tu{T}} \bfa{n}) \da + \int_{\Gamma_N} \bfa{v} \cdot (\bfa{\tau}^{\tu{T}} \bfa{n}) \da\,, \\
\int_{\partial \Omega} \Psi \bfa{m} \cdot \bfa{n} \da &= \int_{\Gamma_D} \Psi \bfa{m} \cdot \bfa{n} \da + \int_{\Gamma_N} \Psi \bfa{m} \cdot \bfa{n} \da\,.
\end{align*}

\noindent Taking into account this remark, in simulations, benefiting from the boundary conditions \eqref{bcsall}, the variational form of our strain-limiting Cosserat problem \eqref{linearmoment2}--\eqref{angularmoment2} (or \eqref{bof}--\eqref{bom}) is as follows: Find solutions $\bfa{u} \in \bfa{V}\,, \Phi \in V\,,$ with test functions $\bfa{v} \in \bfa{V}\,, \Psi \in V$ for the problem
\begin{align}\label{vbofmtvb}
\begin{split}
  \int_{\Omega} \bfa{\tau}^{\tu{T}} \cdot \bfa{Dv} \dx   &= \int_{\Omega} \bfa{f} \cdot \bfa{v} \dx\,,\\
  -\int_{\Omega} (\bfa{\e} : \bfa{\tau}) \Psi \dx + \int_{\Omega} \bfa{m} \cdot \nabla \Psi \dx  &= \int_{\Omega} g_3 \Psi \dx\,.
\end{split}
\end{align}
\section{Fine-grid discretization and Picard iteration for linearization}\label{ffem}
This section is devoted to a fine-scale finite element approximation of our 2D variational problem \eqref{vbofmtvb} and its linearization by Picard iteration \cite{gne}. 

In order to discretize \eqref{vbofmtvb}, we first let $\mathcal{T}^h$ be a fine grid as a partition of the computational domain $\Omega$ into fine cells $K_j^h\,,$ for $j = 1,..., N_c^h\,,$ where $N_c^h$ denotes the total number of such fine cells, and $h>0$ is called the fine-grid size.  The grid $\mathcal{T}^h$ is assumed to be fine enough that it resolves all variations of the high-contrast multiscale coefficients $\xi, \alpha$, and $\beta\,.$ 
Next, with regard to the fine grid $\mathcal{T}^h$ (which is triangular in our paper), let $V_h$ be the first-order Galerkin (standard) finite element basis space:
\begin{equation}\label{vh}
V_h:= \{ v \in V: v|_{K^h} \in \mathcal{P}_1(K^h) \; \forall K^h \in \mathcal{T}^{h}\}\,,
\end{equation}
where $\mathcal{P}_1(K^h)$ is the space of all linear functions (polynomials of degree  $\leq 1$) over the triangle $K^h\,.$ 
In other words, the finite element space $V_h$ consists of conforming piecewise linear functions $v$   
defined on the mesh $\mathcal{T}^h\,.$  
Let $\bfa{V}_h = V_h^2$
and denote the $[L^2(\Omega)]^2$ projection operator onto $\bfa{V}_h$ by $P_h\,.$  For later use, we also define the function space $\bfa{\mathcal{V}}_h = V_h^3\,.$
%



\textbf{Nonlinear Solve:} Inspired by one of the authors' previous works \cite{gne, cemnlporo,rtt21,ttr22,sdt22}, we use Picard linearization to deal with nonlinearities. At the beginning of this Picard iterative process, we guess $\bfa{u}^0 =(u_1^0,u_2^0) \in \bfa{V}_h\,, \Phi^0 \in V_h\,.$  In this section, the subscript $h$ for solutions can be removed from the fine grid to simplify the notation.  With $n=0,1,2, \cdots\,,$ given $\bfa{u}^n =(u_1^n,u_2^n) \in \bfa{V}_h\,, \Phi^n \in V_h\,,$ we define the following linear and bilinear forms (for all $\bfa{u}, \bfa{v} \in \bfa{V}_h\,, \Phi, \Psi \in V_h$):
\begin{align}\label{bllforms}
\begin{split}
a_{\bfa{u} \bfa{u}}^{n} (\bfa{u},\bfa{v}) &= \int_{\Omega} \frac{\xi^2}{1 - \sqrt{Q(\bfa{u}^{n},\Phi^{n})}} \frac{\partial u_j}{\partial x_i} \frac{\partial v_j}{\partial x_i} \dx \,,\\
a_{\bfa{u} \Phi}^{n} (\Phi,\bfa{v}) &=\int_{\Omega} \frac{ \bfa{\e}_{ji}\xi^2 }{1 - \sqrt{Q(\bfa{u}^{n},\Phi^{n})}} \, \Phi \, \frac{\partial v_j}{\partial x_i} \dx\,,\\
a_{\Phi \bfa{u}}^{n} (\bfa{u}, \Psi) &=  \int_{\Omega} \frac{\bfa{\e}_{ji} \xi^2 }{1 - \sqrt{Q(\bfa{u}^{n},\Phi^{n})}} \frac{\partial u_j}{\partial x_i} \Psi \dx = - \int_{\Omega} \frac{\xi^2}{1 - \sqrt{Q(\bfa{u}^{n},\Phi^{n})}} \left(\frac{\partial u_2}{\partial x_1}-\frac{\partial u_1}{\partial x_2}\right)\Psi \dx\,,\\
\tilde{a}_{\Phi \Phi}^{n} (\Phi, \Psi) &= \int_{\Omega} \frac{2 \xi^2 \Phi}{1 - \sqrt{Q(\bfa{u}^{n},\Phi^{n})}} \Psi \dx\,,\\
a_{\Phi \Phi}^{n} (\Phi,\Psi) &= \int_{\Omega} \frac{\alpha^2}{1 - \sqrt{Q(\bfa{u}^{n},\Phi^{n})}} \frac{\partial \Phi}{\partial x_i}\frac{\partial \Psi}{\partial x_i} \dx\,,
\end{split}
\end{align}
and
\begin{align}\label{bllforms2}
\begin{split}
L_{\bfa{u}}(\bfa{v}) &= \int_{\Omega} f_j v_j \dx\,,\\
L_{\Phi}(\Psi) &= \int_{\Omega} g_3 \Psi \dx\,,
\end{split}
\end{align}
where $Q(\bfa{\chi},\bfa{\gamma})$ is computed by \eqref{qu}.

Employing these forms \eqref{bllforms}--\eqref{bllforms2}, the index notation in \eqref{rel2a}--\eqref{rel2b} and \eqref{kinematic}, we obtain the following variational problem from \eqref{vbofmtvb}: find $\bfa{u}_h^{n+1} \in \bfa{V}_h\,, \Phi_h^{n+1} \in V_h\,,$ such that
\begin{align}\label{vbofm}
\begin{split}
a_{\bfa{u} \bfa{u}}^{n} (\bfa{u}^{n + 1},\bfa{v}) + a_{\bfa{u} \Phi}^{n} (\Phi^{n + 1},\bfa{v}) & = L_{\bfa{u}}(\bfa{v})\,, \quad \mbox{ for all } \bfa{v} \in \bfa{V}_h\,, j=1,2\,,\\
a_{\Phi \bfa{u}}^{n} (\bfa{u}^{n + 1}, \Psi) + \tilde{a}_{\Phi \Phi}^{n} (\Phi^{n + 1}, \Psi) + a_{\Phi \Phi}^{n} (\Phi^{n + 1},\Psi) &= L_{\Phi}(\Psi)\,, \quad \mbox{ for all } \Psi \in V_h\,.
\end{split}
\end{align}
The variational problem \eqref{vbofm} can be represented in a coupled form. To that end, for $\bfa{\Upsilon} = (\bfa{u}, \Phi) \in \bfa{\mathcal{V}}_h$ and $\bfa{\nu} = (\bfa{v}, \Psi) \in \bfa{\mathcal{V}}_h\,,$ let us define the following operators:
\begin{align}\label{bin}
 \begin{split}
b_n(\bfa{\Upsilon},\bfa{\nu}) &= a_{\bfa{uu}}^n (\bfa{u}, \bfa{v}) + a_{\bfa{u}\Phi}^n (\Phi, \bfa{v})\,,\\
c_n(\bfa{\Upsilon},\bfa{\nu}) & = a_{\Phi\bfa{u}}^n (\bfa{u}, \Psi) + \tilde{a}_{\Phi\Phi}^n (\Phi, \Psi) + 
a_{\Phi\Phi}^n (\Phi, \Psi)\,,\\
a_n (\bfa{\Upsilon},\bfa{\nu}) &= b_n (\bfa{\Upsilon},\bfa{\nu}) + c_n (\bfa{\Upsilon},\bfa{\nu})\,,\\
F(\bfa{\nu}) &= L_{\bfa{u}}(\bfa{v}) + L_{\Phi}(\Psi)\,.
 \end{split}
 \end{align}
At the $n$th Picard iterative step, we equip the space $\bfa{\mathcal{V}}_h$ with the norm
\begin{equation}\label{annorm} \norm{\cdot}_{\bfa{\mathcal{V}}_h} = \sqrt{a_n (\cdot, \cdot)}\,.
\end{equation}

Then, we reach the following variational problem: given $\bfa{\Upsilon}^{n} \in \bfa{\mathcal{V}}_h\,,$ find $\bfa{\Upsilon}^{n + 1} \in \bfa{\mathcal{V}}_h$ such that for all $\bfa{\nu} \in \bfa{\mathcal{V}}_h\,,$ 
\begin{equation}\label{absvar}
a_n (\bfa{\Upsilon}^{n + 1}, \bfa{\nu}) = F(\bfa{\nu})\,.
\end{equation}
%
In this study, we propose a halting indicator based on the relative successive difference, which indicates that provided a user-defined tolerance $\delta_0 > 0\,,$ if \begin{equation}\label{pct} \dfrac{\|\bfa{\Upsilon}^{n+1} - \bfa{\Upsilon}^{n} \|_{\bfa{L}^2(\Omega)}}{\| \bfa{\Upsilon}^{n} \|_{\bfa{L}^2(\Omega)}} \leq \delta_0\,, \end{equation}
then the iteration process is completed.  The selection of $\delta_0 = 10^{-5}$ is made in Section \ref{num_results}.
Note that in the fine grid, $\bfa{\Upsilon}^{n+1}$ represents the fine-scale solution $\bfa{\Upsilon}_h^{n+1}\,.$

\section{Coarse-grid approximation using the GMsFEM}\label{approx}
\subsection{Overview}\label{overview}

We aim to construct the offline and online multiscale spaces using the generalized multiscale finite element method (GMsFEM \cite{G1}) for the coupled nonlinear system \eqref{vbofmtvb}.  Inspired by \cite{gne, gnone, tfcmm}, respecting the nonlinearity, we may build those multiscale spaces based on the linearized formulation \eqref{absvar}, where the nonlinearity can be considered constant at each Picard iteration step.

First, we go over the coarse-grid notation. Let $\mathcal{T}^H$ be a coarse grid consisting of coarse cells (coarse blocks) $K_j^H$, where $j = 1 ,..., N_c^H\,,$ and $N_c^H$ is the number of all coarse blocks (see Fig.~\ref{fig:gmsfem_grid}). Let $N^{H}_v$ denote the number of all vertices (including the ones on the boundary) of $\mathcal{T}^H\,.$  
We assume that 
the coarse-grid size $H$ (where $H \gg h >0$) 
allows for efficient computations, but such resolution does not take into account the variation of the high-contrast multiscale coefficients  $\xi\,, \alpha\,,$ and $\beta\,.$ This is a typical situation in the application of various multiscale finite element methods. We note that all computations of multiscale basis functions take place in local domains (or coarse neighborhoods) $\omega_i$ of the coarse node $\bfa{x}_i^H$ defined as the union of all coarse blocks $K_j^H \in \mathcal{T}^H$ containing $\bfa{x}_i^H$ (see also Fig.~\ref{fig:gmsfem_grid}): 
\begin{equation}\label{cgrid}
\omega_i = \bigcup \left\{K_j^H \in \mathcal{T}^H: \bfa{x}_i^H \in \overline{{K}_j^H}\right\}\,, \quad (i = 1, ..., N_v^H)\,.
\end{equation}

We refer the readers to \cite{G1, G2, chungres1, chungres, chung2016adaptive} for the GMsFEM's specifics and to \cite{mcl,gne, mcontinua17} for a summary. 
 Our main objective is to use the GMsFEM to find a multiscale solution $\bfa{\Upsilon}_{\tu{ms}}\,,$ which is a good approximation of the fine-scale solution $\bfa{\Upsilon}_{h}$ from \eqref{absvar}. To that end, at each Picard iteration, we employ the GMsFEM on a coarse grid to solve local problems (to be described then) in each coarse neighborhood. This allows us to systematically generate multiscale basis functions (degrees of freedom for the solution) that retain fine-scale characteristics.  
It can be seen that the creation of local multiscale basis functions is a crucial part of the GMsFEM. Similar to \cite{gne}, first, only the so-called offline multiscale basis functions 
(which are computed during the offline stage) will be produced and used.  Second, based on such offline basis functions and a few local residuals, we will build additional online multiscale basis functions that are problem-dependent and computed locally and adaptively 
in order to increase the multiscale approximation's accuracy. 
When offline and online basis functions are combined, as with \cite{chungres,gne}, our findings demonstrate that the multiscale solution $\bfa{\Upsilon}_{\tu{ms}}$ will rapidly converge to the fine-scale solution $\bfa{\Upsilon}_{h}\,.$

\begin{figure}[hbt!]
\centering
\includegraphics[width=0.6\textwidth]{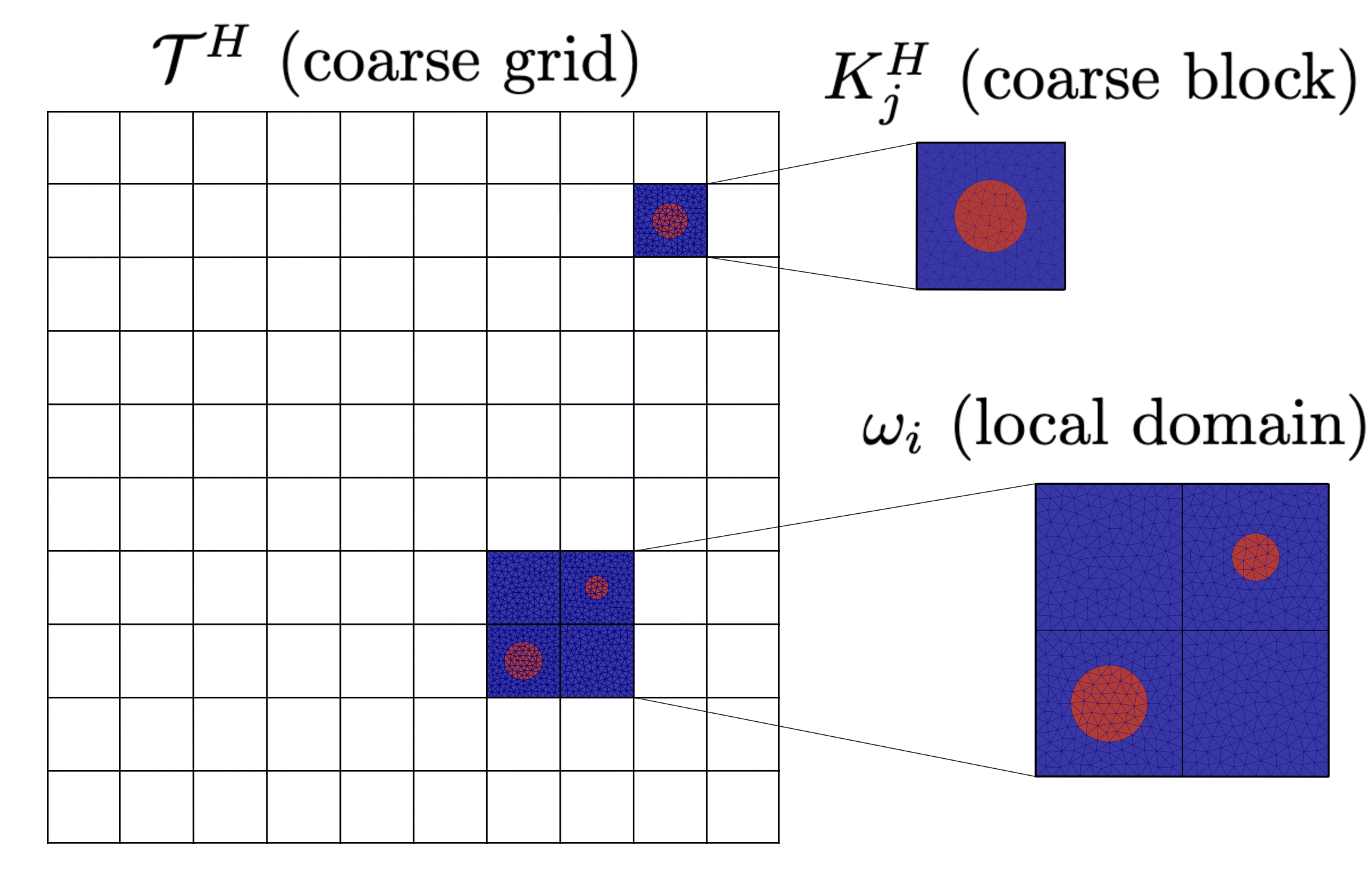}
\caption{Coarse grid $\mathcal{T}^H$, coarse block $K_j^H$, and local domain (coarse neighborhood) $\omega_i\,.$}
\label{fig:gmsfem_grid}
\end{figure}

\subsection{Offline GMsFEM}\label{offbuild}

 We investigate the offline GMsFEM for the 2D strain-limiting Cosserat elasticity model \eqref{bof}--\eqref{bom}. That is, in the next Sections  \ref{snaps}, \ref{mssp}, given $\bfa{\Upsilon}^{n}_{\tu{off}}=(\bfa{u}_{\tu{off}}^n, \Phi_{\tu{off}}^n)$ at the $n$th Picard iteration, we will construct $\bfa{\mathcal{V}}_{\tu{off}}=\bfa{\mathcal{V}}^n_{\tu{off}}$ at the offline stage.    In practice (see Section \ref{offgms}), provided an initial guess $\bfa{\Upsilon}^{0}_{\tu{off}}=(\bfa{u}_{\tu{off}}^0, \Phi_{\tu{off}}^0)\,,$ we will only have to build one $\bfa{\mathcal{V}}_{\tu{off}}=\bfa{\mathcal{V}}^0_{\tu{off}}\,.$ 
 Since this model is coupled, the multiscale basis functions will be established in a coupled way. The resulting coupled basis functions, on average, can provide better accuracy (in comparison with the uncoupled multiscale basis functions) due to considering the mutual influence of the solution fields \cite{mcl, rtt21, cosserat2022, ammosov2024computational}. The algorithm to compute the GMsFEM solution is presented in Section \ref{offgms}.

\subsubsection{Snapshot spaces}\label{snaps} 
The first step of the GMsFEM is constructing snapshot spaces. These spaces are built by solving local problems having different boundary conditions, which allows us to consider models possessing a broad class of boundary conditions and right-hand sides. 
 The details are as follows.  

We denote by $J_h(\omega_i)$ the set of all fine-grid nodes on $\partial \omega_i$ and by $N_v^{\partial \omega_i}$ the count of such nodes. Note that, in $\omega_i\,,$ the integration domain of the operators \eqref{bllforms}--\eqref{bllforms2} and \eqref{bin} will also be $\omega_i\,.$ Our fine-grid approximation (FEM) space is $\bfa{\mathcal{V}}_h(w_i)\,,$ as the conforming space $\bfa{\mathcal{V}}_h$ restricted to the local domain $w_i\,.$  Let $\delta_{k}$ be a function defined as
\begin{equation}\label{del1}
\delta_{k}(\bfa{x}^h_m) =
 \begin{cases}
  1 \quad m = k\,,\\
  0 \quad m \ne k\,,
 \end{cases}
\end{equation}
for all $\bfa{x}^h_m$ in $J_h(\omega_i)\,,$ $1 \leq k \leq N_v^{\partial w_i}\,.$

Toward establishing the snapshot spaces, we need to solve the following local problems on each coarse neighborhood $w_i\,,$ using the balance equations \eqref{bof}--\eqref{bom}.
In particular, at the Picard iterative step $n$th, for $i=1,...,N_v^H$, $k = 1,...,N_v^{\partial \omega_i}$, and $l = 1, 2, 3$, find the snapshot function ${\boldsymbol{\eta}}_k^{l, \omega_i} = (\boldsymbol{\varrho}_k^{l, \omega_i}, \theta_k^{l, \omega_i}) \in \bfa{\mathcal{V}}_h(w_i)$ such that

\begin{equation}\label{snapshotsys_strong}
\begin{split}
\frac{\partial}{\partial x_s}\left(\frac{\xi^2 [(\varrho_k^{l, \omega_i})_{j,s}+ \bfa{\e}_{js} \theta_k^{l, \omega_i}]}{1 - \sqrt{Q(\bfa{u}^{n}_{\tu{off}},\Phi^{n}_{\tu{off}})}} \right) &= 0 \quad \text{in } \omega_i\,,\\
\frac{\bfa{\e}_{sj} \xi^2 [ (\varrho_k^{l, \omega_i})_{j,s} + \bfa{\e}_{js} \theta_k^{l, \omega_i}]}{1 - \sqrt{Q(\bfa{u}^{n}_{\tu{off}},\Phi^{n}_{\tu{off}})}} + \frac{\partial}{\partial x_s}\left(\frac{\alpha^2 (\theta_k^{l, \omega_i})_{,s}}{1 - \sqrt{Q(\bfa{u}^{n}_{\tu{off}},\Phi^{n}_{\tu{off}})}} \right) &= 0 \quad \text{in } \omega_i\,,\\
{\boldsymbol{\eta}}_k^{l, \omega_i} &= {\boldsymbol{\delta}}_k^l  \quad \text{on } \partial \omega_i\,.
\end{split}
\end{equation}
Here, we specify $\bfa{\delta}^l_k$ as
\begin{equation}\label{del2}
\bfa{\delta}_k^l (\bfa{x}^h_m) = \delta_{k}(\bfa{x}^h_m) {\bfa{e}}_l\,, 
\end{equation}
where $\bfa{x}_m^h \in J_h(\omega_i)\,,$ the function $\delta_k$ was defined in \eqref{del1}, and $\{\bfa{e}_l \, | \, l=1,2,3\}$ is a standard basis in $\mathbb{R}^3$ (that is, ${\boldsymbol{e}}_l$ is the $l$th column of the identity matrix ${\boldsymbol{I}}_{3}$). In this way, we need to solve $3  N_v^{\partial \omega_i}$ local problems. Using the obtained solutions from \eqref{snapshotsys_strong}, for $i = 1,..., N_v^H$ and $l = 1,2,3\,,$ we get the local snapshot spaces
\begin{equation}\label{eq:coupled_snapshot_space}
\bfa{\mathcal{V}}_{\tu{snap}}^{l, \omega_i} = \mbox{span}\{{\boldsymbol{\eta}}_k^{l, \omega_i}\, | \, k = 1, ..., N_v^{\partial \omega_i}\}\,.
\end{equation}

\subsubsection{Multiscale space}\label{mssp} To construct multiscale basis functions, we need to perform a local spectral decomposition of the snapshot spaces by solving the following problems:
for $i=1,...,N_v^H$ and $l = 1, 2, 3$, find $\lambda_k^{l, \omega_i} \in \mathbb{R}$ and $\tilde{{\bfa{\eta}}}_k^{l, \omega_i} \in \bfa{\mathcal{V}}_{\tu{snap}}^{l, \omega_i}$ such that
\begin{equation}
a_n (\tilde{{\bfa{\eta}}}_k^{l, \omega_i}, \bfa{\nu}) = \lambda_k^{l, \omega_i} s_n (\tilde{{\bfa{\eta}}}_k^{l, \omega_i}, \bfa{\nu})\,, \quad \text{for all } \bfa{\nu} = (\bfa{v}, \Psi) \in \bfa{\mathcal{V}}_{\tu{snap}}^{l, \omega_i}\,, 
\end{equation}
where the operator $a_n$ was defined in \eqref{bin}, and for all $\bfa{\eta} = (\bfa{\varrho}, \theta), \, \bfa{\nu} = (\bfa{v}, \Psi) \in \bfa{\mathcal{V}}_{\tu{snap}}^{l, \omega_i}\,,$
\begin{equation*}
s_n(\bfa{\eta}, \bfa{\nu}) = \int_{\omega_i} \frac{\xi^2}{1 - \sqrt{Q(\bfa{u}^{n}_{\tu{off}},\Phi^{n}_{\tu{off}})}} \varrho_{j} v_{j} \dx + \int_{\omega_i} \frac{\alpha^2}{1 - \sqrt{Q(\bfa{u}^{n}_{\tu{off}},\Phi^{n}_{\tu{off}})}} \theta \Psi \dx\,.
\end{equation*}

Next, we sort the eigenvalues in ascending order, then, select the eigenfunctions corresponding to the first $N_{\tu{b}}$ eigenvalues and name these eigenfunctions as $\bar{{\bfa{\eta}}}_k^{l, \omega_i}\,,$ for $1\leq k \leq N_{\tu{b}}\,.$ These eigenfunctions are taken to be multiscale basis functions. For this reason, they are also called spectral basis functions. However, we need the multiscale partition of unity \cite{gne,ammosov2024computational} to ensure their conformality.

Since the considered model is coupled, this partition of unity functions is created in a coupled way \cite{ammosov2024computational}. For this purpose, we solve the following local problems on coarse-grid cells. With $i=1,...,N_v^H$, $l = 1, 2, 3$, and $\forall K \in \omega_i$, find $\bfa{\chi}_i^l = (\bfa{\zeta}_i^l, \mu_i^l)$ such that
\begin{equation}\label{mpou_strong}
\begin{split}
\frac{\partial}{\partial x_s}\left(\frac{\xi^2 [(\zeta_i^{l})_{j,s}+ \bfa{\e}_{js} \mu_i^{l}]}{1 - \sqrt{Q(\bfa{u}^{n}_{\tu{off}},\Phi^{n}_{\tu{off}})}} \right) &= 0 \quad \text{in } K\,,\\
\frac{\bfa{\e}_{sj} \xi^2 [ (\zeta_i^{l})_{j,s} + \bfa{\e}_{js} \mu_i^{l}]}{1 - \sqrt{Q(\bfa{u}^{n}_{\tu{off}},\Phi^{n}_{\tu{off}})}} + \frac{\partial}{\partial x_s}\left(\frac{\alpha^2 (\mu_i^{l})_{,s}}{1 - \sqrt{Q(\bfa{u}^{n}_{\tu{off}},\Phi^{n}_{\tu{off}})}} \right) &= 0 \quad \text{in } K\,,\\
\boldsymbol{\chi}_i^l &= \boldsymbol{g}_i^l \quad \text{on } \partial K\,,
\end{split}
\end{equation}
where $\boldsymbol{g}_i^l = \bar{\chi}_i \boldsymbol{e}_l$, $\bar{\chi}_i$ is a standard partition of unity function, which is continuous and linear in $\omega_i\,,$ equal to $1$ at $\bfa{x}_i^H$ and $0$ at all the other coarse-grid nodes of $w_i\,.$ Then, we assemble the multiscale partition of unity $\bfa{\chi}_i$ as follows
\begin{equation*}
\boldsymbol{\chi}_i = (\zeta_{i1}^{1}, \zeta_{i2}^{2}, \mu_{i}^{3}), \quad
i = 1,...,N_v^H\,.
\end{equation*}

Next, we multiply the obtained eigenfunctions by these multiscale partition of unity functions to obtain the local multiscale basis functions
\begin{equation}\label{offfunc}
{\boldsymbol{\eta}}_{ik}^{l,\tu{off}} = \boldsymbol{\chi}_i \bar{{\boldsymbol{\eta}}}_k^{l, \omega_i}, \quad i = 1,...,N_v^H, \quad k = 1, ..., N_{\tu{b}}, \quad l = 1, 2, 3\,.
\end{equation}
Finally, we establish the global multiscale space 
\begin{equation}\label{offsp}
\bfa{\mathcal{V}}_{\tu{off}} = \mbox{span}\{{\boldsymbol{\eta}}_{ik}^{l,\tu{off}}: i = 1, ..., N_v^H, \quad k = 1, ..., N_{\tu{b}}, \quad l=1, 2, 3\}\,.
\end{equation}
Since constructing these basis functions and function space takes place before solving our main problem (online stage), we will call \eqref{offfunc} the offline multiscale basis functions and \eqref{offsp} the offline multiscale function space, respectively.

\subsubsection{Offline GMsFEM for strain-limiting Cosserat elasticity}\label{offgms}
Let us consider an algorithm for solving the 2D strain-limiting Cosserat problem in the offline multiscale space $\bfa{\mathcal{V}}_{\tu{off}} = \bfa{\mathcal{V}}^0_{\tu{off}}$ (computed from Section \ref{mssp}).  Choosing $\delta_0 \in \mathbb{R}_{+}$ and guessing an initial $\bfa{\Upsilon}^{0}_{\tu{off}}=(\bfa{u}_{\tu{off}}^0, \Phi_{\tu{off}}^0) \in \bfa{\mathcal{V}}_{\tu{off}}\,,$ we iterate as follows:

\textbf{Step 1:} Provided $\bfa{\Upsilon}^{n}_{\tu{off}} \in \bfa{\mathcal{V}}_{\tu{off}}\,,$ solve for $\bfa{\Upsilon}^{n + 1}_{\tu{off}} \in \bfa{\mathcal{V}}_{\tu{off}}$ from a similar formulation to \eqref{absvar}:
\begin{equation}\label{solmsoff}
a_n (\bfa{\Upsilon}^{n + 1}_{\tu{off}}, \bfa{\nu}) = F(\bfa{\nu}), \quad \text{for all } \bfa{\nu} \in \bfa{\mathcal{V}}_{\tu{off}}\,.
\end{equation}

\textbf{Step 2:} If $\dfrac{\norm{\bfa{\Upsilon}_{\tu{off}}^{n + 1} - \bfa{\Upsilon}_{\tu{off}}^{n}}_{\bfa{\mathcal{V}}_h}}{\norm{\bfa{\Upsilon}_{\tu{off}}^{n}}_{\bfa{\mathcal{V}}_h}} > \delta_0$ (where $\norm{\cdot}_{\bfa{\mathcal{V}}_h} = \sqrt{a_n (\cdot, \cdot)}$ defined in \eqref{annorm}), then set $n \gets n+1$ and go to \textbf{Step 1}.

\bigskip

In our numerical implementation, we use the first-order finite elements on the fine grid $\mathcal{T}^h$ to compute the multiscale basis functions $\bfa{\eta}_{ik}^{l,\tu{off}}$ \eqref{offfunc} of $\bfa{\mathcal{V}}_{\tu{off}}$ \eqref{offsp}. Hence, we can define
\begin{equation*}
\scriptstyle
\bfa{R}_{\tu{off}} = \left [\bfa{\eta}_{11}^{1,\tu{off}}, \ldots , \bfa{\eta}_{1 N_{\tu{b}}}^{1, \tu{off}},
\bfa{\eta}_{2 1}^{1, \tu{off}}, \ldots, \bfa{\eta}_{2 N_{\tu{b}}}^{1, \tu{off}},
\ldots,
\bfa{\eta}_{N_v^H 1}^{1, \tu{off}}, \ldots, \bfa{\eta}_{N_v^H N_{\tu{b}}}^{1, \tu{off}},
\ldots,
\bfa{\eta}_{11}^{3,\tu{off}}, \ldots , \bfa{\eta}_{1 N_{\tu{b}}}^{3, \tu{off}},
\bfa{\eta}_{2 1}^{3, \tu{off}}, \ldots, \bfa{\eta}_{2 N_{\tu{b}}}^{3, \tu{off}},
\ldots,
\bfa{\eta}_{N_v^H 1}^{3, \tu{off}}, \ldots, \bfa{\eta}_{N_v^H N_{\tu{b}}}^{3, \tu{off}}\right]\,,
\end{equation*}
where each multiscale basis function $\bfa{\eta}_{ik}^{l,\tu{off}}$ \eqref{offfunc} is in the form of fine-scale coordinate representation arranged in column vector. Thus, for the solution of the form $\bfa{\Upsilon}_{\tu{off}}=(u_1, u_2, \Phi)\,,$ we first put the basis functions for $u_1$ on the first local domain, then do the same for other local domains, and after the last local domain, we move to the basis functions for $u_2\,,$ then repeat the process until we include all the basis functions for $\Phi\,.$  Using $\bfa{R}_{\tu{off}}\,,$ the coarse-scale solution $\bfa{\Upsilon}_{\tu{off}}^{c, n+1}$ can be found as $\bfa{\Upsilon}_{\tu{off}}^{c, n+1} = (\bfa{R}_{\tu{off}}^{\text{T}} \bfa{A}_h^n \bfa{R}_{\tu{off}})^{-1}(\bfa{R}_{\tu{off}}^{\text{T}} \bfa{b}_h ) \in \bfa{\mathcal{V}}_{\tu{off}}\,.$ Here, $a_n (\bfa{\Upsilon}, \bfa{\nu})$ and $F(\bfa{\nu})$ (defined in \eqref{bin}) respectively give the fine-grid matrix and right-hand side vector  $\bfa{A}_h^n$ and $\bfa{b}_h\,,$ obtained by standard FEM piecewise linear basis functions. Note that we can project $\bfa{\Upsilon}_{\tu{off}}^{c,n + 1}$ onto $\bfa{\mathcal{V}}_h$ using $\bfa{R}_{\tu{off}}$ as follows $\bfa{\Upsilon}_{\tu{off}}^{f, n+1}$ = $\bfa{R}_{\tu{off}} \bfa{\Upsilon}_{\tu{off}}^{c, n+1}$.
\section{Residual-based online adaptive basis enrichment for GMsFEM}\label{online}

As we have indicated in the previous Sections and as in \cite{gne}, to create a coarse representation of the fine-grid solution and to enable fast convergence of the adaptive enrichment technique, certain online basis functions are necessary.  Such online adaptivity is suggested and examined mathematically in \cite{chungres}.
 To be more precise, at the current $n$th Picard iterative step, if the local residual associated with some coarse neighborhood $w_i$ is big (refer to Section \ref{onalg}), a new basis function $\bfa{\eta}_i^{\tu{on}} \in \bfa{\mathcal{V}}_i = V_i^3 =(H_0^1(w_i) \cap V_h)^3$ (defined in Section \ref{dualway2}) can be constructed at the online stage (using the equipped norm $\|\cdot \|_{\bfa{\mathcal{V}}_i}$ defined in \eqref{anorm}), and added to the multiscale basis functions space.  It is further demonstrated that the online basis construction yields an efficient approximation of the fine-scale solution $\bfa{u}_h$ if the offline space has offline basis functions that contain sufficient information.
 
We utilize the index $m\text{ } (\geq 0)$ to indicate the adaptive enrichment level within the $n$th Picard iteration. As a result, $\bfa{\mathcal{V}}^m_{\tu{ms}}$ stands for the corresponding GMsFEM space, and $\bfa{\Upsilon}^m_{\tu{ms}}$ denotes the corresponding solution found in (\ref{adapeq}).  In our numerical results, the sequence $\{\bfa{\Upsilon}^m_{\tu{ms}}\}_{m\geq 0}$ will eventually converge to the fine-scale solution $\bfa{\Upsilon}_h\,.$  We note in this section that offline and online basis functions can be contained in the space $\bfa{\mathcal{V}}^m_{\tu{ms}}$.  A method for deriving the space $\bfa{\mathcal{V}}^{m+1}_{\tu{ms}}$ from $\bfa{\mathcal{V}}^m_{\tu{ms}}$ will be established.

Now, following \cite{gne}, we develop online multiscale approaches based on the residual-driven online generalized multiscale finite element method \cite{chungres}.  The main idea of this method is to build only online basis functions during the adaptively iterative procedure, as opposed to offline basis functions, which are created ahead of the iterative procedure.  Here, adaptivity means adding online basis functions to a select few regions, and ``adaptive'' can stand for either ``uniform'' or ``adaptive''.  With the current multiscale solution $\bfa{\Upsilon}^m_{\tu{ms}}$ from \eqref{adapeq}, some local residuals are used to compute the online basis functions. Thus, we understand that certain offline basis functions play a vital role in the online basis function constructions.  In addition, we will acquire the necessary quantity of offline basis functions to reach a quickly and robustly convergent sequence of solutions.  

More specifically, at the current Picard iterative step $n$th, we are provided with $\bfa{\Upsilon}^n_{\tu{ms}}=(\bfa{u}^n_{\tu{ms}},\Phi^n_{\tu{ms}})$ (see Section \ref{onpi}), a local domain $w_i\,,$ and an inner adaptive iteration $m$th using the approximation space $\bfa{\mathcal{V}}^m_{\tu{ms}}\,.$  One can obtain the GMsFEM solution $\bfa{\Upsilon}^m_{\tu{ms}} = (\bfa{u}^m_{\tu{ms}}, \Phi^m_{\tu{ms}})\in \bfa{\mathcal{V}}^m_{\tu{ms}}$ from \eqref{adapeq}:
\begin{equation}
a_n (\bfa{\Upsilon}^m_{\tu{ms}}, \bfa{\nu}) = F(\bfa{\nu}), \quad \text{for all } \bfa{\nu} \in \bfa{\mathcal{V}}_{\tu{ms}}^m\,.
\end{equation}
Assume that a basis function $\bfa{\eta}_i^{\tu{on}} \in \bfa{\mathcal{V}}_i$ needs to be added to the $i$th local domain $w_i\,.$  To start, we set $\bfa{\mathcal{V}}^0_{\tu{ms}} = \bfa{\mathcal{V}}_{\tu{off}}\,.$ Let $\bfa{\mathcal{V}}^{m+1}_{\tu{ms}} = \bfa{\mathcal{V}}^m_{\tu{ms}} \oplus \tu{span}\{\bfa{\eta}_i^{\tu{on}}\}$ be the new approximation space, and the GMsFEM solution from \eqref{adapeq} be $\bfa{\Upsilon}^{m+1}_{\tu{ms}} \in \bfa{\mathcal{V}}^{m+1}_{\tu{ms}}\,.$  Consider the local residual
\begin{equation}
R^i(\bfa{\nu}) = F^i(\bfa{\nu}) - a_n^i(\bfa{\Upsilon}^m_{\tu{ms}}, \bfa{\nu})\,, \quad \forall \bfa{\nu} \in \bfa{\mathcal{V}}_i\,,
\end{equation}
where $a_n (\bfa{\Upsilon}, \bfa{\nu})$ and $F(\bfa{\nu})$ were defined in \eqref{bin}.
According to \cite{chungres}, the new online basis function $\bfa{\eta}_i^{\tu{on}} \in \bfa{\mathcal{V}}_i$ is the solution to
\begin{align}
a_n^i(\bfa{\eta}_i^{\tu{on}},\bfa{\nu})= R^i(\bfa{\nu})\,, \quad \forall \bfa{\nu} \in \bfa{\mathcal{V}}_i\,,
\end{align}
and the residual norm $\| R^i\|_{\bfa{\mathcal{V}}_i^*}= \|\bfa{\eta}_i^{\tu{on}}\|_{\bfa{\mathcal{V}}_i}\,,$ which provides a measurement of the amount of energy error reduction. In the present Picard iteration, the convergence of this method is discussed in \cite{chungres}. 

The adaptive approach's key point is identifying the coarse neighborhoods $w_i$ with the largest residuals $r_i=\| R^i\|_{\bfa{\mathcal{V}}_i^*}\,.$ Hence, we need to define the dual norm $\| \cdot \|_{\bfa{\mathcal{V}}_i^*}$ as follows.
\subsection{Approaches to compute the dual norm}\label{dualnorm}
We consider two ways for computing the dual norm $\| \cdot \|_{\bfa{\mathcal{V}}_i^*}\,.$  All the calculations are at the current $n$th Picard iteration, provided $\bfa{\Upsilon}^n_{\tu{ms}}=(\bfa{u}^n_{\tu{ms}},\Phi^n_{\tu{ms}})$ (refer to Section \ref{onpi}).

\subsubsection{First approach to compute the dual norm}\label{dualway1}
Over the coarse neighborhood $\omega_i\,,$ let $\bfa{V}_i = V_i^2 =(H_0^1(w_i) \cap V_h)^2\,,$ where $V_h$ was defined in \eqref{vh}.  Also in $w_i\,,$ with $\bfa{\Upsilon}^m_{\tu{ms}} = (\bfa{u}^m_{\tu{ms}}, \Phi^m_{\tu{ms}})\in \bfa{\mathcal{V}}^m_{\tu{ms}}$ from \eqref{adapeq}, and for all $\bfa{v} \in \bfa{V}_i\,,$ using the operators \eqref{bllforms}--\eqref{bllforms2}, we obtain the residuals as follows: 
\begin{align}\label{res1}
\begin{split}
(R^i_{\bfa{u}}(\bfa{v}))_1 &=  (L^i_{\bfa{u}}(\bfa{v}) -  a^{i,n}_{\bfa{u} \bfa{u}} (\bfa{u}^m_{\tu{ms}},\bfa{v}) - a^{i,n}_{\bfa{u} \Phi} (\Phi^m_{\tu{ms}},\bfa{v}))_1\\
&= \int_{w_i} \left( f_1 v_1  -  \frac{\xi^2}{1 - \sqrt{Q(\bfa{u}^n_{\tu{ms}},\Phi^n_{\tu{ms}})}} \frac{\partial u^m_{1,\tu{ms}}}{\partial x_k} \frac{\partial v_1}{\partial x_k}   - \frac{\xi^2 \bfa{\e}_{1k}\Phi^m_{\tu{ms}}}{1 - \sqrt{Q(\bfa{u}^n_{\tu{ms}},\Phi^n_{\tu{ms}})}} \, \frac{\partial v_1}{\partial x_k} \right)\dx\,, 
\end{split}
\end{align}
\begin{align}\label{res2}
\begin{split}
(R^i_{\bfa{u}}(\bfa{v}))_2 &=  (L^i_{\bfa{u}}(\bfa{v}) -  a^{i,n}_{\bfa{u} \bfa{u}} (\bfa{u}^m_{\tu{ms}},\bfa{v}) - a^{i,n}_{\bfa{u} \Phi} (\Phi^m_{\tu{ms}},\bfa{v}))_2\\
&= \int_{w_i} \left( f_2 v_2  -  \frac{\xi^2}{1 - \sqrt{Q(\bfa{u}^n_{\tu{ms}},\Phi^n_{\tu{ms}})}} \frac{\partial u^m_{2,\tu{ms}}}{\partial x_k} \frac{\partial v_2}{\partial x_k}   - \frac{\xi^2 \bfa{\e}_{2k}\Phi^m_{\tu{ms}}}{1 - \sqrt{Q(\bfa{u}^n_{\tu{ms}},\Phi^n_{\tu{ms}})}} \, \frac{\partial v_2}{\partial x_k} \right)\dx\,, 
\end{split}
\end{align}
and
\begin{align}\label{resphi}
\begin{split}
  R^i_{\Phi}(\Psi)&= L^i_{\Phi}(\Psi) - a^{i,n}_{\Phi \bfa{u}}(\bfa{u}^m_{\tu{ms}}, \Psi) - \tilde{a}^{i,n}_{\Phi \Phi} (\Phi^m_{\tu{ms}}, \Psi) - a^{i,n}_{\Phi \Phi}(\Phi^m_{\tu{ms}},\Psi)\\
  & = \int_{w_i} \left(g_3 \Psi - \frac{\bfa{\e}_{js} \xi^2 }{1 - \sqrt{Q(\bfa{u}^n_{\tu{ms}},\Phi^n_{\tu{ms}})}} \frac{\partial u^m_{j,\tu{ms}}}{\partial x_s} \Psi - \frac{2 \xi^2 \Phi^m_{\tu{ms}}}{1 - \sqrt{Q(\bfa{u}^n_{\tu{ms}},\Phi^n_{\tu{ms}})}} \Psi \right. \\ &\qquad \left.- \frac{\alpha^2}{1 - \sqrt{Q(\bfa{u}^n_{\tu{ms}},\Phi^n_{\tu{ms}})}} \frac{\partial \Phi^m_\tu{ms}}{\partial x_k}\frac{\partial \Psi}{\partial x_k}\right)\dx\,.
\end{split}
\end{align}
We denote the vector
\begin{equation}\label{resvec}
(R_1^i \quad R_2^i \quad R_3^i)^{\tu{T}} = ((R^i_{\bfa{u}})_1 \quad (R^i_{\bfa{u}})_2 \quad R^i_{\Phi})^{\tu{T}}\,. 
\end{equation}
Now, let $z_l \in V_i$ (for $l=1,2,3$) such that each $z_l$ be the unique weak solution of the Dirichlet Laplacian problem
\begin{align}\label{laplace}
\begin{split}
    -\Delta z_l &= R^i_l \quad \text{in } w_i\,,\\
    z_l &= 0 \quad \text{on } \partial w_i\,.
\end{split}    
\end{align}
Then, we get
\begin{equation}\label{dualnorm3}
r^i_l = \|R_l^i\|_{H^{-1}(w_i)} = \|z_l\|_{H^1_0(w_i)} = \left(\int_{w_i} |\nabla z_l|^2\right )^{1/2}= \left(\int_{w_i}   \sum_{k=1}^2 \Bigg |\frac{\partial z_l}{\partial x_k} \Bigg |^2 \dx\right)^{1/2} \,.
\end{equation}
Equivalently,
\begin{equation}\label{inverselap}
r^i_l = \|R_l^i\|_{H^{-1}(w_i)} = \left(\int_{w_i} ((-\Delta)^{-1} R_l^i) R_l^i \dx\right)^{1/2} \,.
\end{equation}
Finally, we obtain the residual
\begin{equation}\label{ressum}
r^i = \sqrt{(r^i_1)^2 + (r^i_2)^2 + (r_3^i)^2}= \sum_{l=1}^3 \|R_l^i\|_{H^{-1}(w_i)}\,.
\end{equation}

\subsubsection{Second approach to compute the dual norm}\label{dualway2}
Let us recall the function space $\bfa{\mathcal{V}}_i = V_i^3 =(H_0^1(w_i) \cap V_h)^3$ in the given coarse neighborhood $\omega_i\,,$ where $V_h$ was defined in \eqref{vh}. At the Picard iteration $n$th, with $\bfa{\nu} = (\bfa{v}, \Psi) \in \bfa{\mathcal{V}}_i\,,$ using the operator $a_n$ defined in \eqref{bin}, we equip the space $\bfa{\mathcal{V}}_i$ with the energy norm $\|\bfa{\nu}\|_{\bfa{\mathcal{V}}_i}$:
\begin{align}\label{anorm}
\begin{split}
\|\bfa{\nu}\|^2_{\bfa{\mathcal{V}}_i} &= a^i_n(\bfa{\nu},\bfa{\nu})\\
&= \int_{w_i} \left(   \frac{\xi^2}{1 - \sqrt{Q(\bfa{u}^n_{\tu{ms}},\Phi^n_{\tu{ms}})}} \left( \frac{\partial v_j}{\partial x_k} \right)^2  
+ \frac{\xi^2 \bfa{\e}_{jk}}{1 - \sqrt{Q(\bfa{u}^n_{\tu{ms}},\Phi^n_{\tu{ms}})}} \, \Psi \, \frac{\partial v_j}{\partial x_k} \right)\dx\\
& \quad + \int_{w_i} \left( \frac{\bfa{\e}_{js} \xi^2 }{1 - \sqrt{Q(\bfa{u}^n_{\tu{ms}},\Phi^n_{\tu{ms}})}} \, \Psi \, \frac{\partial v_j}{\partial x_s}  
+ \frac{2 \xi^2 }{1 - \sqrt{Q(\bfa{u}^n_{\tu{ms}},\Phi^n_{\tu{ms}})}} (\Psi)^2 \right.\\
& \left. \hspace{70pt} + \frac{\alpha^2}{1 - \sqrt{Q(\bfa{u}^n_{\tu{ms}},\Phi^n_{\tu{ms}})}} \left(\frac{\partial \Psi}{\partial x_k}\right)^2\right)\dx\,.
\end{split}
\end{align}

By Poincar{\'e} inequality, it follows from \eqref{anorm} that for all $\bfa{\nu} \in \bfa{\mathcal{V}}_i\,,$
\begin{align*}
\begin{split}
 \tilde{C}\| \nabla \bfa{\nu}\|^2_{\mathbb{L}^2(w_i)} \leq \|\bfa{\nu}\|^2_{\bfa{\mathcal{V}}_i} &= a^i_n(\bfa{\nu},\bfa{\nu}) \leq C  \| \nabla \bfa{\nu}\|^2_{\mathbb{L}^2(w_i)}\,, 
\end{split}  
\end{align*}
with some positive constants $C, \tilde{C}$ depending only on $w_i\,.$
Therefore, the norm $\|\bfa{\nu}\|_{\bfa{\mathcal{V}}_i}$ in \eqref{anorm} is equivalent to the norm 
\begin{equation}\label{equivnorms}
\| \bfa{\nu}\|_{\bfa{H}^1_0(w_i)} = \| \nabla \bfa{\nu}\|_{\mathbb{L}^2(w_i)} = \left(\int_{w_i} |\nabla \bfa{\nu}|^2\right )^{1/2}= \left(\int_{w_i}   \sum_{l=1}^3 \sum_{k=1}^2 \Bigg |\frac{\partial \nu_l}{\partial x_k} \Bigg |^2 \dx\right)^{1/2} \,,
\end{equation}
as in Section \ref{dualway1}.

Further, the norm $\|\bfa{\nu}\|_{\bfa{\mathcal{V}}_i}$ in \eqref{anorm} induces the following inner product (for all $\bfa{\Upsilon} = (\bfa{u}, \Phi)$ and $\bfa{\nu} = (\bfa{v},\Psi)$ in $\bfa{\mathcal{V}}_i$):
\begin{align}\label{aifor2}
\begin{split}
a_n^i(\bfa{\Upsilon},\bfa{\nu})
&=   (a^{i,n}_{\bfa{u} \bfa{u}} (\bfa{u},\bfa{v}) + a^{i,n}_{\bfa{u} \Phi} (\Phi,\bfa{v})) + (a^{i,n}_{\Phi \bfa{u}}(\bfa{u}, \Psi) + \tilde{a}^{i,n}_{\Phi \Phi} (\Phi, \Psi) + a^{i,n}_{\Phi \Phi}(\Phi,\Psi))\\
&= \int_{w_i} \left(   \frac{\xi^2}{1 - \sqrt{Q(\bfa{u}^n_{\tu{ms}},\Phi^n_{\tu{ms}})}} \frac{\partial u_j}{\partial x_k} \frac{\partial v_j}{\partial x_k}   + \frac{\xi^2 \bfa{\e}_{jk}\Phi}{1 - \sqrt{Q(\bfa{u}^n_{\tu{ms}},\Phi^n_{\tu{ms}})}} \, \frac{\partial v_j}{\partial x_k} \right)\dx\\
& \quad + \int_{w_i} \left( \frac{\bfa{\e}_{js} \xi^2 }{1 - \sqrt{Q(\bfa{u}^n_{\tu{ms}},\Phi^n_{\tu{ms}})}} \frac{\partial u_j}{\partial x_s} \Psi + \frac{2 \xi^2 \Phi}{1 - \sqrt{Q(\bfa{u}^n_{\tu{ms}},\Phi^n_{\tu{ms}})}} \Psi \right.\\
& \left. \hspace{70pt} + \frac{\alpha^2}{1 - \sqrt{Q(\bfa{u}^n_{\tu{ms}},\Phi^n_{\tu{ms}})}} \frac{\partial \Phi}{\partial x_k}\frac{\partial \Psi}{\partial x_k}\right)\dx\,.
\end{split}
\end{align}

Locally, at the coarse neighborhood $w_i\,,$ for all $\bfa{\nu} = (\bfa{v},\Psi)$ in $\bfa{\mathcal{V}}_i\,,$ we obtain the residual $R^i$ as a linear functional on $\bfa{\mathcal{V}}_i$:
\begin{align}\label{resa}
\begin{split}
R^i(\bfa{\nu})& = F^i(\bfa{\nu}) - a_n^i(\bfa{\Upsilon}^m_{\tu{ms}}, \bfa{\nu})\\
&=  (L^i_{\bfa{u}}(\bfa{v}) -  a^{i,n}_{\bfa{u} \bfa{u}} (\bfa{u}^m_{\tu{ms}},\bfa{v}) - a^{i,n}_{\bfa{u} \Phi} (\Phi^m_{\tu{ms}},\bfa{v})) \\
& \quad + (L^i_{\Phi}(\Psi) - a^{i,n}_{\Phi \bfa{u}}(\bfa{u}^m_{\tu{ms}}, \Psi) - \tilde{a}^{i,n}_{\Phi \Phi} (\Phi^m_{\tu{ms}}, \Psi) - a^{i,n}_{\Phi \Phi}(\Phi^m_{\tu{ms}},\Psi))\\
&= \int_{w_i} \left( f_j v_j  -  \frac{\xi^2}{1 - \sqrt{Q(\bfa{u}^n_{\tu{ms}},\Phi^n_{\tu{ms}})}} \frac{\partial u^m_{j,\tu{ms}}}{\partial x_k} \frac{\partial v_j}{\partial x_k}   - \frac{\xi^2 \bfa{\e}_{jk}\Phi^m_{\tu{ms}}}{1 - \sqrt{Q(\bfa{u}^n_{\tu{ms}},\Phi^n_{\tu{ms}})}} \, \frac{\partial v_j}{\partial x_k} \right)\dx\\
& \quad + \int_{w_i} \left(g_3 \Psi - \frac{\bfa{\e}_{js} \xi^2 }{1 - \sqrt{Q(\bfa{u}^n_{\tu{ms}},\Phi^n_{\tu{ms}})}} \frac{\partial u^m_{j,\tu{ms}}}{\partial x_s} \Psi - \frac{2 \xi^2 \Phi^m_{\tu{ms}}}{1 - \sqrt{Q(\bfa{u}^n_{\tu{ms}},\Phi^n_{\tu{ms}})}} \Psi \right.\\
& \left. \hspace{70pt} - \frac{\alpha^2}{1 - \sqrt{Q(\bfa{u}^n_{\tu{ms}},\Phi^n_{\tu{ms}})}} \frac{\partial \Phi^m_\tu{ms}}{\partial x_k}\frac{\partial \Psi}{\partial x_k}\right)\dx\,.
\end{split}
\end{align}

With the function $R^i(\bfa{\nu})$ defined in \eqref{resa}, to compute $\| R^i\|_{\bfa{\mathcal{V}}_i^*}\,,$ by Riesz representation theorem, we can first obtain the unique weak solution $\bfa{\eta}_i^{\tu{on}} \in \bfa{\mathcal{V}}_i$ from the problem 
\begin{align}\label{riesz2}
\begin{split}
a_n^i(\bfa{\eta}_i^{\tu{on}},\bfa{\nu})= R^i(\bfa{\nu})\,,
\end{split}
\end{align}
for all $\bfa{\nu} \in \bfa{\mathcal{V}}_i\,,$ using \eqref{aifor2}.

Then, the norm of $R^i$ is denoted by $r_i$ and calculated by
\begin{equation}\label{enorm2}
r_i = \| R^i\|_{\bfa{\mathcal{V}}_i^*}= \|\bfa{\eta}_i^{\tu{on}}\|_{\bfa{\mathcal{V}}_i}\,,
\end{equation}
which is defined in \eqref{anorm}.

\subsection{Online adaptive algorithm}\label{onalg}

Assume that we are at the Picard step $n$th (for $n=0,1,2, \cdots)$ and given $\bfa{\Upsilon}^n_{\tu{ms}}=(\bfa{u}^n_{\tu{ms}},\Phi^n_{\tu{ms}})$ (see Section \ref{onpi}). 
Set $m = 0$ and pick a parameter $\theta \in (0, 1]$. 
We denote $\bfa{\mathcal{V}}^m_{\tu{ms}} = \bfa{\mathcal{V}}^0_{\tu{ms}} = \bfa{\mathcal{V}}_{\tu{off}}$ (where the space notation $\bfa{\mathcal{V}}$ was defined at the beginning of Sections \ref{approx} and \ref{online}).  
For each $m \in \mathbb{N}\,,$ that is, $m=0,1,2, \cdots\,,$ assume that $\bfa{\mathcal{V}}^m_{\tu{ms}}$ is provided. 
To obtain the new multiscale finite element space $\bfa{\mathcal{V}}^{m+1}_{\tu{ms}}\,,$ we operate the following procedure. 

\textbf{Step 1:} Solve for the GMsFEM solution $\bfa{\Upsilon}^m_{\tu{ms}} = (\bfa{u}^m_{\tu{ms}}, \Phi^m_{\tu{ms}})\in \bfa{\mathcal{V}}^m_{\tu{ms}}$  from a similar equation to \eqref{absvar}:
\begin{equation}\label{adapeq}
a_n (\bfa{\Upsilon}^m_{\tu{ms}}, \bfa{\nu}) = F(\bfa{\nu}), \quad \text{for all } \bfa{\nu} \in \bfa{\mathcal{V}}_{\tu{ms}}^m\,.
\end{equation}

\textbf{Step 2:} For each $i = 1, ..., N_v\,,$ using \eqref{enorm2}, compute the residual $r_i$ on the coarse region $\omega_i\,.$ Assume that we obtain 
\begin{equation*}
r_1 \geq r_2 \geq \cdots \geq r_{N_v}\,,
\end{equation*}

\textbf{Step 3:} Choose the smallest integer $k_p$ such that

\begin{equation*}
\theta \sum_{i = 1}^{N_v} r_i^2 \leq \sum_{i = 1}^{k_p} r_i^2\,.
\end{equation*}

Now, for $i = 1, ..., k_p$, add basis functions $\boldsymbol{\eta}_i^{\tu{on}} =(\psi_{1,i}, \psi_{2,i}, \phi_i) \in \bfa{\mathcal{V}}_i$ (solved from \eqref{riesz2}) to the space $\bfa{\mathcal{V}}^m_{\tu{ms}}\,.$  The new multiscale space is defined by $\bfa{\mathcal{V}}^{m+1}_{\tu{ms}}$ in the following manner:

\begin{equation*}
\begin{split}
\bfa{\mathcal{V}}^{m+1}_{\tu{ms}} = \bfa{\mathcal{V}}^m_{\tu{ms}} \oplus \text{span} \{ \boldsymbol{\eta}_i^{\tu{on}}: 1 \leq i \leq k_p \}\,.
\end{split}
\end{equation*}
Let $N_{\tu{it}} = m+1\,.$

\textbf{Step 4:} If $N_{\tu{it}} \geq N_{\tu{max}}\,,$ then stop. Otherwise, set $m \leftarrow m + 1$ and go back to \textbf{Step 1}.


\begin{remark}\label{rmk_unif}
 The enrichment is considered ``uniform'' if $\theta =1\,.$ The term ``adaptive'' is also used for the situation $\theta=1$ in this work, depending on the context.
\end{remark}

\subsection{Online GMsFEM for strain-limiting Cosserat elasticity}\label{onpi}

We provide the main procedure for applying the GMsFEM to solve the planar strain-limiting Cosserat problem \eqref{bof}--\eqref{bom}: pick a basis update tolerance value $\delta \in \mathbb{R}_+$ and Picard iteration termination tolerance value $\delta_0 \in \mathbb{R}_+$. Also, we guess an initial guess $\bfa{\Upsilon}_{\tu{ms}}^{\tu{old}} = (\boldsymbol{u}_{\tu{ms}}^{\tu{old}}, \Phi_{\tu{ms}}^{\tu{old}})\,,$ compute $\kappa^{\tu{old}}(\boldsymbol{x}) = \dfrac{1}{1 - \sqrt{Q(\boldsymbol{u}_{\tu{ms}}^{\tu{old}}, \Phi_{\tu{ms}}^{\tu{\tu{old}}})}}$ and construct the multiscale space $\bfa{\mathcal{V}}^{\tu{pre}}_{\tu{ms}}$ (by the Online adaptive enrichment algorithm in Section \ref{onalg} starting from $n=0$). Then, we repeat the following steps:

\textbf{Step 1:} Solve for $\bfa{\Upsilon}_{\tu{ms}}^{\tu{new}} \in \bfa{\mathcal{V}}^{\tu{pre}}_{\tu{ms}}$ from the following equation (as \eqref{absvar}):

\begin{equation}\label{pieq}
a_{\tu{old}} (\bfa{\Upsilon}_{\tu{ms}}^{\tu{new}}, \boldsymbol{\nu}) = F(\boldsymbol{\nu})\,, \quad \forall \bfa{\nu} \in \bfa{\mathcal{V}}^{\tu{pre}}_{\tu{ms}}\,, 
\end{equation}
where $\bfa{\Upsilon}_{\tu{ms}}^{\tu{new}} = (\boldsymbol{u}_{\tu{ms}}^{\tu{new}}, \Phi_{\tu{ms}}^{\tu{new}})\,,$ $\boldsymbol{\nu} = (\boldsymbol{v}, \Psi)$, the operators $a_{\tu{old}}$ and $F$ were defined in \eqref{bin}. 

If $\dfrac{\| \boldsymbol{\Upsilon}_{\tu{ms}}^{\tu{new}} - \boldsymbol{\Upsilon}_{\tu{ms}}^{\tu{old}} \|_{\bfa{\mathcal{V}}_h}}{\| \boldsymbol{\Upsilon}_{\tu{ms}}^{\tu{old}} \|_{\bfa{\mathcal{V}}_h}} > \delta_0$ (with the norm $\| \cdot \|_{\bfa{\mathcal{V}}_h}$ defined in \eqref{annorm} and $\bfa{\mathcal{V}}_h$ defined in \eqref{vh}), let $\boldsymbol{\Upsilon}_{\tu{ms}}^{\tu{old}} = \boldsymbol{\Upsilon}_{\tu{ms}}^{\tu{new}}$ and go to \textbf{Step 2}.

\textbf{Step 2:} Calculate $\kappa^{\tu{new}}(\boldsymbol{x}) = \dfrac{1}{1 - \sqrt{Q(\boldsymbol{u}_{\tu{ms}}^{\tu{new}}, \Phi_{\tu{ms}}^{\tu{new}})}}$. If $\dfrac{\| \kappa^{\tu{new}}(\bfa{x}) - \kappa^{\tu{old}}(\bfa{x})\|_{L^2(\Omega)}}{\| \kappa^{\tu{old}}(\bfa{x}) \|_{L^2(\Omega)}} > \delta$, build the new space $\bfa{\mathcal{V}}_{\tu{ms}}^{\tu{af}}$ (by the Online adaptive algorithm in Section \ref{onalg}), let $\bfa{\mathcal{V}}_{\tu{ms}}^{\tu{pre}} = \bfa{\mathcal{V}}_{\tu{ms}}^{\tu{af}}\,.$  Set $\kappa^{\tu{old}}(\bfa{x}) = \kappa^{\tu{new}}(\bfa{x})\,,$ then go to \textbf{Step 1}.

\bigskip

The final multiscale basis function space is denoted by $\bfa{\mathcal{V}}_{\tu{ms}}\,.$


\section{Numerical results}\label{num_results}

With a number of test cases, this section demonstrates the performance of our multiscale approaches  for the planar strain-limiting Cosserat elasticity problems \eqref{bof}--\eqref{bom} in various media types. In particular, we consider perforated, composite, and heterogeneous media. Also, small and big strain-limiting parameters are taken into account. We solve these model problems using the proposed multiscale approaches, that is, offline, online uniform ($\theta=1$), and online adaptive ($\theta=0.8$) multiscale methods. Our computational domain is $\Omega = [0, 2]\times [0, 2]\,,$ having a uniform coarse grid with 100 rectangular cells, 121 vertices, and coarse-mesh size $H=1/5\,.$  The reference solutions are obtained by employing the finite element method (FEM) on fine grids, which will be specified below.

For all model problems, we fix the displacements and microrotations on the left, right, bottom, and top boundaries. In the case of perforated media, we set zero Neumann boundary conditions on the perforations. Thus, the boundary conditions are as follows (for $i,j=2$):
\begin{equation}\label{eq:num_results_bcs}
\begin{split}
&u_1 = 0, \quad u_2 = 0, \quad \Phi = 0, \quad \boldsymbol{x} \in \Gamma_D,\\
&\tau_{ij} n_i = 0, \quad m_{i3} n_i = 0, \quad \boldsymbol{x} \in \Omega \setminus \Gamma_D,
\end{split}
\end{equation}
where $\Gamma_D = \Gamma_L \cup \Gamma_R \cup \Gamma_B \cup \Gamma_T\,,$ $\partial \Omega = \Gamma_D \cup \Gamma_N\,,$ and $\bfa{u} = (u_1,u_2,0)\,, \bfa{\phi} = (0,0, \Phi)$ in \eqref{planar}.

In all types of media and strain-limiting cases, for $i=1,2\,,$  we use the source terms
\begin{equation}\label{eq:rhs}
f_i(\boldsymbol{x}) = f_v \sqrt{x_1^2 + x_2^2 + 1}, \quad
g_3(\boldsymbol{x}) = g_v \sqrt{x_1^2 + x_2^2 + 1}\,,
\end{equation}
where $f_v$ together with $g_v$ are model parameters and can be different in each problem.

As we mentioned in the previous sections, in order to handle the nonlinearity of our strain-limiting Cosserat model, the Picard iteration method is employed. The tolerance in the Picard stopping criterion is set to $\delta_0 = 10^{-5}\,,$ which can guarantee the linearization procedure's convergence. We choose $\boldsymbol{u}^0 = (10^{-6}, 10^{-6})$ and $\Phi^{0} = 10^{-6}$ as initial guesses. These settings of Picard's method are applied to all model problems.

Note that it seems not possible to find analytical solutions for nonlinear strain-limiting Cosserat elasticity problems in heterogeneous media.
%
Therefore, we use FEM solutions on fine grids as reference ones. At the final $(n+1)$th Picard iterative step, to compare the FEM reference $\bfa{\Upsilon}^{\tu{ref}} = (\bfa{u}^{\tu{ref}}, \Phi^{\tu{ref}})$ (as $\bfa{\Upsilon}_h$ from \eqref{absvar}) with the multiscale solutions $\bfa{\Upsilon}^{\tu{ms}} = (\bfa{u}^{\tu{ms}}, \Phi^{\tu{ms}})$ (as $\bfa{\Upsilon}_{\tu{off}}$ in \eqref{solmsoff} or $\bfa{\Upsilon}_{\tu{ms}}$ from \eqref{pieq}), we use the following relative $L^2$ errors
\begin{equation}\label{eq:L2_errors}
e^{u}_{L^2} = \frac{\sqrt{\int_\Omega (\boldsymbol{u}^{\tu{ref}} - \boldsymbol{u}^{\tu{ms}})^2 d\boldsymbol{x}}}{\sqrt{\int_\Omega (\boldsymbol{u}^{\tu{ref}})^2 d\boldsymbol{x}}}\,, \quad 
e^{\Phi}_{L^2} = \frac{\sqrt{\int_\Omega (\Phi^{\tu{ref}} - \Phi^{\tu{ms}})^2 d\boldsymbol{x}}}{\sqrt{\int_\Omega (\Phi^{\tu{ref}})^2 d\boldsymbol{x}}}\,,
\end{equation}
and relative $H^1$ errors
\begin{equation}\label{eq:H1_errors}
e^{u}_{H^1} = \frac{\sqrt{a_{\boldsymbol{uu}}^{n + 1} (\boldsymbol{u}^{\tu{ref}} - \boldsymbol{u}^{\tu{ms}}, \boldsymbol{u}^{\tu{ref}} - \boldsymbol{u}^{\tu{ms}})}}{\sqrt{a_{\boldsymbol{uu}}^{n + 1} (\boldsymbol{u}^{\tu{ref}}, \boldsymbol{u}^{\tu{ref}})}}\,, \quad 
e^{\Phi}_{H^1} = \frac{\sqrt{a_{\Phi\Phi}^{n + 1} (\Phi^{\tu{ref}} - \Phi^{\tu{ms}}, \Phi^{\tu{ref}} - \Phi^{\tu{ms}})}}{\sqrt{a_{\Phi\Phi}^{n + 1} (\Phi^{\tu{ref}}, \Phi^{\tu{ref}})}}\,,
\end{equation}
where the superscripts ``ref'' and ``ms'' denote the reference and multiscale solutions, respectively, and the bilinear forms $a_{\boldsymbol{uu}}^{n + 1}$ and $a_{\Phi \Phi}^{n + 1}$ are defined in \eqref{bllforms} using data from the final $(n+1)$th Picard step.

The numerical implementation of the fine-grid and multiscale methods is based on the FEniCS computational package \cite{logg2012automated}. For grid generation, we use the program Gmsh \cite{Geuzaine2009}. Visualization of numerical results is performed using the ParaView software \cite{ahrens2005paraview}.

\subsection{Perforated media}\label{perform}


\begin{figure}[!htbp]
    \centering
    \begin{subfigure}[b]{\textwidth}
        \centering
        \includegraphics[width=0.7\textwidth]{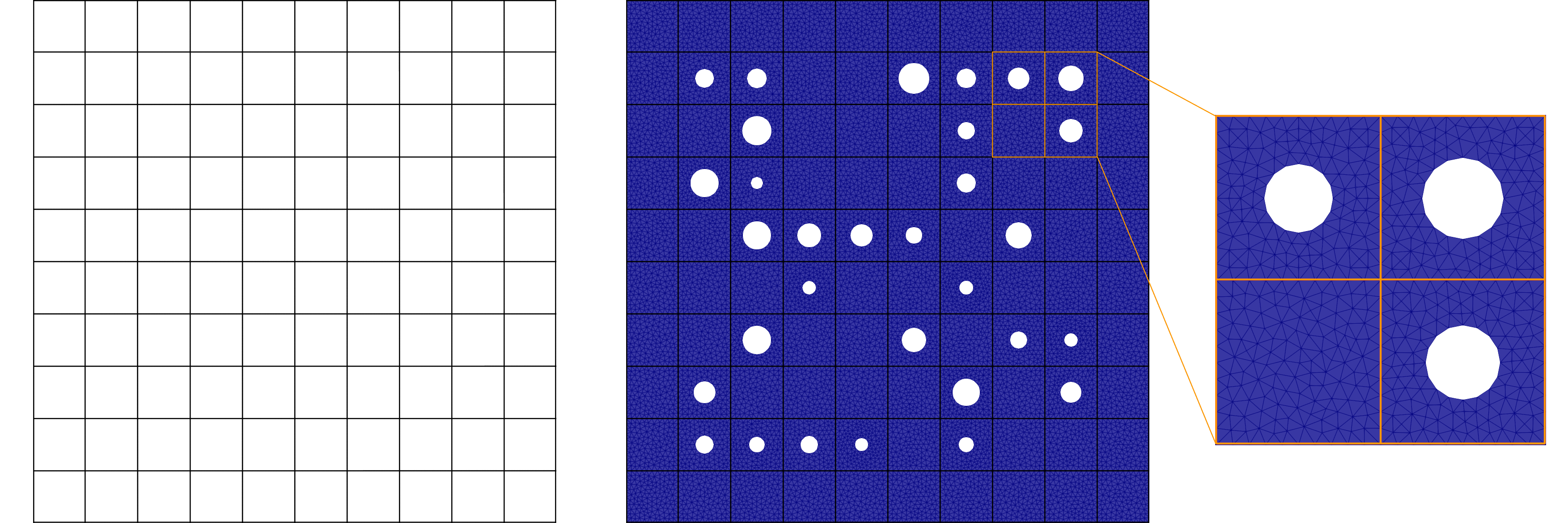}
        \caption{Case 1 (small strain-limiting parameter).}
        \label{fig:p_mesh_1}
    \end{subfigure}
    \begin{subfigure}[b]{\textwidth}
        \centering
        \includegraphics[width=0.7\textwidth]{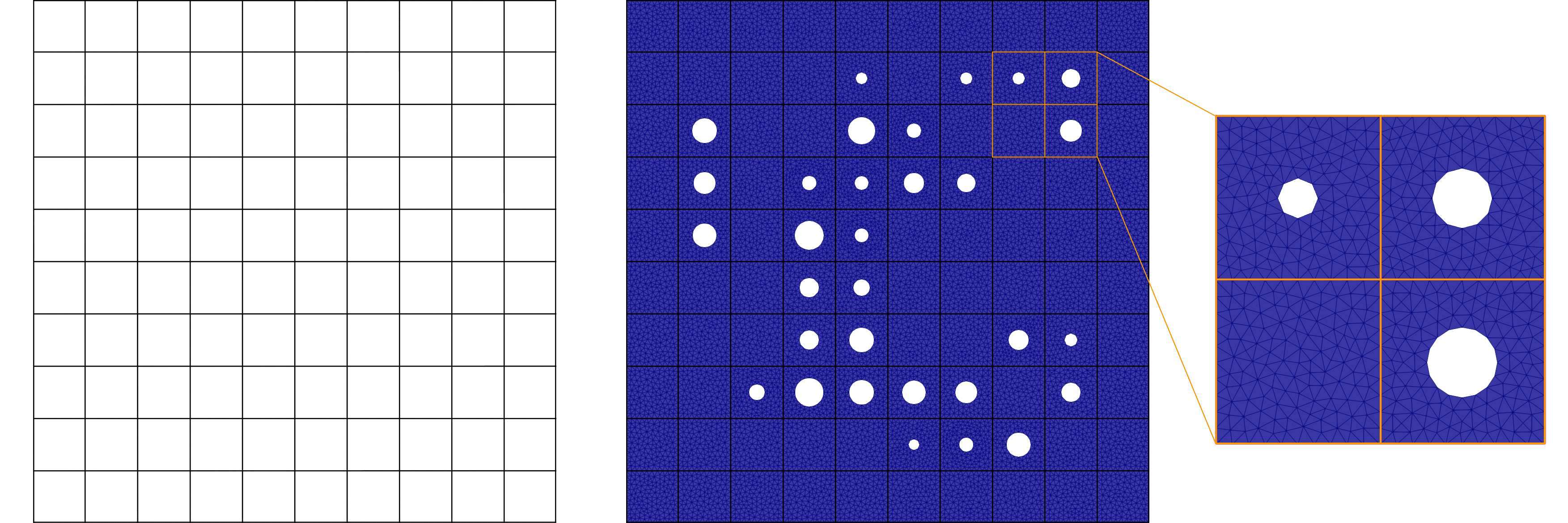}
        \caption{Case 2 (big strain-limiting parameter).}
        \label{fig:p_mesh_2}
    \end{subfigure}
    \caption{Computational grids for the perforated media: left -- the coarse grid (each small square is a coarse block), middle -- the fine grid, right -- a local domain.}
    \label{fig:p_meshes}
\end{figure}

Herein, we consider perforated Cosserat media.  
As mentioned earlier, we consider two cases of strain-limiting parameters: (1) small and (2) big. The following unstructured fine grids are utilized for each strain-limiting case (see Figure \ref{fig:p_meshes}).

\begin{itemize}
    \item Case 1: 25 386 triangular fine cells and 13 089 vertices.
    \item Case 2: 25 572 triangular fine cells and 13 166 vertices.
\end{itemize}

The inhomogeneity of the problems arises from the presence of perforations, while being outside of them, the material properties are constant. On the perforations themselves, we set zero Neumann boundary conditions. The following parameters are used in the model problems.

\begin{itemize}
    \item Case 1: $\xi = 1$, $\alpha = 1.1$, $\beta = 1$, $f_v = 0.29$, and $g_v = 0.3$.
    \item Case 2: $\xi = 1.2$, $\alpha = 1.3$, $\beta = 10^4$, $f_v = 3.5 \cdot 10^{-5}$, and $g_v = 3.6 \cdot 10^{-5}$.
\end{itemize}


\begin{figure}[!htbp]
    \centering
    \begin{subfigure}[b]{\textwidth}
        \centering
        \includegraphics[width=\textwidth]{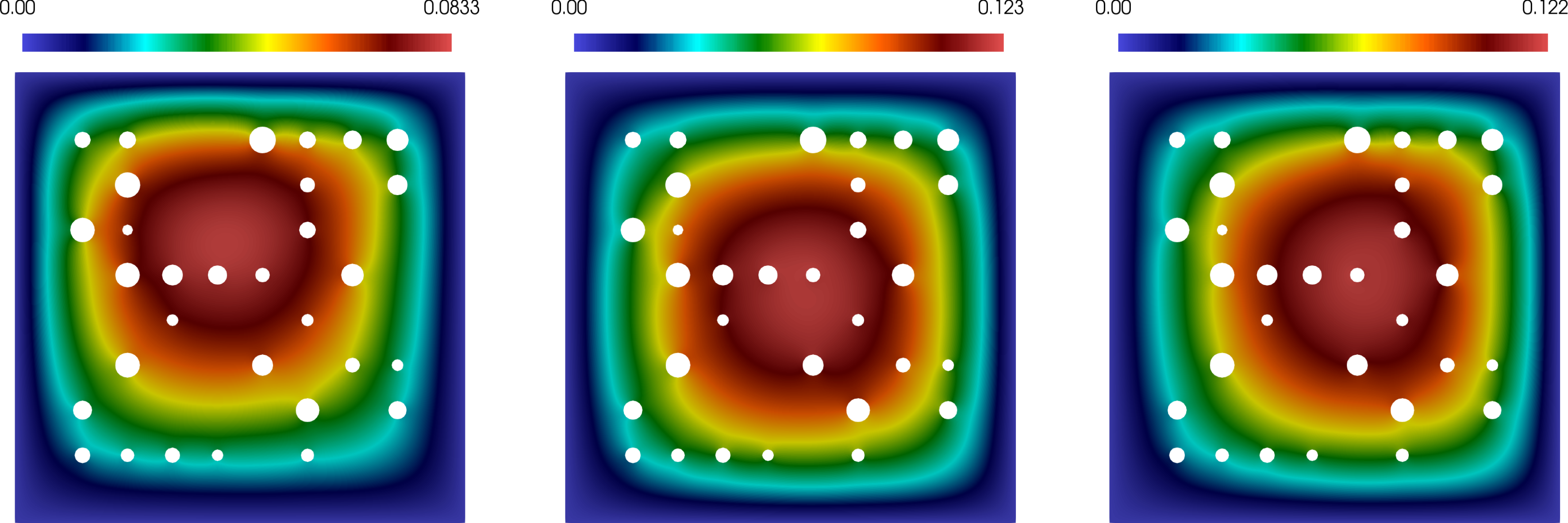}
        \caption{Reference solution.}
        \label{fig:p_results_fine}
    \end{subfigure}
    \begin{subfigure}[b]{\textwidth}
        \centering
        \includegraphics[width=\textwidth]{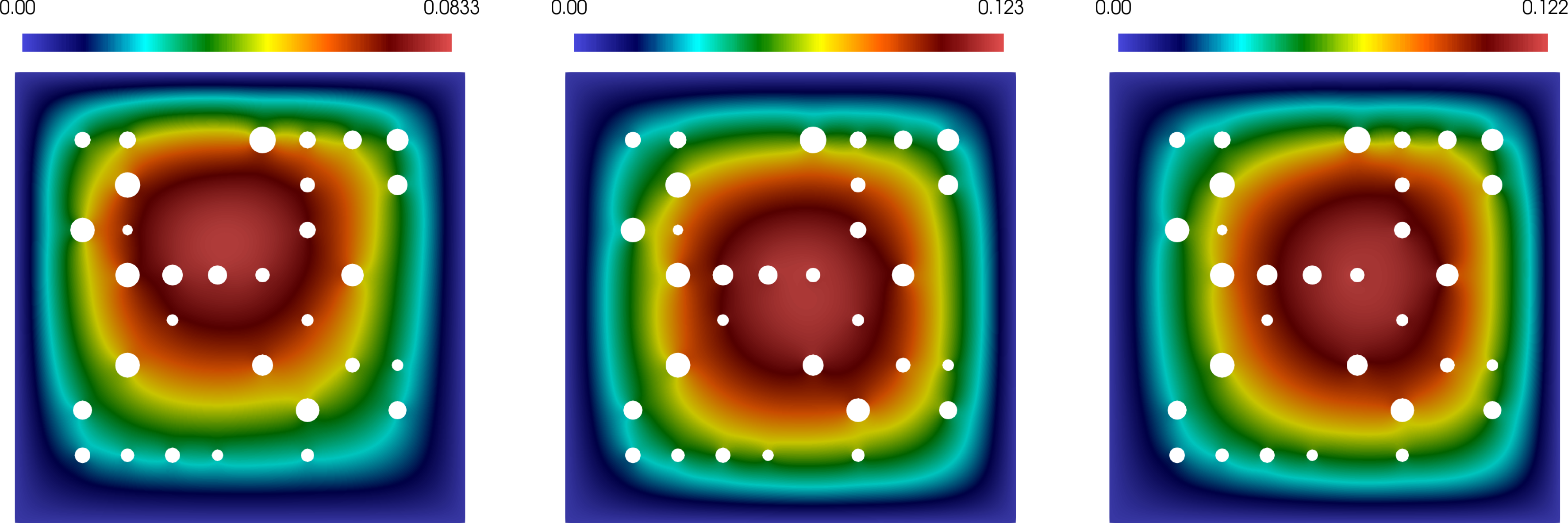}
        \caption{Online adaptive multiscale solution with $N_{\tu{b}} = 4$, $N_{\tu{it}} = 3$, $\theta = 0.8$, and $\delta = 0.1$\,.}
        \label{fig:p_results_online}
    \end{subfigure}
    \begin{subfigure}[b]{\textwidth}
        \centering
        \includegraphics[width=\textwidth]{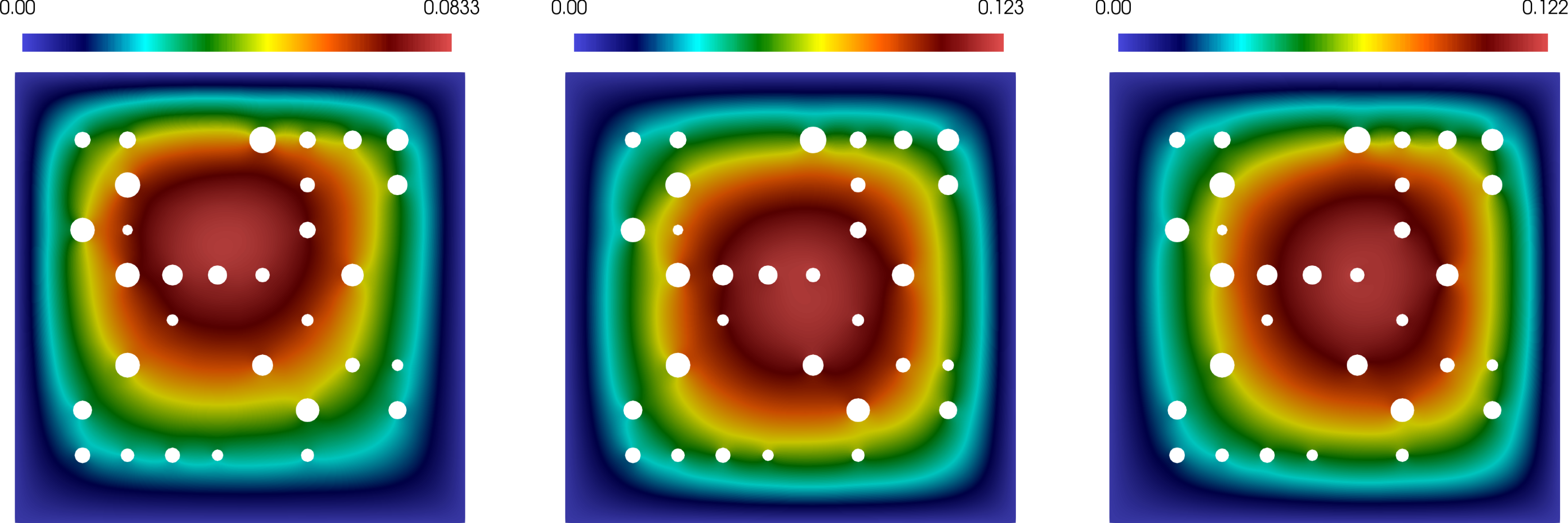}
        \caption{Offline multiscale solution with $N_{\tu{b}} = 4$\,.}
        \label{fig:p_results_offline}
    \end{subfigure}
    \caption{Distributions of the microrotations $\Phi$ and displacements $u_1$ and $u_2$ (from left to right) for Case 1 (small strain-limiting parameter) of the perforated media.}
    \label{fig:p_results}
\end{figure}

Figure \ref{fig:p_results} shows the numerical results of the model problem for Case 1. We present the distributions of microrotations $\Phi$ and displacements $u_1$ and $u_2$ from left to right. Figure \ref{fig:p_results_fine} depicts the reference solution on a fine grid. In Figure \ref{fig:p_results_online}, we show the online adaptive multiscale solution with four offline and three online basis functions per each coarse neighborhood, adaptivity parameter $\theta = 0.8$, and online basis function update criterion $\delta = 0.1$. In Figure \ref{fig:p_results_offline}, we depict the offline multiscale solution with four basis functions per coarse-grid node. One can see that all figures are similar, indicating that the proposed multiscale approaches can approximate the reference solution with high accuracy.

Regarding the modeled process, we see from Figure \ref{fig:p_results} that on the outer boundaries, the microrotations and displacements are fixed according to our boundary conditions. It is also clear that the perforations add inhomogeneity to the distribution of solutions. The most significant values of the fields are observed in the central area. In general, the numerical solutions are consistent with the modeling parameters and conditions.


Note that in all media types, we present the distributions of solutions only for Case 1. At the same time, we analyze the errors for all strain-limiting cases. The errors are computed for different numbers of offline and online basis functions per coarse-grid node. For online approaches, we compare the errors of the uniform method (having $\theta=1$) with the adaptive method (possessing $\theta = 0.8$). Also, different values of $\delta$ are taken into account for updating the online basis functions.


\begin{table}[!htbp]
\caption{Relative errors of the offline method for the case 1 of the perforated media.}
\label{tab:offline_errors_p_case_1}
\begin{center}
\begin{tabular}{ | c | c | c c | c c |}
\hline
\multirowcell{2}{$N_{\tu{b}}$}
& \multirowcell{2}{$\tu{DOF}_{\tu{H}}$}
& \multicolumn{2}{c|}{Displacement}
& \multicolumn{2}{c|}{Microrotation}
\\
\cline{3-6}
&
& $e^{u}_{L^2}$
& $e^{u}_{H^1}$
& $e^{\Phi}_{L^2}$ 
& $e^{\Phi}_{H^1}$ 
\\
\hline
 1   & 363 & 8.790890e-03 & 1.188050e-01 & 1.105500e-02 & 1.333090e-01\\
 2   & 726 & 7.919140e-03 & 1.136200e-01 & 9.840610e-03 & 1.287110e-01\\
 4   & 1452 & 2.327710e-03 & 2.878240e-02 & 2.580910e-03 & 3.180650e-02\\
 8   & 2904 & 9.246960e-04 & 1.889780e-02 & 1.077210e-03 & 2.116530e-02\\
\hline
\end{tabular}
\end{center}
\end{table}

Let us examine the errors with Case 1 (small strain-limiting parameter). Table \ref{tab:offline_errors_p_case_1} presents the errors for the offline method with different numbers of offline basis functions $N_{\tu{b}}$ per coarse-grid node. One can see that the errors are minor. With two offline basis functions, one can achieve $L^2$ errors less than 1\% for all fields. However, relative $H^1$ errors are more considerable. When using eight basis functions, we get $H^1$ errors around 2\% for all fields. This phenomenon is explained by the fact that these errors correspond to gradient errors. Note that all errors decrease as the number of basis functions increases. Therefore, the solution convergence is related to the number of basis functions.

\begin{table}[!htbp]
\caption{Relative errors of the online method with $\theta = 1$ for the case 1 of the perforated media.}
\label{tab:online_errors_p_case_1_theta_1}
\begin{center}
\begin{tabular}{ | c | c | c c | c c | }
\hline
\multirowcell{2}{$N_{\tu{b}} + N_{\tu{it}}$}
& \multirowcell{2}{$\tu{DOF}_{\tu{H}}$}
& \multicolumn{2}{c|}{Displacement}
& \multicolumn{2}{c|}{Microrotation}
\\
\cline{3-6}
&
& $e^{u}_{L^2}$
& $e^{u}_{H^1}$
& $e^{\Phi}_{L^2}$ 
& $e^{\Phi}_{H^1}$ 
\\
\hline
\multicolumn{6}{|c|}{$\delta = \infty$}
\\
\hline
 1 + 3 & 1452 & 2.293390e-03 & 2.252810e-02 & 2.312540e-03 & 2.132750e-02\\
 2 + 3 & 1815 & 1.974990e-03 & 2.165180e-02 & 1.917930e-03 & 2.064000e-02\\
 4 + 3 & 2541 & 1.302500e-03 & 1.794700e-02 & 1.342190e-03 & 1.816850e-02\\
 8 + 3 & 3993 & 7.425300e-04 & 1.365740e-02 & 7.438900e-04 & 1.295070e-02\\
 1 + 4 & 1815 & 2.089160e-03 & 2.206200e-02 & 2.002210e-03 & 2.068080e-02\\
\hline
\multicolumn{6}{|c|}{$\delta = 0.4$}
\\
\hline
 1 + 3 & 1452 & 3.887440e-03 & 2.946610e-02 & 5.033600e-03 & 3.333880e-02\\
 2 + 3 & 1815 & 3.614680e-03 & 2.724770e-02 & 3.903850e-03 & 2.791010e-02\\
 4 + 3 & 2541 & 3.750810e-04 & 7.430590e-03 & 3.956140e-04 & 7.451690e-03\\
 8 + 3 & 3993 & 1.718010e-04 & 4.694270e-03 & 1.919810e-04 & 5.223890e-03\\
 1 + 4 & 1815 & 2.579030e-03 & 2.416660e-02 & 3.950470e-03 & 3.025680e-02\\
\hline
\multicolumn{6}{|c|}{$\delta = 0.2$}
\\
\hline
 1 + 3 & 1452 & 1.563150e-03 & 1.224960e-02 & 1.854350e-03 & 1.205780e-02\\
 2 + 3 & 1815 & 1.246530e-03 & 1.167840e-02 & 1.314910e-03 & 1.142790e-02\\
 4 + 3 & 2541 & 7.634300e-04 & 9.951380e-03 & 7.917770e-04 & 1.025430e-02\\
 8 + 3 & 3993 & 3.381710e-04 & 7.389850e-03 & 3.401310e-04 & 7.258190e-03\\
 1 + 4 & 1815 & 1.333490e-03 & 1.190830e-02 & 1.450690e-03 & 1.142320e-02\\
\hline
\multicolumn{6}{|c|}{$\delta = 0.1$}
\\
\hline
 1 + 3 & 1452 & 2.909710e-03 & 1.200190e-02 & 2.601300e-03 & 1.097140e-02\\
 2 + 3 & 1815 & 2.261950e-03 & 1.021030e-02 & 2.719220e-03 & 1.221250e-02\\
 4 + 3 & 2541 & 2.073050e-04 & 3.791500e-03 & 2.437850e-04 & 4.117560e-03\\
 8 + 3 & 3993 & 1.164450e-04 & 3.185930e-03 & 1.256850e-04 & 3.282210e-03\\
 1 + 4 & 1815 & 1.855970e-03 & 9.952830e-03 & 1.729650e-03 & 9.196910e-03\\
\hline
\end{tabular}
\end{center}
\end{table}

Table \ref{tab:online_errors_p_case_1_theta_1} presents the errors of the online uniform method with $\theta = 1$ for the Case 1 of the perforated media. Here, $N_{\tu{it}}$ denotes the number of inner online iterations, which is equal to the number of online basis functions in the uniform case. We consider different values of $\delta$ for updating the online basis functions. Note that $\delta=\infty$ corresponds to one computation of the online basis functions without further updates. As expected, using online basis functions can significantly improve the accuracy of multiscale modeling. For example, when using four offline and three online basis functions with $\delta=0.4$, the $L^2$ together with $H^1$ errors are less than 1\%\,, and $L^2$ errors are less than 0.1\%.

As desired, we observe a decrease in the error as $\delta$ decreases. Smaller values of $\delta$ correspond to more frequent updates of the online basis functions, leading to a more detailed account of changes in the properties of the medium. One notable observation is the comparison of errors of the solutions using $N_{\tu{b}} = 2$ with $N_{\tu{it}} = 3$ and $N_{\tu{b}} = 1$ with $N_{\tu{it}} = 4$. These solutions have the same number of degrees of freedom $\tu{DOF}_{\tu{H}} = 1815$. In the case of $\delta = \infty$, we get a more accurate solution when using $N_{\tu{b}} = 2$ with $N_{\tu{it}} = 3$ than when using $N_{\tu{b}} = 1$ with $N_{\tu{it}} = 4\,.$ This phenomenon is due to the property of the online method, which may require more than one offline basis function for better accuracy. However, as $\delta$ decreases, the difference between the solutions becomes less noticeable. This is because more frequent updating of the online basis functions reduces the error caused by nonlinear changes in the medium properties.

\begin{table}[!htbp]
\caption{Relative errors of the online method with $\theta = 0.8$ for the case 1 of the perforated media.}
\label{tab:online_errors_p_case_1_theta_0.8}
\begin{center}
\begin{tabular}{ | c | c | c c | c c | }
\hline
\multirowcell{2}{$N_{\tu{b}} + N_{\tu{it}}$}
& \multirowcell{2}{$\tu{DOF}_{\tu{H}}$}
& \multicolumn{2}{c|}{Displacement}
& \multicolumn{2}{c|}{Microrotation}
\\
\cline{3-6}
&
& $e^{u}_{L^2}$
& $e^{u}_{H^1}$
& $e^{\Phi}_{L^2}$ 
& $e^{\Phi}_{H^1}$ 
\\
\hline
\multicolumn{6}{|c|}{$\delta = \infty$}
\\
\hline
 1 + 3 & 918 & 2.740720e-03 & 2.499600e-02 & 2.730260e-03 & 2.372540e-02\\
 2 + 3 & 1332 & 2.355740e-03 & 2.337180e-02 & 2.294790e-03 & 2.257730e-02\\
 4 + 3 & 2046 & 1.574790e-03 & 1.865390e-02 & 1.510320e-03 & 1.872400e-02\\
 8 + 3 & 3531 & 8.028730e-04 & 1.417650e-02 & 7.956290e-04 & 1.335410e-02\\
 1 + 4 & 1074 & 2.557320e-03 & 2.323330e-02 & 2.614950e-03 & 2.208280e-02\\
\hline
\multicolumn{6}{|c|}{$\delta = 0.4$}
\\
\hline
 1 + 3 & 918 & 8.074180e-03 & 4.347730e-02 & 1.048930e-02 & 4.866860e-02\\
 2 + 3 & 1332 & 5.527070e-03 & 3.559860e-02 & 6.928700e-03 & 3.968600e-02\\
 4 + 3 & 2046 & 6.749050e-04 & 9.323000e-03 & 7.635170e-04 & 1.027090e-02\\
 8 + 3 & 3531 & 2.153540e-04 & 5.400450e-03 & 2.606410e-04 & 6.214200e-03\\
 1 + 4 & 1074 & 7.072420e-03 & 4.118610e-02 & 9.615810e-03 & 4.664350e-02\\
\hline
\multicolumn{6}{|c|}{$\delta = 0.2$}
\\
\hline
 1 + 3 & 921 & 2.001750e-03 & 1.484240e-02 & 2.214470e-03 & 1.453870e-02\\
 2 + 3 & 1332 & 1.843170e-03 & 1.503500e-02 & 1.896930e-03 & 1.474460e-02\\
 4 + 3 & 2046 & 1.247520e-03 & 1.092100e-02 & 1.293610e-03 & 1.131100e-02\\
 8 + 3 & 3537 & 3.967820e-04 & 7.744780e-03 & 3.853940e-04 & 7.493360e-03\\
 1 + 4 & 1095 & 1.950220e-03 & 1.322370e-02 & 2.199220e-03 & 1.317400e-02\\
\hline
\multicolumn{6}{|c|}{$\delta = 0.1$}
\\
\hline
 1 + 3 & 921 & 4.707950e-03 & 1.803840e-02 & 4.842570e-03 & 1.910870e-02\\
 2 + 3 & 1359 & 3.158860e-03 & 1.336130e-02 & 3.610560e-03 & 1.592600e-02\\
 4 + 3 & 2046 & 4.927190e-04 & 5.223880e-03 & 5.724240e-04 & 5.706150e-03\\
 8 + 3 & 3537 & 1.383780e-04 & 3.568890e-03 & 1.477890e-04 & 3.777700e-03\\
 1 + 4 & 1095 & 4.330130e-03 & 1.550790e-02 & 4.237330e-03 & 1.609030e-02\\
\hline
\end{tabular}
\end{center}
\end{table}

We present the errors of the online adaptive multiscale method with $\theta=0.8$ in Table \ref{tab:online_errors_p_case_1_theta_0.8} for the Case 1 of perforated media. In this situation, $N_{\tu{it}}$ is not equal to the number of online basis functions per coarse-grid node, and $\tu{DOF}_{\tu{H}}$ corresponds to the maximum number of degrees of freedom since they can be different at each online basis update. One can see that the errors of the adaptive method are at about the same level as those of the uniform method. For example, when $N_{\tu{b}} = 4$ with $N_{\tu{it}} = 3$ at $\delta = 0.1$, we obtain $L^2$ errors less than 0.1\% and $H^1$ errors less than 1\% for both online methods. The uniform method has 2541 degrees of freedom, and the adaptive method has only 2046 degrees of freedom. Thus, by solving a discrete problem of smaller size, one can reach comparable accuracy.


\begin{table}[!htbp]
\caption{Relative errors of the offline method for the case 2 of the perforated media.}
\label{tab:offline_errors_p_case_2}
\begin{center}
\begin{tabular}{ | c | c | c c | c c |}
\hline
\multirowcell{2}{$N_{\tu{b}}$}
& \multirowcell{2}{$\tu{DOF}_{\tu{H}}$}
& \multicolumn{2}{c|}{Displacement}
& \multicolumn{2}{c|}{Microrotation}
\\
\cline{3-6}
&
& $e^{u}_{L^2}$
& $e^{u}_{H^1}$
& $e^{\Phi}_{L^2}$ 
& $e^{\Phi}_{H^1}$ 
\\
\hline
 1   & 363 & 1.370340e-02 & 1.149450e-01 & 1.699130e-02 & 1.298840e-01\\
 2   & 726 & 1.136590e-02 & 1.117520e-01 & 1.310930e-02 & 1.266990e-01\\
 4   & 1452 & 2.085370e-03 & 2.609290e-02 & 2.369690e-03 & 2.847360e-02\\
 8   & 2904 & 8.463220e-04 & 1.798340e-02 & 1.026200e-03 & 2.083840e-02\\
\hline
\end{tabular}
\end{center}
\end{table}

Let us briefly consider Case 2, which corresponds to a big strain-limiting parameter. Table \ref{tab:offline_errors_p_case_2} presents the errors for the offline multiscale method using different numbers of basis functions per coarse-grid node. We see that the errors are mainly comparable to Case 1 (Table \ref{tab:offline_errors_p_case_1}). One can also observe convergence in terms of the number of basis functions. Generally, the errors are minor.

\begin{table}[!htbp]
\caption{Relative errors of the online method with $\theta = 1$ for the case 2 of the perforated media.}
\label{tab:online_errors_p_case_2_theta_1}
\begin{center}
\begin{tabular}{ | c | c | c c | c c | }
\hline
\multirowcell{2}{$N_{\tu{b}} + N_{\tu{it}}$}
& \multirowcell{2}{$\tu{DOF}_{\tu{H}}$}
& \multicolumn{2}{c|}{Displacement}
& \multicolumn{2}{c|}{Microrotation}
\\
\cline{3-6}
&
& $e^{u}_{L^2}$
& $e^{u}_{H^1}$
& $e^{\Phi}_{L^2}$ 
& $e^{\Phi}_{H^1}$ 
\\
\hline
\multicolumn{6}{|c|}{$\delta = \infty$}
\\
\hline
 1 + 3 & 1452 & 2.435850e-03 & 2.170450e-02 & 2.476330e-03 & 1.967200e-02\\
 2 + 3 & 1815 & 2.033500e-03 & 2.116150e-02 & 1.936440e-03 & 1.942930e-02\\
 4 + 3 & 2541 & 1.110540e-03 & 1.629070e-02 & 1.164350e-03 & 1.549650e-02\\
 8 + 3 & 3993 & 7.451800e-04 & 1.318320e-02 & 7.644670e-04 & 1.254380e-02\\
 1 + 4 & 1815 & 2.196970e-03 & 2.133130e-02 & 2.058320e-03 & 1.918770e-02\\
\hline
\multicolumn{6}{|c|}{$\delta = 0.4$}
\\
\hline
 1 + 3 & 1452 & 4.987360e-03 & 2.872880e-02 & 6.462580e-03 & 3.388980e-02\\
 2 + 3 & 1815 & 4.167940e-03 & 2.595640e-02 & 5.253880e-03 & 2.809690e-02\\
 4 + 3 & 2541 & 3.906550e-04 & 6.585540e-03 & 4.810260e-04 & 7.630010e-03\\
 8 + 3 & 3993 & 1.791280e-04 & 4.494120e-03 & 2.201490e-04 & 5.358520e-03\\
 1 + 4 & 1815 & 3.068900e-03 & 2.296110e-02 & 3.740420e-03 & 2.849240e-02\\
\hline
\multicolumn{6}{|c|}{$\delta = 0.2$}
\\
\hline
 1 + 3 & 1452 & 1.985970e-03 & 1.132960e-02 & 2.201000e-03 & 1.053690e-02\\
 2 + 3 & 1815 & 1.344490e-03 & 1.090480e-02 & 1.398930e-03 & 1.002070e-02\\
 4 + 3 & 2541 & 7.758180e-04 & 8.888690e-03 & 9.287580e-04 & 8.832760e-03\\
 8 + 3 & 3993 & 3.431700e-04 & 6.869240e-03 & 3.341570e-04 & 6.398860e-03\\
 1 + 4 & 1815 & 1.540020e-03 & 1.097400e-02 & 1.695390e-03 & 9.976230e-03\\
\hline
\multicolumn{6}{|c|}{$\delta = 0.1$}
\\
\hline
 1 + 3 & 1452 & 3.814970e-03 & 1.217410e-02 & 3.875700e-03 & 1.189900e-02\\
 2 + 3 & 1815 & 2.827150e-03 & 1.069590e-02 & 3.370360e-03 & 1.213590e-02\\
 4 + 3 & 2541 & 2.864940e-04 & 3.782950e-03 & 3.362190e-04 & 4.274660e-03\\
 8 + 3 & 3993 & 1.444410e-04 & 3.211770e-03 & 1.825890e-04 & 3.529940e-03\\
 1 + 4 & 1815 & 2.400610e-03 & 1.012620e-02 & 1.995780e-03 & 9.051530e-03\\
\hline
\end{tabular}
\end{center}
\end{table}

Table \ref{tab:online_errors_p_case_2_theta_1} shows the errors of the online uniform multiscale method with $\theta = 1$ for the Case 2 of perforated media. We also consider different values of $\delta$. One can realize that the accuracy of the online uniform method for Case 2 is about the same as for Case 1. Regarding $\delta=\infty$, we again see that the solution using $N_{\tu{b}} = 2$ with $N_{\tu{it}} = 3$ gives smaller errors than the one using $N_{\tu{b}} = 1$ with $N_{\tu{it}} = 4\,.$ However, for the remaining cases of $\delta$, this advantage is not noticeable. Moreover, the multiscale solution using $N_{\tu{b}} = 1$ and $N_{\tu{it}} = 4$ often gives better accuracy. One can explain this by the larger strain-limiting parameter of Case 2.

\begin{table}[!htbp]
\caption{Relative errors of the online method with $\theta = 0.8$ for the case 2 of the perforated media.}
\label{tab:online_errors_p_case_2_theta_0.8}
\begin{center}
\begin{tabular}{ | c | c | c c | c c | }
\hline
\multirowcell{2}{$N_{\tu{b}} + N_{\tu{it}}$}
& \multirowcell{2}{$\tu{DOF}_{\tu{H}}$}
& \multicolumn{2}{c|}{Displacement}
& \multicolumn{2}{c|}{Microrotation}
\\
\cline{3-6}
&
& $e^{u}_{L^2}$
& $e^{u}_{H^1}$
& $e^{\Phi}_{L^2}$ 
& $e^{\Phi}_{H^1}$ 
\\
\hline
\multicolumn{6}{|c|}{$\delta = \infty$}
\\
\hline
 1 + 3 & 909 & 2.891710e-03 & 2.413830e-02 & 3.136580e-03 & 2.241100e-02\\
 2 + 3 & 1302 & 2.555780e-03 & 2.283970e-02 & 2.638580e-03 & 2.143710e-02\\
 4 + 3 & 2064 & 1.458480e-03 & 1.712380e-02 & 1.483370e-03 & 1.638800e-02\\
 8 + 3 & 3558 & 8.242540e-04 & 1.378050e-02 & 8.203950e-04 & 1.310640e-02\\
 1 + 4 & 1071 & 2.844800e-03 & 2.255290e-02 & 3.143100e-03 & 2.070790e-02\\
\hline
\multicolumn{6}{|c|}{$\delta = 0.4$}
\\
\hline
 1 + 3 & 912 & 9.028990e-03 & 4.054170e-02 & 1.295380e-02 & 4.849750e-02\\
 2 + 3 & 1332 & 7.354100e-03 & 3.505800e-02 & 1.054620e-02 & 4.153830e-02\\
 4 + 3 & 2064 & 6.639720e-04 & 8.346750e-03 & 8.410670e-04 & 9.949140e-03\\
 8 + 3 & 3558 & 2.508880e-04 & 5.235840e-03 & 2.820890e-04 & 6.044700e-03\\
 1 + 4 & 1077 & 7.995480e-03 & 3.853170e-02 & 1.276200e-02 & 4.740510e-02\\
\hline
\multicolumn{6}{|c|}{$\delta = 0.2$}
\\
\hline
 1 + 3 & 912 & 2.495600e-03 & 1.467600e-02 & 2.914860e-03 & 1.402710e-02\\
 2 + 3 & 1332 & 2.120530e-03 & 1.426070e-02 & 2.326210e-03 & 1.444220e-02\\
 4 + 3 & 2070 & 1.338720e-03 & 1.001800e-02 & 1.515790e-03 & 1.018390e-02\\
 8 + 3 & 3558 & 3.960510e-04 & 7.142770e-03 & 3.882370e-04 & 6.863680e-03\\
 1 + 4 & 1077 & 2.513640e-03 & 1.257770e-02 & 2.991310e-03 & 1.210970e-02\\
\hline
\multicolumn{6}{|c|}{$\delta = 0.1$}
\\
\hline
 1 + 3 & 912 & 7.231120e-03 & 1.997160e-02 & 7.389370e-03 & 2.010930e-02\\
 2 + 3 & 1347 & 5.104610e-03 & 1.550660e-02 & 5.186220e-03 & 1.737100e-02\\
 4 + 3 & 2070 & 5.897670e-04 & 5.283380e-03 & 6.642410e-04 & 5.975320e-03\\
 8 + 3 & 3558 & 2.182390e-04 & 3.586320e-03 & 2.569860e-04 & 3.958370e-03\\
 1 + 4 & 1077 & 6.709820e-03 & 1.779880e-02 & 6.864230e-03 & 1.812390e-02\\
\hline
\end{tabular}
\end{center}
\end{table}

Finally, Table \ref{tab:online_errors_p_case_2_theta_0.8} demonstrates the errors of the online adaptive multiscale method with $\theta = 0.8$ for the Case 2 of perforated media. As in the previous case, one can see that the accuracy of the adaptive method is comparable to the uniform one. For example, using $N_{\tu{b}} = 2$ with $N_{\tu{it}} = 3$ at $\delta=0.4$, the uniform method gives $L^2$ error less than 0.1\% and $H^1$ error less than 1\%. The adaptive method has similar accuracy to the uniform one. At the same time, the uniform method has 2541 degrees of freedom, and the adaptive method has only 2064 degrees of freedom. Thus, our proposed multiscale approaches allow us to consider perforated media with small and large strain-limiting parameters.

\subsection{Composite media}\label{combom}


\begin{figure}[!htbp]
    \centering
    \begin{subfigure}[b]{\textwidth}
        \centering
        \includegraphics[width=0.7\textwidth]{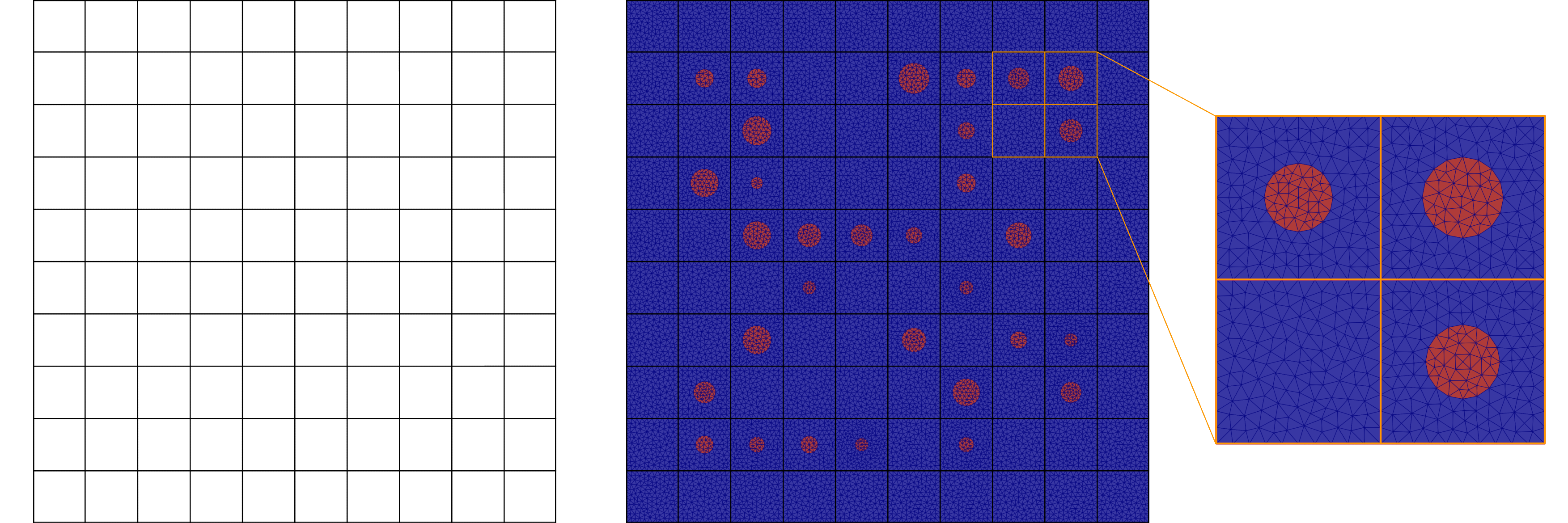}
        \caption{Case 1 (small strain-limiting parameter).}
        \label{fig:c_mesh_1}
    \end{subfigure}
    \begin{subfigure}[b]{\textwidth}
        \centering
        \includegraphics[width=0.7\textwidth]{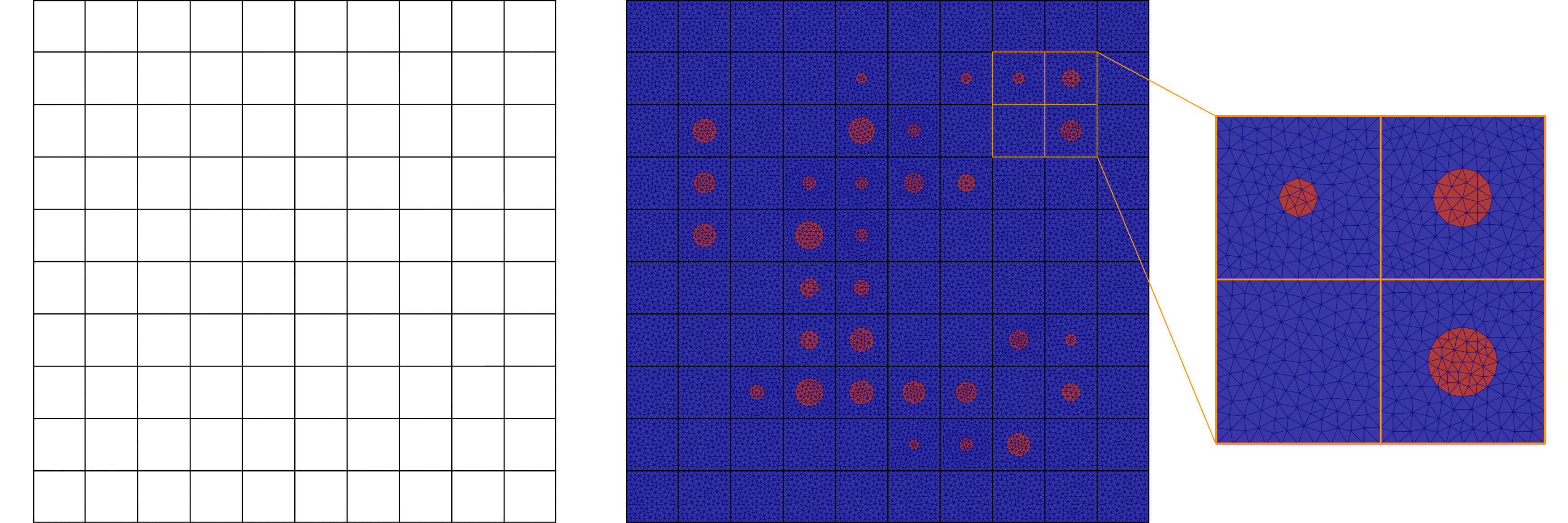}
        \caption{Case 2 (big strain-limiting parameter).}
        \label{fig:c_mesh_2}
    \end{subfigure}
    \caption{Computational grids for the composite media: left -- the coarse grid (each small square is a coarse block), middle -- the fine grid ($\Omega_1$ is blue, $\Omega_2$ is red), right -- a local domain.}
    \label{fig:c_meshes}
\end{figure}

Now, we investigate composite nonlinear Cosserat media with small and big strain-limiting parameters. The model problems are defined in the computational domain $\Omega = \Omega_1 \cup \Omega_2 = [0, 2]\times [0, 2]$ (see Figure \ref{fig:c_meshes}). Here, $\Omega_2$ is the domain of inclusions, and $\Omega_1$ is the rest. As in the previous Section \ref{perform}, we use a uniform coarse grid with 100 rectangular cells and 121 vertices. The following unstructured fine grids are applied.

\begin{itemize}
    \item Case 1: 26 922 triangular fine cells and 13 662 vertices.
    \item Case 2: 26 940 triangular fine cells and 13 671 vertices.
\end{itemize}

In this type of media, the inhomogeneity is caused by two domains whose properties can be very different. We suppose that the materials of the composite are rigidly connected, that is, the interface conditions follow from differential equations by continuity. In our model problems, {the following parameters are chosen}.

\begin{itemize}
    \item Case 1: $f_v = 0.3$, $g_v = 0.31$, and
    \begin{equation*}
        \xi = 
        \begin{cases}
            1, & x \in \Omega_1,\\
            1000, & x \in \Omega_2
        \end{cases}, \quad
        \alpha = 
        \begin{cases}
            1.1, & x \in \Omega_1,\\
            2000, & x \in \Omega_2
        \end{cases}, \quad
        \beta = 
        \begin{cases}
            1, & x \in \Omega_1,\\
            10^{-4}, & x \in \Omega_2
        \end{cases}.
    \end{equation*}
    \item Case 2: $f_v = 3.6 \cdot 10^{-5}$, $g_v = 3.45 \cdot 10^{-5}$, and
    \begin{equation*}
        \xi = 
        \begin{cases}
            1.2, & x \in \Omega_1,\\
            3000, & x \in \Omega_2
        \end{cases}, \quad
        \alpha = 
        \begin{cases}
            1.3, & x \in \Omega_1,\\
            4000, & x \in \Omega_2
        \end{cases}, \quad
        \beta = 
        \begin{cases}
            10^{4}, & x \in \Omega_1,\\
            1, & x \in \Omega_2
        \end{cases}.
    \end{equation*}
\end{itemize}


\begin{figure}[!htbp]
    \centering
    \begin{subfigure}[b]{\textwidth}
        \centering
        \includegraphics[width=\textwidth]{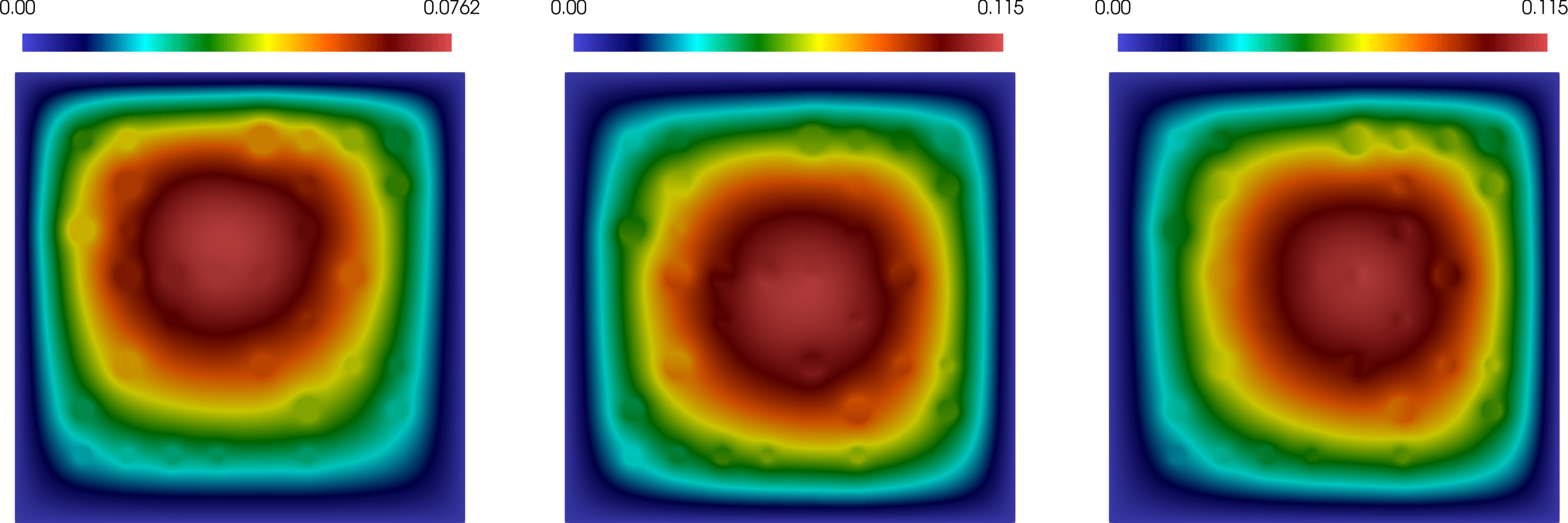}
        \caption{Reference solution.}
        \label{fig:c_results_fine}
    \end{subfigure}
    \begin{subfigure}[b]{\textwidth}
        \centering
        \includegraphics[width=\textwidth]{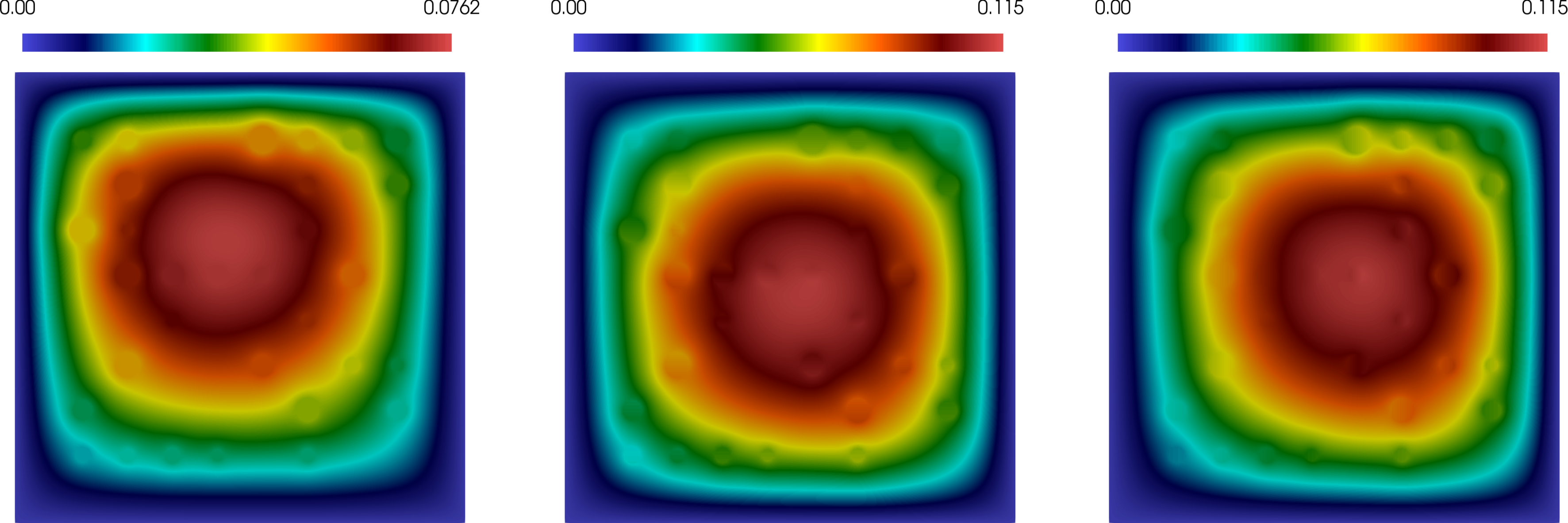}
        \caption{Online adaptive multiscale solution with $N_{\tu{b}} = 4$, $N_{\tu{it}} = 3$, $\theta = 0.8$, and $\delta = 0.1$\,.}
        \label{fig:c_results_online}
    \end{subfigure}
    \begin{subfigure}[b]{\textwidth}
        \centering
        \includegraphics[width=\textwidth]{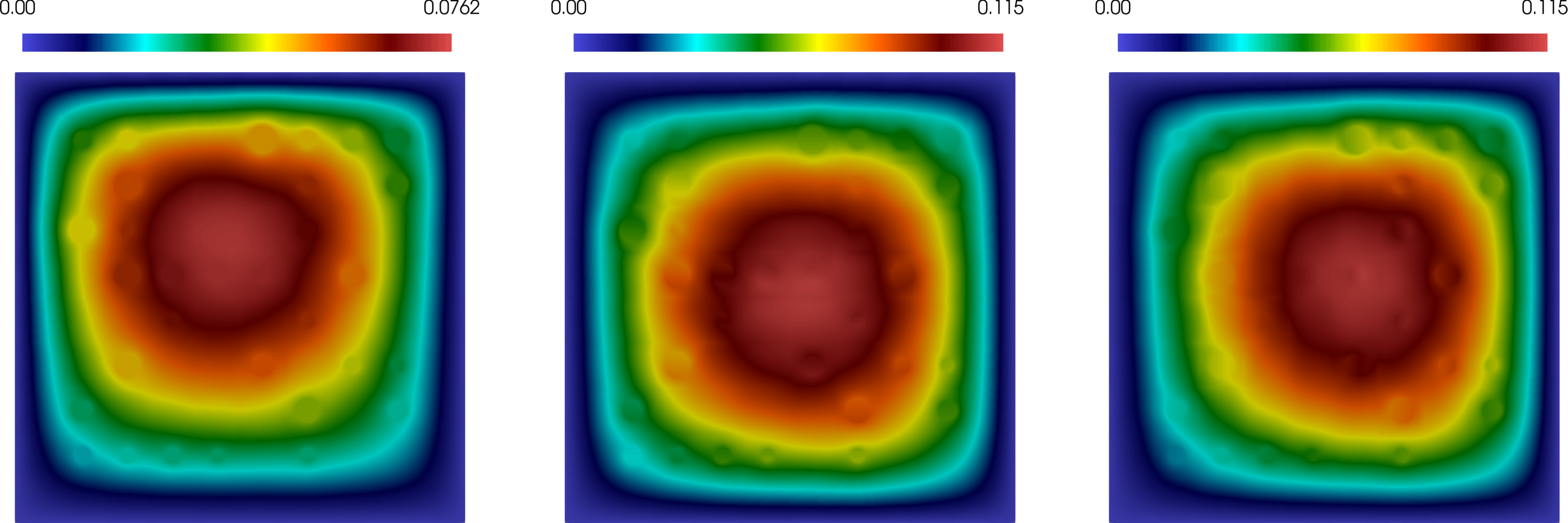}
        \caption{Offline multiscale solution with $N_{\tu{b}} = 4$\,.}
        \label{fig:c_results_offline}
    \end{subfigure}
    \caption{Distributions of the microrotations $\Phi$ and displacements $u_1$ and $u_2$ (from left to right) for Case 1 (small strain-limiting parameter) of the composite media.}
    \label{fig:c_results}
\end{figure}

Figure \ref{fig:c_results} presents the numerical solution of the model problem for Case 1. From left to right, we depict the microrotations $\Phi$ and displacements $u_1$ and $u_2$. In Figure \ref{fig:c_results_fine}, the reference solution is shown on a fine grid. Figure \ref{fig:c_results_online} presents the online adaptive multiscale solution ($\theta = 0.8$) using four offline and three online basis functions with parameter $\delta = 0.1$. Figure \ref{fig:c_results_offline} shows the offline multiscale solution with four offline basis functions. All figures are very similar, indicating the high accuracy of the multiscale approaches. However, one can recognize certain differences between the offline solution and the reference solution.

It is also clear from Figure \ref{fig:c_results} that the distributions of the solution fields noticeably reflect the medium's inhomogeneity, which is expressed by the presence of inclusions. In these inclusions, the strain-limiting parameter $\beta$ is smaller, but the material properties $\xi$ and $\alpha$ are stiffer than in the rest domain. As in the previous media type, the microrotations and displacements are fixed on the boundaries. The highest field magnitudes are in the central region. In general, the numerical solutions correspond to the modeled process.



\begin{table}[!htbp]
\caption{Relative errors of the offline method for the case 1 of the composite media.}
\label{tab:offline_errors_c_case_1}
\begin{center}
\begin{tabular}{ | c | c | c c | c c |}
\hline
\multirowcell{2}{$N_{\tu{b}}$}
& \multirowcell{2}{$\tu{DOF}_{\tu{H}}$}
& \multicolumn{2}{c|}{Displacement}
& \multicolumn{2}{c|}{Microrotation}
\\
\cline{3-6}
&
& $e^{u}_{L^2}$
& $e^{u}_{H^1}$
& $e^{\Phi}_{L^2}$ 
& $e^{\Phi}_{H^1}$ 
\\
\hline
 1   & 363 & 8.821480e-02 & 3.189490e-01 & 2.988490e-01 & 3.715490e-01\\
 2   & 726 & 3.917690e-02 & 1.324200e-01 & 1.222990e-01 & 2.208070e-01\\
 4   & 1452 & 6.962580e-03 & 2.019020e-02 & 1.850640e-02 & 5.644450e-02\\
 8   & 2904 & 1.988760e-03 & 4.348660e-03 & 3.817930e-03 & 2.722010e-02\\
\hline
\end{tabular}
\end{center}
\end{table}

Let us consider the errors of the multiscale solutions for the Case 1. In Table \ref{tab:offline_errors_c_case_1}, we present the errors of the offline multiscale method. One can see that the errors are minor. For example, using eight offline basis functions per coarse-grid node, all errors (except the $H^1$ error for the microrotation) are less than 1\%. As for the previous media type, the $H^1$ errors are significantly bigger than the $L^2$ ones because they reflect errors in computing the gradients of the solution. We also observe the convergence based on the number of basis functions.

\begin{table}[!htbp]
\caption{Relative errors of the online method with $\theta = 1$ for the case 1 of the composite media.}
\label{tab:online_errors_c_case_1_theta_1}
\begin{center}
\begin{tabular}{ | c | c | c c | c c | }
\hline
\multirowcell{2}{$N_{\tu{b}} + N_{\tu{it}}$}
& \multirowcell{2}{$\tu{DOF}_{\tu{H}}$}
& \multicolumn{2}{c|}{Displacement}
& \multicolumn{2}{c|}{Microrotation}
\\
\cline{3-6}
&
& $e^{u}_{L^2}$
& $e^{u}_{H^1}$
& $e^{\Phi}_{L^2}$ 
& $e^{\Phi}_{H^1}$ 
\\
\hline
\multicolumn{6}{|c|}{$\delta = \infty$}
\\
\hline
 1 + 3 & 1452 & 1.540550e-03 & 1.432950e-03 & 1.445170e-03 & 1.408890e-02\\
 2 + 3 & 1815 & 1.322930e-03 & 1.102990e-03 & 1.238920e-03 & 1.324480e-02\\
 4 + 3 & 2541 & 8.390830e-04 & 5.006020e-04 & 7.973220e-04 & 1.126740e-02\\
 8 + 3 & 3993 & 4.287550e-04 & 3.243690e-04 & 4.135700e-04 & 9.052670e-03\\
 1 + 4 & 1815 & 1.200350e-03 & 1.185380e-03 & 1.187120e-03 & 1.318330e-02\\
\hline
\multicolumn{6}{|c|}{$\delta = 0.4$}
\\
\hline
 1 + 3 & 1452 & 1.916410e-03 & 2.617040e-03 & 2.660490e-03 & 2.630780e-02\\
 2 + 3 & 1815 & 3.753340e-03 & 2.950720e-03 & 3.638780e-03 & 2.857920e-02\\
 4 + 3 & 2541 & 4.856150e-04 & 4.393730e-04 & 4.785500e-04 & 1.025310e-02\\
 8 + 3 & 3993 & 2.579940e-04 & 3.511540e-04 & 2.560860e-04 & 7.048410e-03\\
 1 + 4 & 1815 & 1.434210e-03 & 1.855850e-03 & 1.794810e-03 & 2.088730e-02\\
\hline
\multicolumn{6}{|c|}{$\delta = 0.2$}
\\
\hline
 1 + 3 & 1452 & 1.481960e-03 & 1.672200e-03 & 2.059940e-03 & 9.501830e-03\\
 2 + 3 & 1815 & 1.259160e-03 & 1.099940e-03 & 1.344230e-03 & 8.796490e-03\\
 4 + 3 & 2541 & 7.458470e-04 & 6.358040e-04 & 1.004590e-03 & 8.353710e-03\\
 8 + 3 & 3993 & 2.415230e-04 & 2.209090e-04 & 3.042640e-04 & 6.169510e-03\\
 1 + 4 & 1815 & 1.113840e-03 & 1.269720e-03 & 1.488810e-03 & 8.402260e-03\\
\hline
\multicolumn{6}{|c|}{$\delta = 0.1$}
\\
\hline
 1 + 3 & 1452 & 2.002130e-03 & 1.399120e-03 & 1.522510e-03 & 1.001270e-02\\
 2 + 3 & 1815 & 2.899690e-03 & 2.111450e-03 & 2.655210e-03 & 1.358140e-02\\
 4 + 3 & 2541 & 3.208680e-04 & 1.236260e-04 & 2.478960e-04 & 4.955970e-03\\
 8 + 3 & 3993 & 1.432610e-04 & 1.283130e-04 & 1.360620e-04 & 4.441870e-03\\
 1 + 4 & 1815 & 1.228000e-03 & 8.582460e-04 & 9.780720e-04 & 7.906290e-03\\
\hline
\end{tabular}
\end{center}
\end{table}

Table \ref{tab:online_errors_c_case_1_theta_1} presents the errors of the online uniform multiscale method ($\theta=1$) for the Case 1 of composite media. One can see that using online basis functions can significantly improve the solution's accuracy. For example, already, with the use of one offline basis function and three online basis functions at $\delta=0.2$, we can achieve $L^2$ and $H^1$ errors less than 1\%. We also observe convergence in relation to the number of offline basis functions. The accuracy also improves on average as $\delta$ decreases. However, it is worth noting that we no longer attain better accuracy for the solution with $N_{\tu{b}} = 2$ and $N_{\tu{it}} = 3$ when comparing to $N_{\tu{b}} = 1$ and $N_{\tu{it}} = 4$. This can be explained by the big strain-limiting parameter and heterogeneity of the problem. It is a conjecture that in the case of computing offline basis functions at each update of the online basis functions, we could see this advantage. However, such a procedure is too costly.

\begin{table}[!htbp]
\caption{Relative errors of the online method with $\theta = 0.8$ for the case 1 of the composite media.}
\label{tab:online_errors_c_case_1_theta_0.8}
\begin{center}
\begin{tabular}{ | c | c | c c | c c | }
\hline
\multirowcell{2}{$N_{\tu{b}} + N_{\tu{it}}$}
& \multirowcell{2}{$\tu{DOF}_{\tu{H}}$}
& \multicolumn{2}{c|}{Displacement}
& \multicolumn{2}{c|}{Microrotation}
\\
\cline{3-6}
&
& $e^{u}_{L^2}$
& $e^{u}_{H^1}$
& $e^{\Phi}_{L^2}$ 
& $e^{\Phi}_{H^1}$ 
\\
\hline
\multicolumn{6}{|c|}{$\delta = \infty$}
\\
\hline
 1 + 3 & 792 & 3.504310e-03 & 4.055800e-03 & 3.587720e-03 & 3.043990e-02\\
 2 + 3 & 1245 & 2.383030e-03 & 2.161330e-03 & 2.288430e-03 & 1.983400e-02\\
 4 + 3 & 2007 & 1.269100e-03 & 8.455580e-04 & 1.223640e-03 & 1.281290e-02\\
 8 + 3 & 3498 & 4.782010e-04 & 3.702360e-04 & 5.093350e-04 & 9.745110e-03\\
 1 + 4 & 975 & 2.998860e-03 & 2.383980e-03 & 2.839910e-03 & 1.950780e-02\\
\hline
\multicolumn{6}{|c|}{$\delta = 0.4$}
\\
\hline
 1 + 3 & 792 & 1.179040e-02 & 1.398400e-02 & 1.279110e-02 & 5.886700e-02\\
 2 + 3 & 1245 & 8.768270e-03 & 9.720320e-03 & 9.785990e-03 & 4.981840e-02\\
 4 + 3 & 2067 & 7.916820e-04 & 3.607100e-04 & 7.917660e-04 & 1.206180e-02\\
 8 + 3 & 3498 & 3.338000e-04 & 3.095550e-04 & 3.305040e-04 & 7.970240e-03\\
 1 + 4 & 975 & 9.338440e-03 & 1.312060e-02 & 1.164840e-02 & 5.417330e-02\\
\hline
\multicolumn{6}{|c|}{$\delta = 0.2$}
\\
\hline
 1 + 3 & 792 & 3.500540e-03 & 3.843040e-03 & 3.569960e-03 & 2.580140e-02\\
 2 + 3 & 1245 & 2.697600e-03 & 2.555530e-03 & 2.638660e-03 & 1.661120e-02\\
 4 + 3 & 2067 & 1.476230e-03 & 1.250290e-03 & 1.706940e-03 & 1.018510e-02\\
 8 + 3 & 3507 & 3.100230e-04 & 2.756500e-04 & 4.049160e-04 & 6.613470e-03\\
 1 + 4 & 975 & 3.278940e-03 & 2.758100e-03 & 2.986990e-03 & 1.419990e-02\\
\hline
\multicolumn{6}{|c|}{$\delta = 0.1$}
\\
\hline
 1 + 3 & 792 & 7.240430e-03 & 7.948310e-03 & 7.767290e-03 & 3.600690e-02\\
 2 + 3 & 1245 & 5.531660e-03 & 5.699530e-03 & 5.785120e-03 & 2.618690e-02\\
 4 + 3 & 2067 & 6.175090e-04 & 3.395710e-04 & 6.051020e-04 & 6.697970e-03\\
 8 + 3 & 3507 & 2.646880e-04 & 1.324650e-04 & 2.354530e-04 & 5.056960e-03\\
 1 + 4 & 975 & 6.899050e-03 & 7.360830e-03 & 7.277210e-03 & 2.612820e-02\\
\hline
\end{tabular}
\end{center}
\end{table}

Table \ref{tab:online_errors_c_case_1_theta_0.8} presents the errors of the online adaptive multiscale method ($\theta = 0.8$) for the Case 1 of composite media. As before, the uniform and adaptive multiscale methods have comparable errors. For example, using two offline and three online basis functions with $\delta=0.4$, the uniform method yields errors of less than 1\% (except the $H^1$ error for microrotation). The online adaptive multiscale method has similar accuracy to the uniform one. However, the uniform method has 1815 degrees of freedom, while the adaptive method has 1245, which is fewer than the uniform one. 


\begin{table}[!htbp]
\caption{Relative errors of the offline method for the case 2 of the composite media.}
\label{tab:offline_errors_c_case_2}
\begin{center}
\begin{tabular}{ | c | c | c c | c c |}
\hline
\multirowcell{2}{$N_{\tu{b}}$}
& \multirowcell{2}{$\tu{DOF}_{\tu{H}}$}
& \multicolumn{2}{c|}{Displacement}
& \multicolumn{2}{c|}{Microrotation}
\\
\cline{3-6}
&
& $e^{u}_{L^2}$
& $e^{u}_{H^1}$
& $e^{\Phi}_{L^2}$ 
& $e^{\Phi}_{H^1}$ 
\\
\hline
 1   & 363 & 1.640680e-01 & 5.914740e-01 & 5.319100e-01 & 6.056560e-01\\
 2   & 726 & 6.918150e-02 & 2.251570e-01 & 1.978250e-01 & 2.936410e-01\\
 4   & 1452 & 7.349560e-03 & 2.117800e-02 & 1.794980e-02 & 5.218640e-02\\
 8   & 2904 & 1.842920e-03 & 4.914930e-03 & 3.976310e-03 & 2.749620e-02\\
\hline
\end{tabular}
\end{center}
\end{table}

Let us consider the Case 2 of composite Cosserat media. 
Table \ref{tab:offline_errors_c_case_2} presents the errors of the offline multiscale method. One can notice that the errors are comparable to Case 1 (Table \ref{tab:offline_errors_c_case_1}), although slightly larger than Case 1. We also see that the errors decrease as the number of offline basis functions per coarse-grid node increases. Thus, one obtains convergence in regards to the number of basis functions.

\begin{table}[!htbp]
\caption{Relative errors of the online method with $\theta = 1$ for the case 2 of the composite media.}
\label{tab:online_errors_c_case_2_theta_1}
\begin{center}
\begin{tabular}{ | c | c | c c | c c | }
\hline
\multirowcell{2}{$N_{\tu{b}} + N_{\tu{it}}$}
& \multirowcell{2}{$\tu{DOF}_{\tu{H}}$}
& \multicolumn{2}{c|}{Displacement}
& \multicolumn{2}{c|}{Microrotation}
\\
\cline{3-6}
&
& $e^{u}_{L^2}$
& $e^{u}_{H^1}$
& $e^{\Phi}_{L^2}$ 
& $e^{\Phi}_{H^1}$ 
\\
\hline
\multicolumn{6}{|c|}{$\delta = \infty$}
\\
\hline
 1 + 3 & 1452 & 1.564350e-03 & 1.233900e-03 & 1.491590e-03 & 1.245180e-02\\
 2 + 3 & 1815 & 1.252380e-03 & 1.132610e-03 & 1.353440e-03 & 1.248730e-02\\
 4 + 3 & 2541 & 7.840430e-04 & 4.222470e-04 & 7.640060e-04 & 1.017490e-02\\
 8 + 3 & 3993 & 3.862730e-04 & 2.408230e-04 & 3.889250e-04 & 8.622800e-03\\
 1 + 4 & 1815 & 1.180850e-03 & 9.115450e-04 & 1.060520e-03 & 1.132220e-02\\
\hline
\multicolumn{6}{|c|}{$\delta = 0.4$}
\\
\hline
 1 + 3 & 1452 & 2.678970e-03 & 2.759920e-03 & 3.309940e-03 & 2.192180e-02\\
 2 + 3 & 1815 & 4.134810e-03 & 2.921550e-03 & 4.138980e-03 & 2.276860e-02\\
 4 + 3 & 2541 & 4.919840e-04 & 2.736570e-04 & 4.189340e-04 & 8.417580e-03\\
 8 + 3 & 3993 & 2.176740e-04 & 2.722260e-04 & 2.199610e-04 & 5.964190e-03\\
 1 + 4 & 1815 & 1.530630e-03 & 1.257940e-03 & 1.334140e-03 & 1.430190e-02\\
\hline
\multicolumn{6}{|c|}{$\delta = 0.2$}
\\
\hline
 1 + 3 & 1452 & 1.504100e-03 & 1.612560e-03 & 2.292810e-03 & 8.535800e-03\\
 2 + 3 & 1815 & 1.280870e-03 & 1.039480e-03 & 1.510380e-03 & 8.601380e-03\\
 4 + 3 & 2541 & 9.026200e-04 & 6.005110e-04 & 1.046730e-03 & 7.874670e-03\\
 8 + 3 & 3993 & 1.825100e-04 & 1.500560e-04 & 2.494650e-04 & 5.350810e-03\\
 1 + 4 & 1815 & 1.100780e-03 & 1.004660e-03 & 1.480570e-03 & 6.667670e-03\\
\hline
\multicolumn{6}{|c|}{$\delta = 0.1$}
\\
\hline
 1 + 3 & 1452 & 2.328760e-03 & 1.247360e-03 & 1.736490e-03 & 8.320120e-03\\
 2 + 3 & 1815 & 3.608740e-03 & 1.819820e-03 & 2.835160e-03 & 1.142450e-02\\
 4 + 3 & 2541 & 3.385930e-04 & 1.101790e-04 & 2.455330e-04 & 4.463650e-03\\
 8 + 3 & 3993 & 1.387230e-04 & 9.797640e-05 & 1.181420e-04 & 4.153570e-03\\
 1 + 4 & 1815 & 1.321970e-03 & 7.348080e-04 & 1.023170e-03 & 6.212770e-03\\
\hline
\end{tabular}
\end{center}
\end{table}

Table \ref{tab:online_errors_c_case_2_theta_1} presents the errors of the online uniform multiscale method ($\theta=1$) for the Case 2 of composite media. One can see that the use of online basis functions allows for facilitating convergence significantly. For example, adding three online basis functions to one offline basis function per coarse-grid node at $\delta=0.2$ gives errors less than 1\%. We observe convergence relating to the number of the offline basis functions. One can also realize that on average, the accuracy improves as $\delta$ decreases.

\begin{table}[!htbp]
\caption{Relative errors of the online method with $\theta = 0.8$ for the case 2 of the composite media.}
\label{tab:online_errors_c_case_2_theta_0.8}
\begin{center}
\begin{tabular}{ | c | c | c c | c c | }
\hline
\multirowcell{2}{$N_{\tu{b}} + N_{\tu{it}}$}
& \multirowcell{2}{$\tu{DOF}_{\tu{H}}$}
& \multicolumn{2}{c|}{Displacement}
& \multicolumn{2}{c|}{Microrotation}
\\
\cline{3-6}
&
& $e^{u}_{L^2}$
& $e^{u}_{H^1}$
& $e^{\Phi}_{L^2}$ 
& $e^{\Phi}_{H^1}$ 
\\
\hline
\multicolumn{6}{|c|}{$\delta = \infty$}
\\
\hline
 1 + 3 & 831 & 3.086740e-03 & 2.572870e-03 & 3.189070e-03 & 2.674780e-02\\
 2 + 3 & 1242 & 2.266410e-03 & 1.430050e-03 & 2.205310e-03 & 2.034500e-02\\
 4 + 3 & 2004 & 1.277540e-03 & 6.800980e-04 & 1.212320e-03 & 1.195470e-02\\
 8 + 3 & 3504 & 4.411360e-04 & 3.006820e-04 & 5.189180e-04 & 9.357110e-03\\
 1 + 4 & 1032 & 2.934250e-03 & 2.023030e-03 & 3.025310e-03 & 1.674490e-02\\
\hline
\multicolumn{6}{|c|}{$\delta = 0.4$}
\\
\hline
 1 + 3 & 831 & 1.412200e-02 & 1.273810e-02 & 1.468340e-02 & 5.247610e-02\\
 2 + 3 & 1242 & 9.504390e-03 & 1.013690e-02 & 1.234710e-02 & 4.625370e-02\\
 4 + 3 & 2034 & 8.137250e-04 & 2.965500e-04 & 8.708100e-04 & 1.030060e-02\\
 8 + 3 & 3504 & 3.300950e-04 & 1.655290e-04 & 3.523260e-04 & 6.878870e-03\\
 1 + 4 & 1032 & 1.125880e-02 & 9.721680e-03 & 1.148180e-02 & 4.241010e-02\\
\hline
\multicolumn{6}{|c|}{$\delta = 0.2$}
\\
\hline
 1 + 3 & 831 & 3.252640e-03 & 2.731240e-03 & 3.645410e-03 & 2.755540e-02\\
 2 + 3 & 1242 & 2.332410e-03 & 1.676960e-03 & 2.609550e-03 & 1.812550e-02\\
 4 + 3 & 2034 & 1.621970e-03 & 1.105830e-03 & 1.839640e-03 & 9.489620e-03\\
 8 + 3 & 3504 & 2.940410e-04 & 2.175700e-04 & 4.219670e-04 & 6.108530e-03\\
 1 + 4 & 1032 & 3.254500e-03 & 2.371670e-03 & 3.594400e-03 & 1.372610e-02\\
\hline
\multicolumn{6}{|c|}{$\delta = 0.1$}
\\
\hline
 1 + 3 & 831 & 8.198060e-03 & 6.405510e-03 & 8.391160e-03 & 3.665250e-02\\
 2 + 3 & 1245 & 6.459500e-03 & 5.427280e-03 & 7.218040e-03 & 2.414800e-02\\
 4 + 3 & 2034 & 6.939110e-04 & 3.268860e-04 & 6.366130e-04 & 6.357700e-03\\
 8 + 3 & 3504 & 2.529180e-04 & 1.253960e-04 & 2.760710e-04 & 4.846080e-03\\
 1 + 4 & 1032 & 6.988820e-03 & 5.844490e-03 & 7.534010e-03 & 2.245580e-02\\
\hline
\end{tabular}
\end{center}
\end{table}

Last, in Table \ref{tab:online_errors_c_case_2_theta_0.8}, we present the errors of the online adaptive multiscale method ($\theta = 0.8$) for the Case 2 of composite Cosserat media. One can see that the accuracy of the adaptive method is mainly comparable to that of the uniform method. For example, using two offline with three online basis functions at $\delta=\infty$, the uniform method yields errors less than 1\% (except for $H^1$ microrotation error). The adaptive method achieves similar accuracy but uses fewer degrees of freedom than the uniform one. As the considered example, the uniform method has 1815 degrees of freedom, and the adaptive method has only 1242. Thus, the proposed multiscale approaches achieve quite accurate approximation of the reference solution for Cosserat composite media with both small and large strain-limiting parameters.

\subsection{Heterogeneous media}\label{heterom}


\begin{figure}[!htbp]
    \centering
    \centering
    \includegraphics[width=0.7\textwidth]{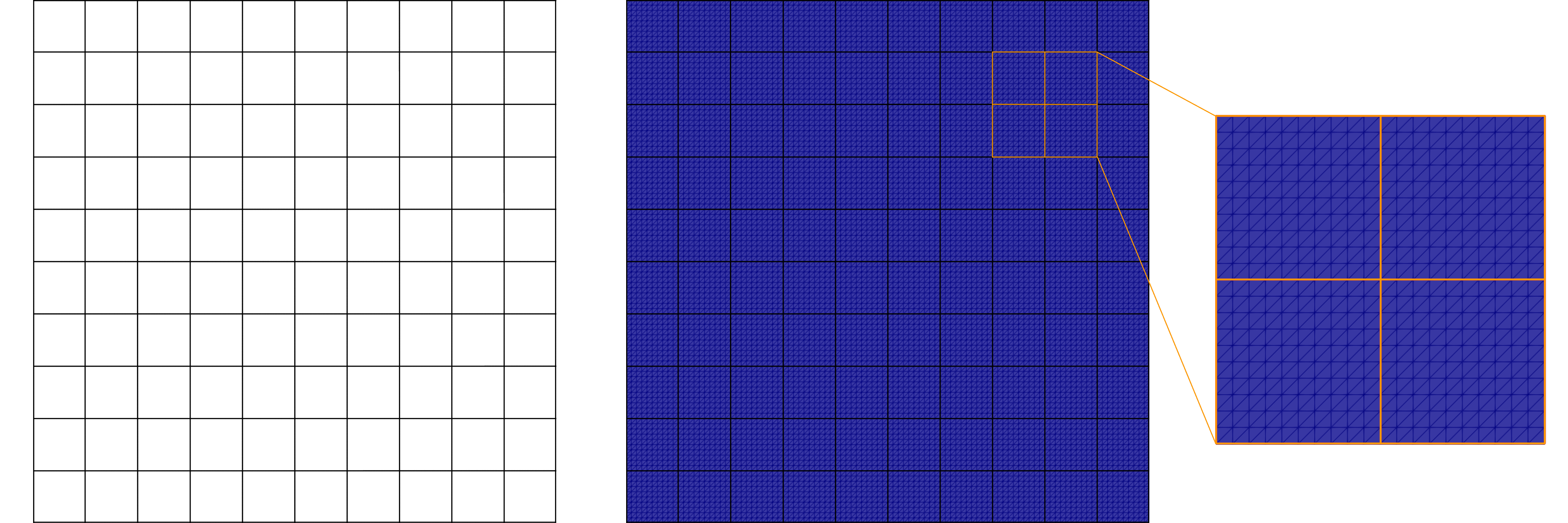}
    \caption{Computational grids for the heterogeneous media: left -- the coarse grid (each small square is a coarse block), middle -- the fine grid, right -- a local domain.}
    \label{fig:h_meshes}
\end{figure}

Finally, we consider heterogeneous Cosserat media with small and large strain-limiting parameters. This is a more complicated media type compared to perforated and composite ones. 
A uniform fine grid is used with 20,000 triangular cells and 10,201 vertices for all problems (see Figure \ref{fig:h_meshes}).


\begin{figure}[!htbp]
    \centering
    \begin{subfigure}[b]{\textwidth}
        \centering
        \includegraphics[width=0.9\textwidth]{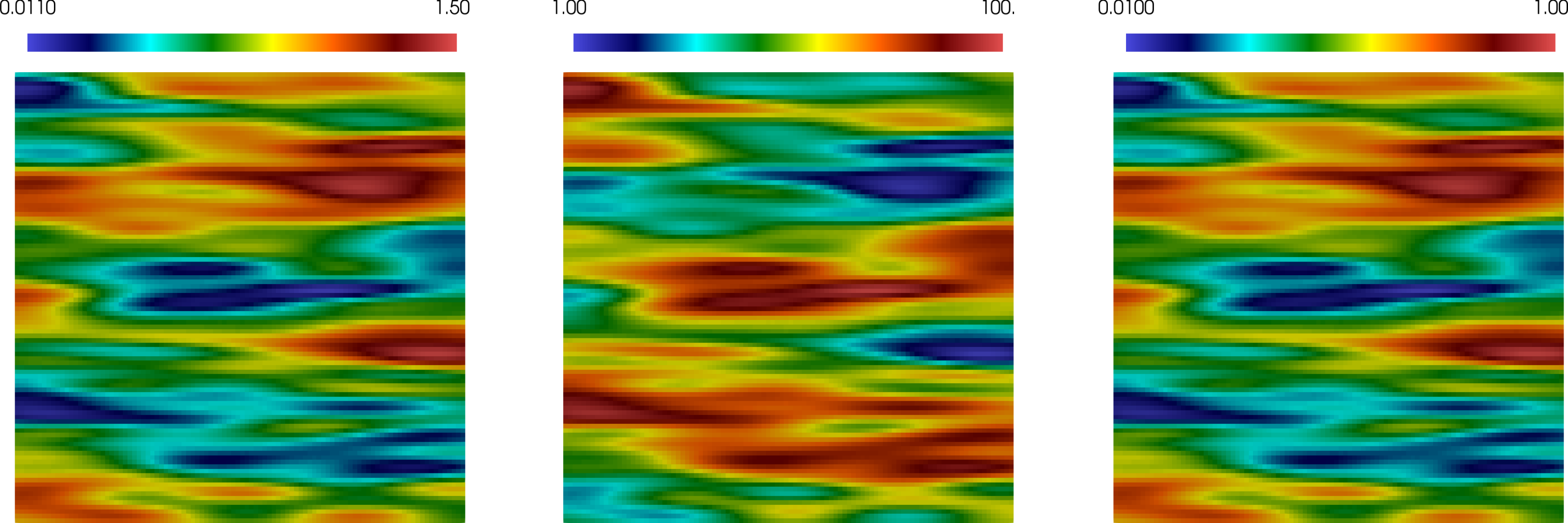}
        \caption{Case 1 (small strain-limiting parameter).}
        \label{fig:h_coefficients_1}
    \end{subfigure}
    \begin{subfigure}[b]{\textwidth}
        \centering
        \includegraphics[width=0.9\textwidth]{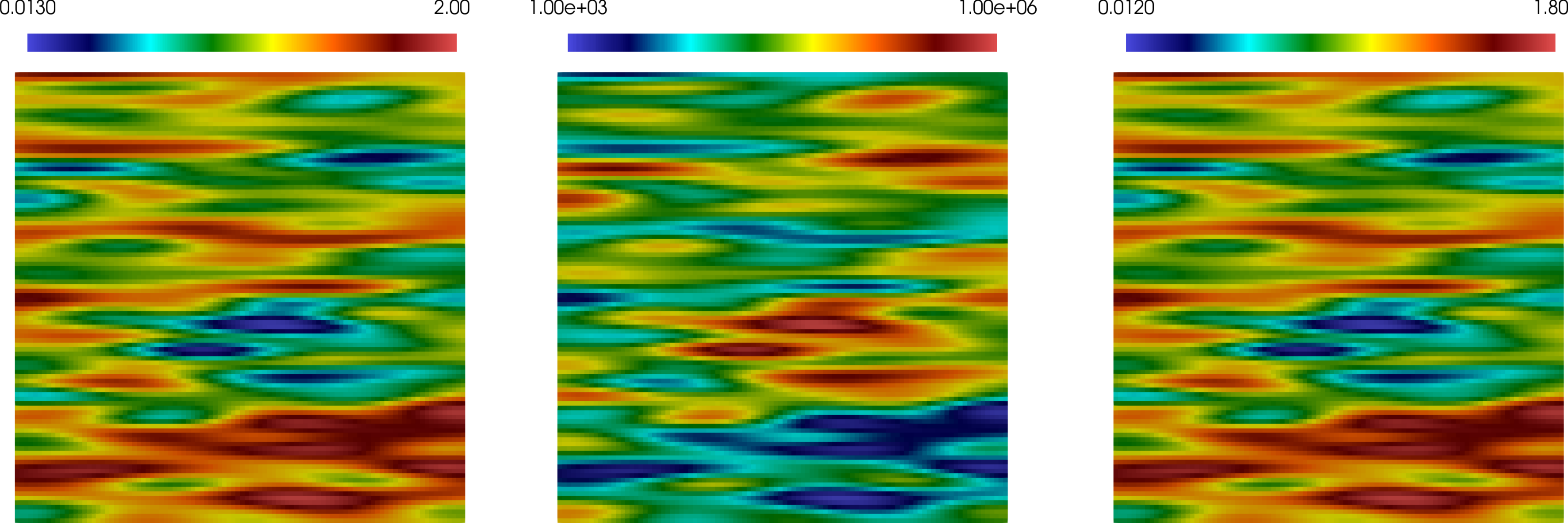}
        \caption{Case 2 (big strain-limiting parameter).}
        \label{fig:h_coefficients_2}
    \end{subfigure}
    \caption{Distributions of $\alpha$, $\beta$, and $\xi$ (from left to right).}
    \label{fig:h_coefficients}
\end{figure}

Figure \ref{fig:h_coefficients} presents the distributions of the heterogeneous coefficients $\alpha$, $\beta$, and $\xi$ (from left to right). In Figure \ref{fig:h_coefficients_1}, we plot the coefficients for Case 1 (small strain-limiting parameter). Meanwhile, Figure \ref{fig:h_coefficients_2} presents the coefficients for Case 2 (large strain-limiting parameter). Regarding the right-hand sides, we have the following settings. For Case 1, set $f_v = 2.2 \cdot 10^{-3}$ and $g_v = 2.3 \cdot 10^{-3}$. Case 2 has $f_v = 5.35 \cdot 10^{-7}$ and $g_v = 5.4 \cdot 10^{-7}$.


\begin{figure}[!htbp]
    \centering
    \begin{subfigure}[b]{\textwidth}
        \centering
        \includegraphics[width=\textwidth]{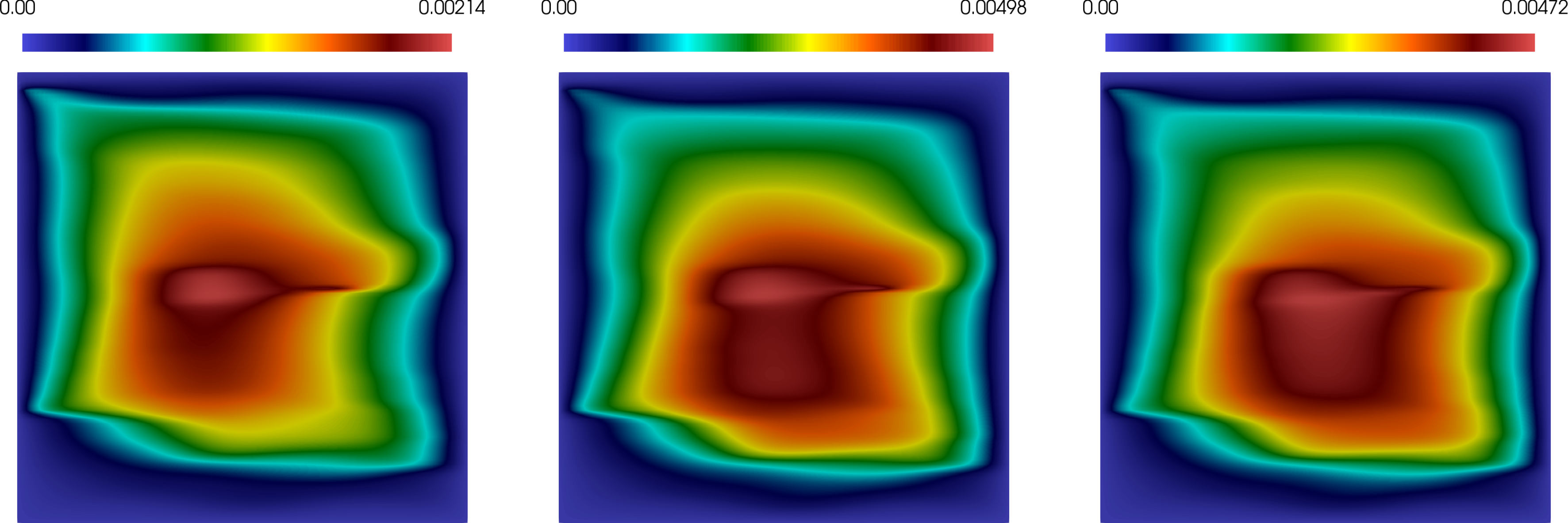}
        \caption{Reference solution.}
        \label{fig:h_results_fine}
    \end{subfigure}
    \begin{subfigure}[b]{\textwidth}
        \centering
        \includegraphics[width=\textwidth]{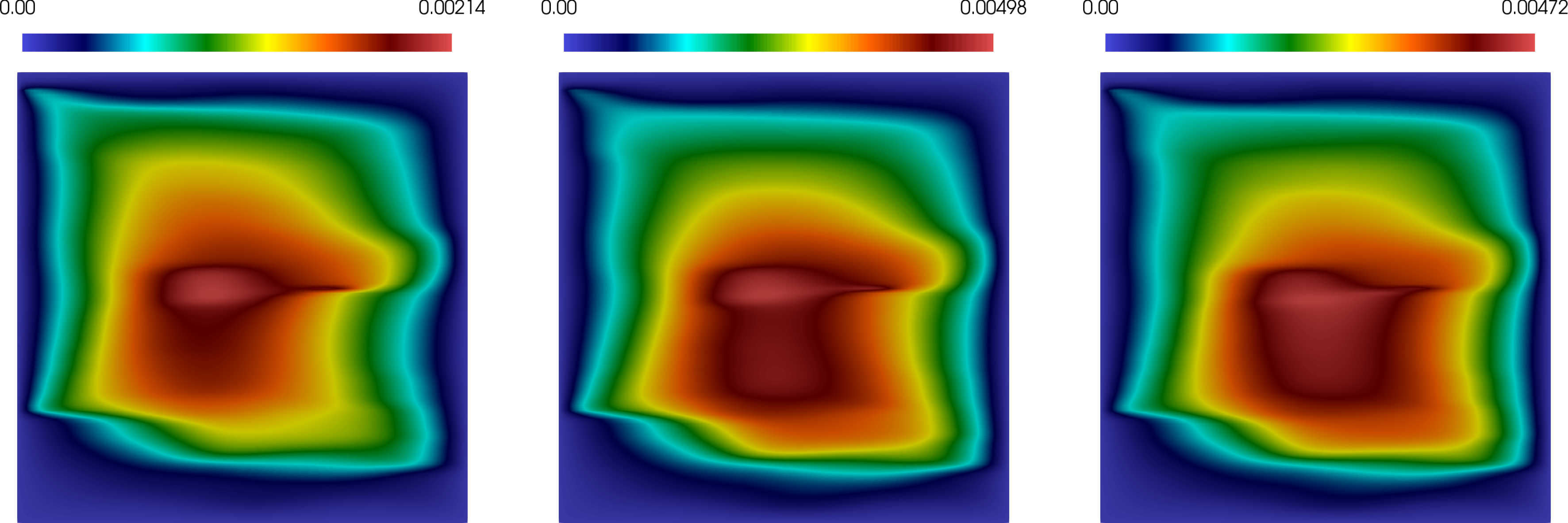}
        \caption{Online adaptive multiscale solution with $N_{\tu{b}} = 4$, $N_{\tu{it}} = 3$, $\theta = 0.8$, and $\delta = 0.1$\,.}
        \label{fig:h_results_online}
    \end{subfigure}
    \begin{subfigure}[b]{\textwidth}
        \centering
        \includegraphics[width=\textwidth]{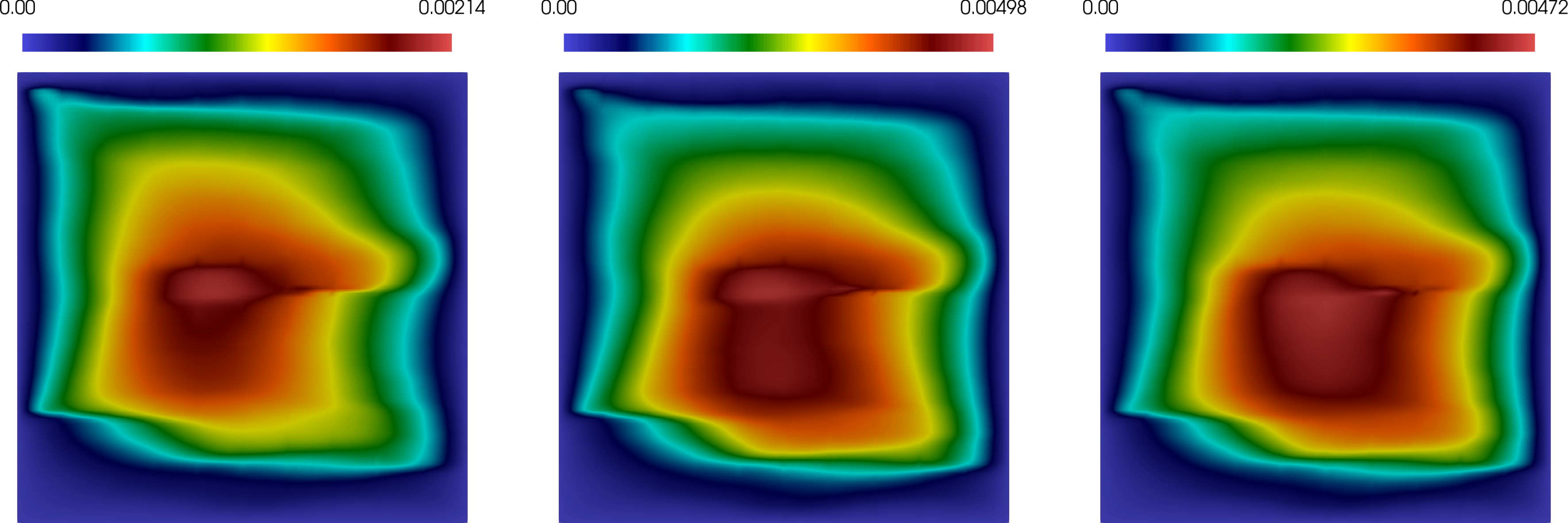}
        \caption{Offline multiscale solution with $N_{\tu{b}} = 4$\,.}
        \label{fig:h_results_offline}
    \end{subfigure}
    \caption{Distributions of the microrotations $\Phi$ and displacements $u_1$ and $u_2$ (from left to right) for Case 1 (small strain-limiting parameter) of the heterogeneous media.}
    \label{fig:h_results}
\end{figure}

In Figure \ref{fig:h_results}, we show the numerical solution to the model problem for the Case 1 of heterogeneous Cosserat media. The distributions of the microrotation $\Phi$ and displacements $u_1$ and $u_2$ are depicted from left to right. Figure \ref{fig:h_results_fine} presents the reference solution on a fine grid. In Figure \ref{fig:h_results_online}, we show the online adaptive solution with $N_{\tu{b}} = 4$, $N_{\tu{it}} = 3$, $\theta = 0.8$, and $\delta = 0.1$. Figure \ref{fig:h_results_offline} demonstrates the offline multiscale solution with $N_{\tu{b}} = 4$. One can see that all figures are very similar, indicating that our proposed multiscale approaches can approximate the reference solution with high accuracy. However, one can observe slight differences between the reference solution and the offline solution.

Also, it is apparent from Figure \ref{fig:h_results} that the distributions of the solution fields reflect heterogeneities in the coefficients $\xi$, $\alpha$, and $\beta$. One can see that the microrotation and displacements are fixed at the boundaries of the domain following our boundary conditions. The most significant values of the solution fields appear in the central part of the computational domain. In general, the obtained numerical solutions correspond to the modeled procedure.



\begin{table}[!htbp]
\caption{Relative errors of the offline method for the case 1 of the heterogeneous media.}
\label{tab:offline_errors_h_case_1}
\begin{center}
\begin{tabular}{ | c | c | c c | c c |}
\hline
\multirowcell{2}{$N_{\tu{b}}$}
& \multirowcell{2}{$\tu{DOF}_{\tu{H}}$}
& \multicolumn{2}{c|}{Displacement}
& \multicolumn{2}{c|}{Microrotation}
\\
\cline{3-6}
&
& $e^{u}_{L^2}$
& $e^{u}_{H^1}$
& $e^{\Phi}_{L^2}$ 
& $e^{\Phi}_{H^1}$ 
\\
\hline
 1   & 363 & 5.419690e-02 & 2.173160e-01 & 6.611790e-02 & 2.273770e-01\\
 2   & 726 & 3.264460e-02 & 1.747270e-01 & 3.368040e-02 & 1.798890e-01\\
 4   & 1452 & 1.116400e-02 & 6.791530e-02 & 1.334260e-02 & 7.371510e-02\\
 8   & 2904 & 5.168200e-03 & 3.711260e-02 & 6.323110e-03 & 4.054100e-02\\
\hline
\end{tabular}
\end{center}
\end{table}

Let us consider the relative errors of the proposed multiscale approaches for the Case 1 of heterogeneous media. Table \ref{tab:offline_errors_h_case_1} presents the errors for the offline multiscale method. The errors are evidently minor. For example, using eight offline basis functions, we can achieve $L^2$ errors less than 1\% and $H^1$ errors less than 5\% for all solution fields. It is clear that as the number of offline basis functions per coarse-grid node increases, the errors decrease. This phenomenon indicates convergence with respect to the number of basis functions.

\begin{table}[!htbp]
\caption{Relative errors of the online method with $\theta = 1$ for the case 1 of the heterogeneous media.}
\label{tab:online_errors_h_case_1_theta_1}
\begin{center}
\begin{tabular}{ | c | c | c c | c c | }
\hline
\multirowcell{2}{$N_{\tu{b}} + N_{\tu{it}}$}
& \multirowcell{2}{$\tu{DOF}_{\tu{H}}$}
& \multicolumn{2}{c|}{Displacement}
& \multicolumn{2}{c|}{Microrotation}
\\
\cline{3-6}
&
& $e^{u}_{L^2}$
& $e^{u}_{H^1}$
& $e^{\Phi}_{L^2}$ 
& $e^{\Phi}_{H^1}$ 
\\
\hline
\multicolumn{6}{|c|}{$\delta = \infty$}
\\
\hline
 1 + 3 & 1452 & 4.383800e-03 & 3.522840e-02 & 4.866030e-03 & 3.699580e-02\\
 2 + 3 & 1815 & 3.891300e-03 & 3.288900e-02 & 4.243790e-03 & 3.565550e-02\\
 4 + 3 & 2541 & 2.940850e-03 & 2.847530e-02 & 3.311570e-03 & 3.198390e-02\\
 8 + 3 & 3993 & 1.836940e-03 & 2.252670e-02 & 1.916790e-03 & 2.366840e-02\\
 1 + 4 & 1815 & 4.120080e-03 & 3.424830e-02 & 4.385240e-03 & 3.557660e-02\\
\hline
\multicolumn{6}{|c|}{$\delta = 0.4$}
\\
\hline
 1 + 3 & 1452 & 4.440450e-03 & 3.931560e-02 & 3.818960e-03 & 3.693580e-02\\
 2 + 3 & 1815 & 2.309330e-03 & 3.109220e-02 & 2.257410e-03 & 3.126000e-02\\
 4 + 3 & 2541 & 9.869050e-04 & 1.491160e-02 & 1.039230e-03 & 1.588080e-02\\
 8 + 3 & 3993 & 4.411120e-04 & 7.751330e-03 & 4.776120e-04 & 8.655500e-03\\
 1 + 4 & 1815 & 3.213520e-03 & 3.375880e-02 & 2.782390e-03 & 3.152320e-02\\
\hline
\multicolumn{6}{|c|}{$\delta = 0.2$}
\\
\hline
 1 + 3 & 1452 & 3.645930e-03 & 3.110550e-02 & 3.748050e-03 & 3.144440e-02\\
 2 + 3 & 1815 & 2.785150e-03 & 2.761440e-02 & 2.789900e-03 & 3.012630e-02\\
 4 + 3 & 2541 & 2.243970e-03 & 2.459300e-02 & 2.446930e-03 & 2.786640e-02\\
 8 + 3 & 3993 & 1.313930e-03 & 1.859830e-02 & 1.320570e-03 & 2.003710e-02\\
 1 + 4 & 1815 & 3.435690e-03 & 3.030630e-02 & 3.467350e-03 & 3.079680e-02\\
\hline
\multicolumn{6}{|c|}{$\delta = 0.1$}
\\
\hline
 1 + 3 & 1452 & 2.025480e-03 & 2.076980e-02 & 1.554510e-03 & 1.943370e-02\\
 2 + 3 & 1815 & 1.043430e-03 & 1.619310e-02 & 1.032300e-03 & 1.684330e-02\\
 4 + 3 & 2541 & 5.063580e-04 & 9.255770e-03 & 5.355140e-04 & 1.039420e-02\\
 8 + 3 & 3993 & 2.705430e-04 & 5.974960e-03 & 2.946030e-04 & 6.856720e-03\\
 1 + 4 & 1815 & 1.517970e-03 & 1.790020e-02 & 1.286640e-03 & 1.715280e-02\\
\hline
\end{tabular}
\end{center}
\end{table}

Next, let us consider the relative errors of the online uniform method ($\theta=1$) for the Case 1 of heterogeneous media, listed in Table \ref{tab:online_errors_h_case_1_theta_1}. The application of online basis functions has significantly improved the accuracy of the multiscale solution. For example, we can achieve $L^2$ errors of less than 1\% and $H^1$ errors of 4\% for all fields using one offline and three online basis functions with $\delta=\infty$. A more significant reduction in $H^1$ error requires more offline basis functions and frequent updates. One can observe that, on average, the errors decrease as the offline basis functions number increases and $\delta$ decreases.

\begin{table}[!htbp]
\caption{Relative errors of the online method with $\theta = 0.8$ for the case 1 of the heterogeneous media.}
\label{tab:online_errors_h_case_1_theta_0.8}
\begin{center}
\begin{tabular}{ | c | c | c c | c c | }
\hline
\multirowcell{2}{$N_{\tu{b}} + N_{\tu{it}}$}
& \multirowcell{2}{$\tu{DOF}_{\tu{H}}$}
& \multicolumn{2}{c|}{Displacement}
& \multicolumn{2}{c|}{Microrotation}
\\
\cline{3-6}
&
& $e^{u}_{L^2}$
& $e^{u}_{H^1}$
& $e^{\Phi}_{L^2}$ 
& $e^{\Phi}_{H^1}$ 
\\
\hline
\multicolumn{6}{|c|}{$\delta = \infty$}
\\
\hline
 1 + 3 & 924 & 5.673020e-03 & 3.962000e-02 & 6.434470e-03 & 4.145830e-02\\
 2 + 3 & 1305 & 4.364190e-03 & 3.569630e-02 & 4.860200e-03 & 3.840520e-02\\
 4 + 3 & 1995 & 3.578610e-03 & 3.184730e-02 & 3.936960e-03 & 3.451640e-02\\
 8 + 3 & 3477 & 2.006330e-03 & 2.378650e-02 & 2.107570e-03 & 2.546960e-02\\
 1 + 4 & 1092 & 4.731540e-03 & 3.708350e-02 & 5.170640e-03 & 3.866460e-02\\
\hline
\multicolumn{6}{|c|}{$\delta = 0.4$}
\\
\hline
 1 + 3 & 924 & 7.419340e-03 & 5.283190e-02 & 6.778140e-03 & 5.395420e-02\\
 2 + 3 & 1305 & 3.662770e-03 & 4.008400e-02 & 3.360560e-03 & 4.038750e-02\\
 4 + 3 & 1995 & 1.341400e-03 & 1.808080e-02 & 1.357920e-03 & 1.887000e-02\\
 8 + 3 & 3477 & 4.921400e-04 & 8.743900e-03 & 5.444320e-04 & 9.740430e-03\\
 1 + 4 & 1092 & 6.384480e-03 & 4.942240e-02 & 5.640400e-03 & 5.021520e-02\\
\hline
\multicolumn{6}{|c|}{$\delta = 0.2$}
\\
\hline
 1 + 3 & 924 & 4.101040e-03 & 3.419680e-02 & 4.245310e-03 & 3.423250e-02\\
 2 + 3 & 1305 & 3.077750e-03 & 2.967090e-02 & 3.195300e-03 & 3.229150e-02\\
 4 + 3 & 2001 & 3.029550e-03 & 2.886550e-02 & 3.183790e-03 & 3.084220e-02\\
 8 + 3 & 3477 & 1.438160e-03 & 1.941150e-02 & 1.459980e-03 & 2.110530e-02\\
 1 + 4 & 1092 & 3.863460e-03 & 3.261860e-02 & 3.825900e-03 & 3.242170e-02\\
\hline
\multicolumn{6}{|c|}{$\delta = 0.1$}
\\
\hline
 1 + 3 & 933 & 2.758910e-03 & 2.463040e-02 & 2.513950e-03 & 2.478490e-02\\
 2 + 3 & 1305 & 1.557010e-03 & 1.973010e-02 & 1.533950e-03 & 1.984750e-02\\
 4 + 3 & 2001 & 6.361230e-04 & 1.073310e-02 & 6.772690e-04 & 1.216150e-02\\
 8 + 3 & 3477 & 3.327400e-04 & 6.678640e-03 & 3.408500e-04 & 7.354840e-03\\
 1 + 4 & 1095 & 2.612870e-03 & 2.286920e-02 & 2.320680e-03 & 2.220310e-02\\
\hline
\end{tabular}
\end{center}
\end{table}

Table \ref{tab:online_errors_h_case_1_theta_0.8} describes the errors of the online adaptive multiscale method ($\theta=0.8$) for the Case 1 of heterogeneous media. We see that the adaptive method achieves comparable accuracy to the uniform method, for example, with $N_{\tu{b}}=1$, $N_{\tu{it}}=3$, and $\delta=\infty\,.$ 
However, the adaptive method requires fewer degrees of freedom ($\tu{DOF}_{\tu{H}} = 924$) than the uniform method ($\tu{DOF}_{\tu{H}} = 1452$).


\begin{table}[!htbp]
\caption{Relative errors of the offline method for the case 2 of the heterogeneous media.}
\label{tab:offline_errors_h_case_2}
\begin{center}
\begin{tabular}{ | c | c | c c | c c |}
\hline
\multirowcell{2}{$N_{\tu{b}}$}
& \multirowcell{2}{$\tu{DOF}_{\tu{H}}$}
& \multicolumn{2}{c|}{Displacement}
& \multicolumn{2}{c|}{Microrotation}
\\
\cline{3-6}
&
& $e^{u}_{L^2}$
& $e^{u}_{H^1}$
& $e^{\Phi}_{L^2}$ 
& $e^{\Phi}_{H^1}$ 
\\
\hline
 1   & 363 & 6.919310e-02 & 2.303840e-01 & 7.408850e-02 & 2.337440e-01\\
 2   & 726 & 4.170070e-02 & 1.803740e-01 & 4.924700e-02 & 1.865870e-01\\
 4   & 1452 & 3.462540e-02 & 1.057580e-01 & 4.076340e-02 & 1.074750e-01\\
 8   & 2904 & 2.512890e-02 & 6.930720e-02 & 3.071440e-02 & 7.207860e-02\\
\hline
\end{tabular}
\end{center}
\end{table}

We briefly consider the Case 2 of heterogeneous media. Table \ref{tab:offline_errors_h_case_2} shows the relative errors for the offline multiscale method. In general, the errors are minor, but they are somehow larger than for Case 1 (Table \ref{tab:offline_errors_h_case_1}). One can explain this phenomenon by the more significant nonlinearity of the problem due to the big strain-limiting parameter.

\begin{table}[!htbp]
\caption{Relative errors of the online method with $\theta = 1$ for the case 2 of the heterogeneous media.}
\label{tab:online_errors_h_case_2_theta_1}
\begin{center}
\begin{tabular}{ | c | c | c c | c c | }
\hline
\multirowcell{2}{$N_{\tu{b}} + N_{\tu{it}}$}
& \multirowcell{2}{$\tu{DOF}_{\tu{H}}$}
& \multicolumn{2}{c|}{Displacement}
& \multicolumn{2}{c|}{Microrotation}
\\
\cline{3-6}
&
& $e^{u}_{L^2}$
& $e^{u}_{H^1}$
& $e^{\Phi}_{L^2}$ 
& $e^{\Phi}_{H^1}$ 
\\
\hline
\multicolumn{6}{|c|}{$\delta = \infty$}
\\
\hline
 1 + 3 & 1452 & 1.304680e-02 & 8.310370e-02 & 1.356710e-02 & 7.892670e-02\\
 2 + 3 & 1815 & 1.270610e-02 & 8.345240e-02 & 1.293110e-02 & 7.801780e-02\\
 4 + 3 & 2541 & 8.771970e-03 & 6.713950e-02 & 9.300600e-03 & 6.501380e-02\\
 8 + 3 & 3993 & 6.115750e-03 & 5.141410e-02 & 6.915440e-03 & 5.079360e-02\\
 1 + 4 & 1815 & 1.199220e-02 & 8.040890e-02 & 1.326950e-02 & 7.540530e-02\\
\hline
\multicolumn{6}{|c|}{$\delta = 0.4$}
\\
\hline
 1 + 3 & 1452 & 1.240920e-02 & 2.147460e-02 & 1.538920e-02 & 2.537460e-02\\
 2 + 3 & 1815 & 1.232700e-02 & 2.107880e-02 & 1.505190e-02 & 2.488050e-02\\
 4 + 3 & 2541 & 1.062390e-02 & 1.944770e-02 & 1.273210e-02 & 2.330430e-02\\
 8 + 3 & 3993 & 7.977310e-03 & 1.544880e-02 & 1.045970e-02 & 1.868190e-02\\
 1 + 4 & 1815 & 1.179330e-02 & 2.045310e-02 & 1.472660e-02 & 2.416850e-02\\
\hline
\multicolumn{6}{|c|}{$\delta = 0.2$}
\\
\hline
 1 + 3 & 1452 & 3.542250e-03 & 1.492340e-02 & 4.053120e-03 & 1.812700e-02\\
 2 + 3 & 1815 & 3.303210e-03 & 1.604530e-02 & 4.135830e-03 & 1.983060e-02\\
 4 + 3 & 2541 & 1.617330e-03 & 1.260760e-02 & 2.078650e-03 & 1.520880e-02\\
 8 + 3 & 3993 & 9.925260e-04 & 8.658330e-03 & 1.299360e-03 & 1.106480e-02\\
 1 + 4 & 1815 & 3.385520e-03 & 1.388360e-02 & 4.076770e-03 & 1.715350e-02\\
\hline
\multicolumn{6}{|c|}{$\delta = 0.1$}
\\
\hline
 1 + 3 & 1452 & 5.204710e-03 & 1.134430e-02 & 6.150940e-03 & 1.400520e-02\\
 2 + 3 & 1815 & 5.158760e-03 & 1.162310e-02 & 6.059750e-03 & 1.430720e-02\\
 4 + 3 & 2541 & 3.127480e-03 & 1.061640e-02 & 3.463800e-03 & 1.257570e-02\\
 8 + 3 & 3993 & 2.033680e-03 & 6.791850e-03 & 2.563690e-03 & 8.353520e-03\\
 1 + 4 & 1815 & 5.022030e-03 & 1.055400e-02 & 6.146010e-03 & 1.332760e-02\\
\hline
\end{tabular}
\end{center}
\end{table}

Table \ref{tab:online_errors_h_case_2_theta_1} presents the errors of the online uniform multiscale method ($\theta = 1$) for the Case 2 of heterogeneous media. Obviously, the online enrichment of the multiscale space significantly reduces the errors of the numerical solution, in comparison to the offline method. For example, using one offline and three online basis functions with $\delta = 0.1$, one can achieve $L^2$ errors less than 1\% and $H^1$ errors less than 2\% for all solution fields. We also observe a decrease in the errors when the number of offline basis functions increases and $\delta$ decreases.

\begin{table}[!htbp]
\caption{Relative errors of the online method with $\theta = 0.8$ for the case 2 of the heterogeneous media.}
\label{tab:online_errors_h_case_2_theta_0.8}
\begin{center}
\begin{tabular}{ | c | c | c c | c c | }
\hline
\multirowcell{2}{$N_{\tu{b}} + N_{\tu{it}}$}
& \multirowcell{2}{$\tu{DOF}_{\tu{H}}$}
& \multicolumn{2}{c|}{Displacement}
& \multicolumn{2}{c|}{Microrotation}
\\
\cline{3-6}
&
& $e^{u}_{L^2}$
& $e^{u}_{H^1}$
& $e^{\Phi}_{L^2}$ 
& $e^{\Phi}_{H^1}$ 
\\
\hline
\multicolumn{6}{|c|}{$\delta = \infty$}
\\
\hline
 1 + 3 & 903 & 1.678830e-02 & 9.209500e-02 & 1.753610e-02 & 8.716010e-02\\
 2 + 3 & 1311 & 1.421190e-02 & 8.963580e-02 & 1.454010e-02 & 8.488830e-02\\
 4 + 3 & 1875 & 1.087310e-02 & 7.408720e-02 & 1.159350e-02 & 7.138570e-02\\
 8 + 3 & 3246 & 7.131570e-03 & 5.584860e-02 & 7.758890e-03 & 5.521970e-02\\
 1 + 4 & 1068 & 1.489030e-02 & 8.842950e-02 & 1.584340e-02 & 8.389720e-02\\
\hline
\multicolumn{6}{|c|}{$\delta = 0.4$}
\\
\hline
 1 + 3 & 906 & 1.288540e-02 & 2.498310e-02 & 1.596110e-02 & 2.918940e-02\\
 2 + 3 & 1311 & 1.273660e-02 & 2.411180e-02 & 1.548070e-02 & 2.790280e-02\\
 4 + 3 & 2013 & 1.055560e-02 & 2.056260e-02 & 1.308720e-02 & 2.473470e-02\\
 8 + 3 & 3459 & 8.095620e-03 & 1.639740e-02 & 1.052910e-02 & 2.020620e-02\\
 1 + 4 & 1068 & 1.270700e-02 & 2.324870e-02 & 1.579960e-02 & 2.757000e-02\\
\hline
\multicolumn{6}{|c|}{$\delta = 0.2$}
\\
\hline
 1 + 3 & 906 & 4.121940e-03 & 2.042730e-02 & 4.498640e-03 & 2.375570e-02\\
 2 + 3 & 1311 & 3.664080e-03 & 2.017230e-02 & 4.228240e-03 & 2.282930e-02\\
 4 + 3 & 2016 & 1.821150e-03 & 1.400640e-02 & 2.311220e-03 & 1.713640e-02\\
 8 + 3 & 3459 & 1.061690e-03 & 9.518380e-03 & 1.364670e-03 & 1.192980e-02\\
 1 + 4 & 1068 & 3.697350e-03 & 1.725500e-02 & 4.298040e-03 & 2.090630e-02\\
\hline
\multicolumn{6}{|c|}{$\delta = 0.1$}
\\
\hline
 1 + 3 & 906 & 5.639990e-03 & 1.678090e-02 & 6.509760e-03 & 1.897330e-02\\
 2 + 3 & 1311 & 5.214250e-03 & 1.536110e-02 & 6.051070e-03 & 1.823880e-02\\
 4 + 3 & 2016 & 2.968060e-03 & 1.212170e-02 & 3.551270e-03 & 1.457640e-02\\
 8 + 3 & 3459 & 2.036360e-03 & 7.914200e-03 & 2.580780e-03 & 1.030360e-02\\
 1 + 4 & 1068 & 5.217510e-03 & 1.262830e-02 & 6.209310e-03 & 1.583760e-02\\
\hline
\end{tabular}
\end{center}
\end{table}

Finally, in Table \ref{tab:online_errors_h_case_2_theta_0.8}, we present the errors of the online adaptive multiscale method ($\theta = 0.8$) for the Case 2 of heterogeneous Cosserat media. It is observable that the adaptive approach yields errors comparable to the uniform one. However, the adaptive approach has fewer degrees of freedom. For example, with the settings discussed above ($N_{\tu{b}} = 1$, $N_{\tu{it}} = 3$, and $\delta = 0.1$), the uniform method has $\tu{DOF}_{\tu{H}} = 1452$, while the adaptive method has $\tu{DOF}_{\tu{H}} = 906$. Thus, the proposed multiscale approaches can be successfully applied to complex heterogeneous Cosserat media with small and large strain-limiting parameters.

\begin{remark}\label{rmk_constraints}
In the presented results, the source terms and boundary conditions were numerically chosen to satisfy 
$Q(\bfa{\chi},\bfa{\gamma}) < 1$ \eqref{less1}. In general, it would be interesting to solve the optimization problem with the constraint $Q(\bfa{\chi},\bfa{\gamma}) < 1\,.$ We plan to do this in the future. However, in this work, the primary goals are to develop a strain-limiting Cosserat elasticity model and multiscale approaches based on the GMsFEM for efficient simulation.
\end{remark}

\begin{remark}\label{rmk_dualnorm}
The computation of the dual norm plays an important role in the online adaptive algorithm. In Section \ref{dualnorm}, we described two approaches for computing this norm. The numerical results were obtained using the second approach because it showed better results in preliminary experiments. This is because such a dual norm can better account for various features of the nonlinear Cosserat model.
\end{remark}

\section{Conclusion}\label{conclusion}

In this paper, we have explored some multiscale methods to tackle a nonlinear Cosserat elasticity model in media with high contrast and heterogeneities. More specifically, we investigate the application of generalized multiscale finite element method (GMsFEM) to the solving of an isotropic Cosserat problem that has strain-limiting property (which guarantees confined linearized strains even in the presence of extremely large stresses).  
Mathematically, the model forms a system of nonlinear partial differential equations for the Cosserat's microrotations and displacements. The system's nonlinearity is handled by Picard iteration.  At each Picard iterative step, we employ the offline or residual-based online (adaptive or uniform) GMsFEM.

Numerically, we have presented finite element approximation on a fine computational grid. The numerical tests are carried out through a variety of two-dimensional cases (perforated, composite, and stochastically heterogeneous media, with small and large strain-limiting parameters).  Our numerical results demonstrate the robustness, convergence, and efficiency of the techniques that can approximate the reference solution with high accuracy. 
  Furthermore, the online GMsFEM provides more accurate solutions than the offline one, and the online adaptive strategy has similar accuracy to the uniform one but with fewer degrees of freedom. 



The proposed nonlinear Cosserat model and multiscale approaches can be used for efficiently modeling heterogeneous materials with rotational degrees of freedom. Moreover, the strain-limiting theory allows us to consider intense loads.


\newpage

\noindent \textbf{Acknowledgements.} 

The research of Dmitry Ammosov was funded by the Russian Science Foundation Grant No. 23-71-30013 (\href{https://rscf.ru/en/project/23-71-30013/}{https://rscf.ru/en/project/23-71-30013/}) and the Grant of the Head of the Republic of Sakha (Yakutia), Russia, agreement from 21.04.2023 No. 02/928.

Tina Mai expresses her gratitude to all the valuable support from Duy Tan University, who is going to celebrate its 30th anniversary of establishment (Nov. 11, 1994 -- Nov. 11, 2024) towards ``Integral, Sustainable and Stable Development''. 

Juan Galvis thanks the MATHDATA - AUIP Network ({Red Iberoamericana de Investigaci\'on en Matem\'aticas Aplicadas a Datos} \href{https://www.mathdata.science/}{https://www.mathdata.science/}), the Center for Excellence in Scientific Computing (Centro de Excelencia en Computaci\'on Cient\'ifica) of the Universidad Nacional de Colombia.

We express sincere gratitude to Professors Kumbakonam Rajagopal and Jay Walton for their invaluable guidance on the strain-limiting Cosserat model and their encouraging comments.

\bibliographystyle{plain} 
\bibliography{r0,r1,r2}

\end{document}